\theoremstyle{plain}
\newtheorem{theorem}{Theorem}
\newtheorem*{nonum-theorem}{Theorem}
\newtheorem{proposition}{Proposition}
\newtheorem{lemma}{Lemma}
\newtheorem{lemma-remark}{Lemma-remark}
\newtheorem{corollary}{Corollary}
\newtheorem{nonum-corollary}{Corollary}
\newtheorem{context}{Context}
\newtheoremstyle{handleNumber}{}{}{\itshape}{}{}{}{\newline}{{\bf #1} \thmnote{#3}}
\theoremstyle{handleNumber}
\newtheorem*{handnum-theorem}{Theorem}
\theoremstyle{definition}
\newtheorem{definition}{Definition}
\newtheorem*{nonum-definition}{Definition}
\theoremstyle{remark}
\newtheorem{remark}{Remark}
\renewcommand{\leq}{\leqslant}
\renewcommand{\geq}{\geqslant}
\newcommand{\A}{\mathbb A}
\newcommand{\PP}{\mathbb P}
\newcommand{\cO}{\mathcal O}
\newcommand{\cI}{\mathcal I}
\newcommand{\GlS}[2]{\Gamma( \PP^{#1},\cO(#2) ) }
\newcommand{\GlSX}[3]{\Gamma( \PP^{#1}_{#2},\cO(#3) ) }
\newcommand{\Fraff}{Fr^{qaf}}
\newcommand{\Fragc}{Fr^{agc}}
\newcommand{\cFraff}{\mathcal Fr^{qaf}}
\newcommand{\cFrsaf}{\mathcal Fr^{saf}}
\newcommand{\wT}[1]{\wedge T^{#1}}
\newcommand{\cFs}{\mathcal F}
\newcommand{\cEs}{\mathcal E}
\newcommand{\SH}{\mathbf{SH}}
\newcommand{\SHd}{\mathbf{SH_\bullet}}
\newcommand{\Hd}{\mathbf{H_\bullet}}
\newcommand{\Spec}{\operatorname{Spec}}
\newcommand{\Ker}{\operatorname{Ker}}
\newcommand{\Coker}{\operatorname{Coker}}
\newcommand{\Image}{\operatorname{Image}}
\newcommand{\der}{d}
\newcommand{\rank}{\operatorname{rank}}
\newcommand{\Supp}{\operatorname{Supp}}
\newcommand{\bfs}{\mathbf s}
\newcommand{\bfe}{\mathbf e}
\newcommand{\bfu}{\mathbf u}
\newcommand{\bfw}{\mathbf w}
\newcommand{\bfom}{\mathbf \ome }
\newcommand{\bfb}{\mathbf b}
\newcommand{\bfc}{\mathbf c}
\newcommand{\Sch}{\mathrm{Sch}}
\newcommand{\Sm}{\mathrm{Sm}}
\newcommand{\Sh}{\mathrm{Sh}}
\newcommand{\Gm}{\mathbb G_m}
\newcommand{\bfa}{\mathbf{a}}\newcommand{\ome}{v}
\newcommand{\oveb}{{\overline{b}}}
\newcommand{\ovew}{{\overline{w}}}
\newcommand{\ovec}{{\overline{c}}}
\newcommand{\bfoveb}{{\overline{\bfb}}}
\newcommand{\bfovec}{{\overline{\bfc}}}
\newcommand{\bfovew}{{\overline{\bfw}}}
\newcommand{\bff}{\mathbf{f}}
\newcommand{\bfh}{\mathbf{h}}
\newcommand{\bfz}{\mathbf{z}}
\newcommand{\ovovs}{\overline{\overline{s}}}
\newcommand{\ovovbfs}{\overline{\overline{\bfs}}}
\newcommand{\ovovbfe}{\overline{\overline{\bfe}}}
\newcommand{\ovovbfu}{\overline{\overline{\bfu}}}
\begin{document}

\title{Geometric models for fibrant resolutions of motivic suspension spectra}
\author{Andrei~Druzhinin}
\thanks{The research is supported by \enquote{Native towns}, a social investment program of PJSC \enquote{Gazprom Neft}.}
\label{abs}
\begin{abstract}
We construct geometric models for the
$\PP^1$-spectrum $M_{\PP^1}(Y)$,
which computes in Garkusha-Panin's theory of framed motives \cite{GP14} 
a motivically fibrant $\Omega_{\PP^1}$ replacement of $\Sigma_{\PP^1}^\infty Y$ in positive degrees
for a smooth scheme $Y\in \Sm_k$ over a perfect field $k$. 
Namely, we get the $T$-spectrum in the category of pairs of smooth ind-schemes that defines $\PP^1$-spectrum of pointed sheaves termwise motivically equivalent to $M_{\PP^1}(Y)$.
\end{abstract}

\subjclass{14F42}
\maketitle

\section{Introduction}

\subsection{The models given by T-spectra of smooth ind-pairs.}

Consider the category $\Sm^\textrm{pair}_k$ with objects being 
the pairs $(X,U)$ with $X\in Sm_k$ and an open subscheme $U\subset X$ over a filed $k$, and morphisms being morphisms of pairs.
Let $\mathrm{ind}\text{-}\Sm^\textrm{pair}$ be the category
of 
sequences 
\begin{equation}
(X_1,U_1)\to (X_2,U_2)\to \dots (X_i,U_i)\to \dots \end{equation} of closed embeddings of pairs.
We call such sequences 
by smooth ind-pairs.

Denote by $T$ the pair $(\A^1,\A^1-0)$, and 
for $(X,U)\in \Sm^\textrm{pair}_k$ denote the pair 
$(X,U)\wedge T=(X\times \A^1, U\times\A^1\cup X\times(\A^1-0)$. 
The last definition extends in a natural way to ind-pairs as well.
Then we can consider $T$-spectra of ind-pairs, 
by which we mean the sequences
\begin{equation*}\label{eq:SpecTind-pair}
(R^0,\dots R^l,\dots ), f_i\colon R^{l}\wedge T\to R^{l+1},
\end{equation*}
where the terms $R^l$ are ind-pairs and morphisms $f_i$ are morphisms of ind-pairs.
Denote the category of such sequences  
by $\Spec_T \mathrm{ind}\text{-}\Sm^\textrm{pair}$.

Any smooth pair $(X,U)$ defines the Nisnevich sheaf $X/U$ that is a factor sheaf of the sheaves represented by $X$ and $U$.
Then any ind-pair defined a Nisnevich sheaf, and consequently any $T$-spectrum of ind-pairs defines a $T$-spectrum of Nisnevich sheaves.
Thus sine any Nisnevich sheave can be considered as a motivic space we get the functor  
\begin{equation}\label{eq:SpecTpairSH}
\Spec_T \mathrm{ind}\text{-}\Sm^\textrm{pair} \to \SH(k)
\end{equation}
where by $\SH(k)$ we mean the model for the stable motivic homotopy category given by $T$-spectra of motivic spaces.
\begin{theorem}\label{th:wOmeIndPairModelofSigma}
Let $Y\in \Sm_k$ over a perfect filed $k$. 
Then there are 
a $T$-spectrum $M^\prime_{T}(Y)$ in the category $\mathrm{ind}\text{-}\Sm^\textrm{o-pair}_k$
$$
M^\prime_{T}(Y)=(R^0,\dots R^{l}\dots),
$$
such that  $M^\prime_{T}(Y)\simeq \Sigma^\infty_{T}Y$ in $\SH(k)$, where $M^\prime_{T}(Y)$ is considered as object in $\SH(k)$
via the functor \eqref{eq:SpecTpairSH}, and
$$
R^l(Y) \simeq \mathcal Hom_{\Hd(k)}(T, R^{l+1}(Y)).
$$
The construction is natural on the class of $Y$ with an affine neighbourhood for any finite set of points.
\end{theorem}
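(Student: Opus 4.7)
The plan is to leverage the Garkusha--Panin framed motives construction of $M_{\PP^1}(Y)$ as the engine, and then to descend the whole tower to the category of smooth ind-pairs by exhibiting explicit geometric presentations of each level. First I would recall that each motivic space $M^l_T(Y)$ in Garkusha--Panin's model is built as a filtered colimit of explicit schemes parameterizing framed correspondences, and that such colimits naturally factor as quotients of Nisnevich sheaves represented by smooth schemes by open subsheaves coming from the support/framing locus. Reading this combinatorial description off carefully, one gets for each $l$ a smooth ind-pair $R^l = \mathrm{colim}_\alpha (X^l_\alpha, U^l_\alpha)$ in $\mathrm{ind}\text{-}\Sm^\mathrm{pair}_k$ whose associated Nisnevich sheaf $R^l/U^l$ is, levelwise, the space $M^l_T(Y)$.

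Second I would build the $T$-spectrum structure. The suspension morphisms $R^l \wedge T \to R^{l+1}$ are constructed directly from Garkusha--Panin's bonding maps of $M_{\PP^1}(Y)$, after checking that the smash product with $T = (\A^1, \A^1 - 0)$ defined on $\Sm^\mathrm{pair}_k$ extends along the filtered colimits defining the $R^l$. This produces an object of $\Spec_T \mathrm{ind}\text{-}\Sm^\mathrm{pair}_k$ whose image under \eqref{eq:SpecTpairSH} is the sheaf-level spectrum $M_{\PP^1}(Y)$; the equivalence $M'_T(Y) \simeq \Sigma^\infty_T Y$ in $\SH(k)$ then follows from the main theorem of \cite{GP14}.

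The hard part, which I expect to be the core of the argument, is verifying the $\Omega_T$-property $R^l \simeq \mathcal{H}om_{\Hd(k)}(T, R^{l+1})$ at the unstable motivic level. This is where the positive-degree motivic fibrancy of $M_{\PP^1}(Y)$ enters: for $l \geq 1$ one knows $M^l_T(Y) \simeq \Omega_T M^{l+1}_T(Y)$ in $\SH(k)$, but what is required here is that the ind-pair representative $R^l$ models the internal hom $\mathcal{H}om(T, R^{l+1})$ already in $\Hd(k)$ via the functor \eqref{eq:SpecTpairSH}. I would argue this by cofibrantly replacing $T$ inside $\mathrm{ind}\text{-}\Sm^\mathrm{pair}_k$ and identifying $\mathcal{H}om(T, R^{l+1})$ with an explicit sub-ind-pair of $R^{l+1}$ that coincides, up to motivic equivalence, with the $l$-th level produced by the framed-motives machinery—this is an elaborate but concrete comparison of geometric colimits. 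Naturality on $Y$ admitting an affine neighbourhood of every finite point set is then automatic, since every step of the construction (the framed correspondences, the ind-pair presentation, the bonding maps and the cofibrant replacement) is functorial precisely under that affineness hypothesis, which is what is needed to glue the local étale pieces together.
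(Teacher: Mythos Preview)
Your proposal has the causal structure inverted and contains a genuine gap at the step you treat as routine.

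\textbf{The gap.} You write that Garkusha--Panin's spaces $Fr(Y\wedge T^l)$ are ``built as a filtered colimit of explicit schemes'' and that ``reading this combinatorial description off carefully, one gets for each $l$ a smooth ind-pair.'' This is precisely what does \emph{not} come for free. The sections of $Fr_n(X,Y\wedge T^l)$ involve a choice of \'etale neighbourhood $V\to\A^n_X$ taken up to refinement; there is no evident scheme whose functor of points is $X\mapsto Fr_n(X,Y\wedge T^l)$, let alone a smooth one. The actual work in the paper is to produce a \emph{different} functor, $\A^1$-Nisnevich equivalent to $Fr(Y\wedge T^l)$, which is representable by a smooth ind-scheme. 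Concretely this requires two independent steps: (i) replace $Fr$ by a variant (in the paper, $Fr^{\text{st-id}}$ or the quasi-affine model $Fr^{\mathrm{qaf}}$) whose definition involves only globally defined polynomial data on $\PP^n_X$ with prescribed behaviour at infinity, so that representability becomes a moduli problem one can solve, and prove smoothness of the representing scheme by a Jacobian computation; (ii) prove that this variant is $\A^1$-equivalent to $Fr$, which is done via a lifting criterion against closed embeddings of affines (Proposition~\ref{prop:LiftCriteria}) together with Serre-vanishing type arguments. Neither step is a matter of reading off a description.

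\textbf{The inversion.} You identify the $\Omega_T$-property $R^l\simeq\mathcal{H}om_{\Hd(k)}(T,R^{l+1})$ as ``the hard part'' and propose to attack it by cofibrant replacement of $T$ and an ad hoc comparison of colimits. In the paper this is the \emph{easy} part: once one has termwise motivic equivalences $R^l\simeq Fr(Y\wedge T^l)$, the $\Omega$-property is inherited directly from Garkusha--Panin's Theorem (here Theorem~\ref{th:posomegaP1motfibrepl}) via Corollary~\ref{cor:M_P}. No separate argument at the level of ind-pairs is needed. What \emph{does} require care is arranging that the bonding maps $R^l\wedge T\to R^{l+1}$ are honest morphisms of ind-pairs; this is why the paper passes through the equivalence $Fr(Y\wedge T^l)\simeq Fr(Y\times\A^l)/Fr(Y\times(\A^l-0))$ (Lemma~\ref{lm:ConeAPrepl}), so that the $T$-suspension becomes the evident map $(\phi,\psi,g,x)\mapsto(\phi,\psi,x,g)$ of parameter spaces rather than something only defined at the sheaf level.

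In short: you have correctly located the input (Garkusha--Panin) and the target, but the bridge you describe as trivial is the substance of the theorem, and the step you flag as hard is a formal consequence.
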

In particular this implies the representability of stable motivic homotopy groups as unstable ones,
$\pi_{un}^{p,q}(R^{l}(Y))=[\Gm^{\wedge {p-l}}\wedge S^{q-l}, Y]_{\SH(k)}$, $q>0$, though the claim requires us to represent not only the terms $L^l$, but also to represent the structure morphisms $L^l\wedge T\to L^{l+1}$.

Our result is the application of the theory of framed motives \cite{GP14}, which gives in particular the computations of positively motivically fibrant replacements of infinite suspension spectra. 
The theory implies in addition 
that the spectra
$C^*(M^{\prime}_{T}(Y))_f$, where 
$C^*\colon \mathcal F\mapsto \mathcal F(-\times\Delta^\bullet)$, $\Delta$ is the standard simplical object in $\Sm_k$ given by affine spaces,
and 
$(-)_f$ denotes termwise application of the 
Nisnevich local replacement on simplicial pointed sheaves,
gives the $\Omega$-replacement in positive degrees of $\Sigma^\infty_{T} Y$.
We show that the simplicial pointed sheaf $C^*(L^l)$ 
is represented in the category of simplicial schemes, and we expect that more accurate analyse in our technique will show that these schemes are smooth. The representability of such type we have also for the resolution of
the bispectrum $\Sigma^\infty_{\Gm}\Sigma^\infty_{S^1} Y$.


\subsection{Framed motives} 
As noted above our result is the application of the theory of framed motives.

Studying of framed correspondences, and spectra of (pre)sheaves with framed transfers
ware suggested in the unpublished notes \cite{Voev-FrCor} by Voevodsky 
as an alternative approach to the stable motivic homotopy theory 
\cite{V98}, \cite{Jar00}, \cite{MV99}, \cite{M04}
that would be suitable for computational results. 
The idea had grow to the theory of framed motives introduced and developed by Garkusha and Panin \cite{GP14, GP_SHfr} (based on \cite{GP15} and in co-authorship Ananievski \cite{AGP16} and Neshitov \cite{GNP16}). 

In particular, for a smooth scheme $Y$ over a perfect filed $k$  \cite[theorem 4.1]{GP14} gives 
a computation of positively motivically fibrant replacement of 
the infinite $\PP^1$-suspension $\Sigma_{\PP}^\infty Y$.
Namely, it is given by the stably motivically equivalence of the $\PP^1$-spectrum of pointed Nisnevich sheaves
$$
\Sigma_{\PP}^\infty Y\simeq M_{\PP}(Y)_f = (C^*Fr(Y)_f, C^*Fr(Y\wT{1})_f,\dots C^*Fr(Y\wT{l})_f\dots ),
$$
where
$M_{\PP}(-)_f$ is motivically fibrant $\Omega_{\PP^1}$-spectra in positive degrees;
$(-)_f$ is denotes the Nisnevich local fibrant replacement of simplicial (pointed) sheaves;
$C^*\colon \mathcal Y\mapsto \mathcal Hom_{Sh
_\bullet}(\Delta^\bullet, \mathcal Y)$, 
and $Fr$ are the sheaves of framed correspondences.

Let us briefly recall that for $X,Y\in Sm_k$ the elements in $Fr(Y\wT{l})(X)$ are given by the equivalence classes of the data
$c=(Z\hookrightarrow \A^n_X, v\colon V\to \A^n_X,\alpha\colon V\to \A^n\times\A^l\times Y), \text{where}$
\begin{itemize}
\item{}
$V$ is an etale neighbourghood of a closed subscheme $Z$ in $\A^n_X$, and $Z=\A^n_X\times_{(\phi,psi),\A^n\times\A^l} 0$, 
\item{} 
the equivalence relation annihilates 
the choice of $V$, and 
identifies $(Z, V,\alpha)$ with $(0\times Z,\A^1\times V,t_{1},\alpha\circ pr)$, 
where $t_i$ denotes coordinate functions on $\A^n_X$ and $pr\colon \A^1\times V\to V$. 
\end{itemize}

{\it So our question precisely is 
cloud the spectrum $M_{\PP^1}(Y)$ for a smooth scheme $Y$ be represented up to termwise motivic equivalences 
by a spectra of smooth schemes, or pairs of smooth schemes?} 

Since we ask the question up to motivic equivalences
we just need to represent the morphisms $Fr(Y\wT{l})\to Fr(Y\wT{l+1})$ in the category of smooth schemes for a smooth $Y$.
\begin{remark}
Let us note that all mentioned constructions of models does not depend of the properties of the base scheme $S$ (though they depend on the properties of $Y$) at least for an affine $Y$,
while the computations given by the theory of framed motives holds for an an arbitrary smooth schemes but requires the assumption of a perfect base.
\end{remark}

\noindent(\textbf{Representability for $M_{fr}$.})
Firstly, we recall the 
summery of representability results for the case 
from \cite{EHKSY-infloopsp}, 
where the theory of framed motives is studied form the $\infty$-categorical view point.
\begin{nonum-theorem}[\cite{EHKSY-infloopsp}, Theorem 5.1.8]
For a smooth $Y\in \Sm_k$ 
over a perfect filed $k$ such that any finite set in $Y$ has an affine neighbourhood 
there is a pointed smooth ind-scheme $H^{fr}(Y)$ such that
$C^*(H^{fr}(Y))$ is equipped with a canonical structure of $\mathcal E_\infty$-space
such that there is a canonical equivalence 
$\Omega^\infty_T\Sigma^\infty_T Y\simeq (C^*(h^{nfr}(Y))_f)^{gp}$.
In particular 
$\Omega^\infty_T\Sigma^\infty_T Y\simeq (C^*(\mathrm{Hilb}^{fr}(\A^\infty))_f)^{gp}$,
where $\mathrm{Hilb^{fr}}(\A^\infty)$ parametrises finite subschemes $Z$ in $\A^\infty$ with a trivialisation of a (co)normal sheaf $I(Z)/I^2(Z)$. 
\end{nonum-theorem}
The mentioned above result can be considered as the representability for the $S^1$-spectra $M_{fr}(Y)$ called as a framed motive of $Y$, that gives the computation of a positively motivically fibrant replacement of $\Omega^\infty_{S^1}\Sigma^\infty_{G^1}\Sigma^\infty_{S^1}(Y)$ given by the framed motive $M_{fr}(Y)$ \cite[def. 5.2, th. 11.7]{GP14}.

Let us note that the results of the theory of framed motives, namely the mentioned computation \cite[th. 11.7]{GP14} recovered in \cite[cor. 5.5.15]{EHKSY-infloopsp} and additivity theorem \cite[th. 6.4]{GP14}, \cite[proposition 2.2.11]{EHKSY-infloopsp},
yields that
any model for $Fr(Y)$ natural with respect to morphisms 
$$Fr(A \amalg B)\to Fr(A), A =Y\amalg\dots\amalg Y, B =Y\amalg\dots\amalg Y$$
gives such a representability for $\Omega^\infty_T\Sigma^\infty_T Y$.
So up to the mentioned results the theorem is a corollary of the following 
\begin{nonum-theorem}[\cite{EHKSY-infloopsp}, theorem 5.1.5(iii) in combination with corollary 2.2.21]
The pointed sheaves $Fr(Y)$, 
where $Y\in \Sm_S$ is such that any finite set of points has an affine neighbourhood, 
are motivically equivalent in a natural way to the sheaves represented by pointed smooth ind-schemes $H^{fr}(Y)$.
\end{nonum-theorem}
Precisely, it is proven in \cite[theorem 5.1.5(iii)]{EHKSY-infloopsp}
the representability of the (pre)sheaves $Fr^\text{nr}(Y)$ 
that are motivically equivalent to $Fr(Y)$ by \cite[corollary 2.2.21]{EHKSY-infloopsp}.
The (pre)sheaves $Fr^\text{nr}(Y)$ are defined by replacing the etale neighbourghood $V$ in the definition of $Fr(Y)$ by the smallest possible domains for morphisms $(\phi_1\dots \phi_n)$ and $g$.
Namely the the functions $(\phi_1\dots \phi_n)$ are defined on the first order thickening of the support $Z$ in $\A^n_X$ and
$g$ is defined on $Z$. $Fr^\text{nr}(Y)$ are so-called normally framed correspondences firstly shared in the specialists community by A.~Neshitov and they was independently and deeply studied in \cite{EHKSY-infloopsp}.

\vspace{5pt}
In the present text we recover the mentioned above results of \cite{EHKSY-infloopsp} obtaining a model for $Fr(Y)$ with additional properties
\begin{proposition}
The pointed (pre)sheaves $Fr(Y)$,
where $Y\in \Sm_S$ is such that any finite set of points has an affine neighbourhood, 
can be represented up to a motivic equivalences by pointed smooth ind-schemes
$\cFs(Y)$ in a such way that
$\cFs(pt) = \varinjlim_n\cFs_n$ with $\cFs_n \subset \A^N_n$ being a smooth full intersection of a quadrics
such that the projection $\cFs_n\to \A^{\dim\cFs_n}$ is etale.
\end{proposition}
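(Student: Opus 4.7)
The plan is to realize $\cFs(pt)$ as a moduli of commuting matrices with a cyclic vector, exploiting the fact that the commutator relations $[A_i,A_j]=0$ are quadratic in the matrix entries. As a preliminary reduction, I would rewrite the data $(Z,V,\phi)$ of a framed correspondence in $Fr(pt)(X)$ in terms of the finite $\cO_X$-algebra $A=\cO_Z$ together with the ring map $\cO_X[x_1,\dots,x_n]\to A$ and a trivialisation of the conormal sheaf $I_Z/I_Z^2$ by the residues $\bar\phi_1,\dots,\bar\phi_n$. Up to motivic equivalence, this normally framed variant is the correct target for representability by \cite{EHKSY-infloopsp}.

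Next, I would stratify by the length $d$ of $Z$ and by the ambient dimension $n$. A cyclic $\cO_X[x_1,\dots,x_n]$-module $M$ of rank $d$ with cyclic generator $v$ is the same datum as a tuple $(A_1,\dots,A_n,v)$ where $A_i\in\mathrm{End}(M)\cong\A^{d^2}_X$ commute pairwise and $v\in\A^d_X$ is cyclic; the associated subscheme is recovered as $Z=V(\mathrm{ann}(v))\subset\A^n_X$. Together with the framing datum this places each correspondence into an affine space $\A^{N_n}_X$ cut out by the quadratic equations $A_iA_j-A_jA_i=0$. I would then restrict to the open locus where $v$ is cyclic and the framing is regular, and argue smoothness together with the complete intersection property by a dimension count. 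To exhibit the étale projection $\cFs_n\to\A^{\dim\cFs_n}$, I would arrange that on the parametrised family $A_1$ is regular semisimple, so that commutativity with $A_1$ forces the off-diagonal entries of $A_2,\dots,A_n$ to be determined by the diagonal ones; the Jacobian of the commutator relations with respect to the dependent entries is then invertible, and projection to the free coordinates gives the desired étale map.

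Assembling the ind-colimit $\cFs(pt)=\varinjlim_n\cFs_n$, I would construct a natural map of pointed Nisnevich sheaves $\cFs(pt)\to Fr(pt)$ sending $(A_1,\dots,A_n,v)$ to the associated $(Z,V,\phi)$, with $V$ a canonical étale neighbourhood of $Z$ in $\A^n_X$; motivic equivalence of this map reduces to the comparison between full and normally framed correspondences in \cite{EHKSY-infloopsp}. For an arbitrary $Y\in \Sm_S$ with the affine-neighbourhood property, I would extend the construction by parametrising the map $g\colon Z\to Y$ via ring homomorphisms $\cO(U)\to M$ for suitable affine opens $U\subset Y$; each such morphism is given by an $\cO_X$-linear datum living in an affine space, so $\cFs(Y)$ is again a closed subscheme of affine type, obtained as a relative version of $\cFs(pt)$ over $Y$.

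The principal obstacle lies in the third step: for $n\ge 3$ the commuting variety of $n$-tuples of $d\times d$ matrices is in general singular and is not a complete intersection, so the open locus carved out by cyclicity and framing regularity must be chosen precisely enough to leave only the ``good'' component. The combinatorial bookkeeping of which matrix entries are free versus determined by the quadratic relations, and the verification that the corresponding Jacobian has the expected rank, is the technical heart of the construction and the step most likely to require refinement of the naive strategy; in particular, the regular-semisimple normal form for $A_1$ may have to be replaced by a more delicate ``genericity'' condition compatible with the stratification of $\cFs_n$ by the Jordan type of the cyclic operator.
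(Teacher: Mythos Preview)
Your commuting-matrices route has a genuine gap, and it is precisely the one you flag in your last paragraph without resolving. The commutator equations $A_iA_j-A_jA_i=0$ produce $\binom{n}{2}d^2$ quadrics in $nd^2+d$ coordinates, and for $n\ge 3$ the commuting variety of $n$-tuples of $d\times d$ matrices is neither irreducible nor a complete intersection. Passing to the open locus where $v$ is cyclic and the framing is regular does not help: being a complete intersection is a property of the closed embedding, not something one can impose by shrinking to an open. Likewise, ``arranging that $A_1$ is regular semisimple'' is a Zariski-open condition that fails on entire strata of the scheme you are building, so it cannot furnish a globally defined \'etale projection $\cFs_n\to\A^{\dim\cFs_n}$; at best it gives an \'etale chart on a dense open of one component. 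In short, the approach does not deliver the specific structural claims of the proposition (complete intersection of quadrics, global \'etale projection), and you have not indicated a mechanism that would.

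The paper's construction is different and avoids the commuting variety entirely. One first parametrises tuples of global sections $s_1,\dots,s_n$ on $\PP^n_X$ of fixed degrees with prescribed restriction to the hyperplane at infinity; this is literally an affine space $\A^{N_n}$, and the conditions force $Z(s_1,\dots,s_n)\subset\A^n_X$ to be finite locally free over $X$ of constant rank $d$. What remains to specify a framed correspondence is a choice of clopen piece $Z\subset Z(s_1,\dots,s_n)$, i.e.\ an idempotent $e$ in the finite free $\cO_X$-algebra $A=\cO(Z(s_1,\dots,s_n))$. Writing $e$ in the rank-$d$ fibre coordinates of the total space of $A$ over $\A^{N_n}$, the single algebra equation $e^2-e=0$ unpacks into $d$ quadratic equations in $\A^{N_n+d}$; the resulting scheme $\cFs_n$ is \'etale over $\A^{N_n}$ because $d(e^2-e)=2e-1$ is invertible on the idempotent locus (Lemma~\ref{lm:etid}). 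Thus $\cFs_n\subset\A^{N_n+d}$ is a smooth complete intersection of $d$ quadrics, $\dim\cFs_n=N_n$, and the forgetful map $\cFs_n\to\A^{N_n}$ is the advertised \'etale projection. The quadrics in the statement are therefore idempotent equations, not commutator relations.

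For general $Y$ the paper proceeds as in your last paragraph: the extra datum $g\colon Z\to Y$ is parametrised by the Weil restriction of $Y$ along the finite flat $Z\to X$, which is smooth over $\cFs_n(pt)$ under the hypothesis that finite subsets of $Y$ admit affine neighbourhoods (Proposition~\ref{prop:st:id:Model}). The motivic equivalence $\cFs(Y)\simeq Fr(Y)$ is then Proposition~\ref{lm:Freq1}, which, as you anticipated, factors through the comparison with normally framed correspondences.
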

Our result is obtained using another replacement of $Fr(Y)$ that replaces 
the domain of the functions $(\phi_1,\dots \phi_n)$, in distinct to $Fr^\text{nr}(Y)$, by the maximal possible one (namely $\A^n_X$).

\vspace{5pt}
\noindent({\bf Representability for $M^{\Gm}_{fr}$})
In view of the representability question for $M^{\Gm}_{fr}(Y)_f$,
which computes over a perfect filed the motivically fibrant $\Omega$-bi-spectrum replacement of $\Sigma^{\infty}_{\Gm}\Sigma^\infty_{S^1} Y$ \cite[theorem 11.1]{GP14}, models for the sheaves $Fr(Y)$ gives the following
\begin{theorem}
For a smooth scheme $Y$ over a base $S$
such that any finite set of points of $Y$ has an affine neighbourhood
there is a functor $$L\colon \Gamma^{op}\to \Spec_{\Gm} \mathrm{ind}\text{-}\Sm^\textrm{cl-pair}$$ such that
$C^*(L)$ is a 
simplicial object in the category of $\Gm$-spectra of Segal's  $\Gamma$-spaces in the category $\mathrm{ind}\text{-}\Sch^\textrm{cl-pair}$, 
and the corresponding $\Gm$-$S^1$-bi-spectrum is termwise motivically equivalent to $M^{\Gm}_{fr}(Y)$ in positive degrees with respect to $S^1$ direction.
\end{theorem}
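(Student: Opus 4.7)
The plan is to build $L$ by combining the ind-pair model for $Fr(Y\wT{l})$ provided by Theorem~\ref{th:wOmeIndPairModelofSigma} and the representability proposition above with Segal's $\Gamma$-space machinery applied to the iterated disjoint unions $Y^{\amalg n}=Y\amalg\cdots\amalg Y$. Concretely, for each pointed finite set $n_+\in\Gamma^{op}$ I set $L(n_+)$ to be the $\Gm$-spectrum of smooth ind-pairs representing $M^\prime_T(Y^{\amalg n})$ in the sense of Theorem~\ref{th:wOmeIndPairModelofSigma} (with the $S^1$-direction split off so that only $\Gm$ remains as the suspension coordinate). The functoriality in $\Gamma^{op}$ is induced from the pointed fold/projection maps $Y^{\amalg n}\to Y^{\amalg m}$ attached to pointed maps $n_+\to m_+$; these are morphisms of smooth schemes satisfying the affine-neighbourhood hypothesis, and so produce morphisms of $\Gm$-spectra of ind-pairs by the naturality clause of Theorem~\ref{th:wOmeIndPairModelofSigma}.

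The key steps, in order, are: (i) verify that the hypothesis on $Y$ is preserved by finite disjoint unions, so that Theorem~\ref{th:wOmeIndPairModelofSigma} and the proposition apply uniformly to all $Y^{\amalg n}$; (ii) assemble the $\Gamma^{op}$-diagram $n_+\mapsto L(n_+)$ on the nose in $\Spec_{\Gm}\mathrm{ind}\text{-}\Sm^\textrm{cl-pair}$ by invoking the naturality of the construction; (iii) apply $C^*$ termwise and identify each $C^*(L(n_+)^l)$, up to pointed motivic equivalence, with $C^*Fr(Y^{\amalg n}\wT{l})$ via the comparison from the proposition; (iv) use the additivity theorem \cite[th.~6.4]{GP14}, \cite[proposition~2.2.11]{EHKSY-infloopsp} to conclude that $C^*(L(-))^l$ satisfies Segal's condition up to motivic equivalence, so that the associated $S^1$-spectrum is motivically equivalent to the framed motive $M_{fr}(Y\wT{l})$; (v) bundle these $S^1$-spectra together in the $\Gm$-direction using the structure maps supplied by Theorem~\ref{th:wOmeIndPairModelofSigma}, and conclude, via \cite[theorem~11.1]{GP14}, the desired termwise motivic equivalence with $M^{\Gm}_{fr}(Y)$ in positive $S^1$-degrees.

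The main obstacle I expect is step (iv): one must verify Segal's condition on $C^*(L(-))^l$ as a genuine termwise motivic equivalence of pointed simplicial sheaves, whereas additivity in the theory of framed motives is usually phrased as a stable equivalence of $S^1$-spectra after motivic localisation. The plan to handle this is to work after applying $(-)_f$ at each level, where the additivity of \cite{GP14} becomes a honest Nisnevich-local equivalence of simplicial presheaves, and then to transport the statement back through the sheaf-level representability of the proposition, which is compatible with $C^*$ and with local fibrant replacement. A secondary, purely notational issue is the passage from the \enquote{pair} category appearing in Theorem~\ref{th:wOmeIndPairModelofSigma} to the \enquote{cl-pair} category of the statement; since the underlying ind-schemes together with their closed embeddings are identical, this is a matter of repackaging the data rather than of new geometry.
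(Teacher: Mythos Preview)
Your outline matches the paper's strategy: the paper itself says the theorem ``can be immediately deduced using any natural model for $Fr(Y)$'' together with the additivity theorem \cite[th.~6.4]{GP14}, \cite[prop.~2.2.11]{EHKSY-infloopsp} and the computation \cite[th.~11.1, 11.7]{GP14}, exactly as you describe in steps (i)--(v). Your handling of step (iv) via termwise $(-)_f$ is also in line with how the paper (and \cite{GP14}) uses additivity.

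There is one point the paper flags explicitly that you gloss over. The statement asserts that $C^*(L)$ is a simplicial object in $\Gm$-spectra of $\Gamma$-spaces in $\mathrm{ind}\text{-}\Sch^\textrm{cl-pair}$ --- note $\Sch$, not $\Sm$. In your step (iii) you only say ``apply $C^*$ termwise'' and then pass to a motivic equivalence; you never check that $C^*(L(n_+)^l)$, i.e.\ the sheaf $X\mapsto L(n_+)^l(X\times\Delta^\bullet)$, is itself representable by an ind-pair of schemes at each simplicial level. The paper singles this out: ``the only one point that requires extra checking is the representability in the category of ind-schemes for $Fr(-\times \Delta^n, Y)$, but if we don't care about smoothness then it is not complicated.'' Concretely, you need to observe that your model $\cFs(Y^{\amalg n}\wT{l})$ is an affine (or quasi-affine) ind-scheme, so that base-changing the parameter scheme $X$ to $X\times\Delta^m$ amounts to forming a fibre product with $\Delta^m$, which keeps you inside ind-$\Sch$ (though not ind-$\Sm$, since $\Delta^m$ is only a closed subscheme of $\A^{m+1}$). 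This is easy but must be said, as it is the reason the target category in the conclusion drops from $\Sm$ to $\Sch$.

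A smaller remark: your passage from the open-pair output of Theorem~\ref{th:wOmeIndPairModelofSigma} to the closed-pair category in the statement is not purely notational. The paper treats the $T$- and $\PP^1$-pictures (open vs.\ closed pairs) via separate but parallel constructions; for the $\Gm$-direction you should rather invoke the closed-pair model (e.g.\ via $(\PP^1,\infty)$ or the $\Gm$-structure maps built from the representable $\cFs(Y\wT{l})$ directly), not repackage the open-pair $T$-spectrum.
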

Let us note again that this can be immediately deduced using any natural model for $Fr(Y)$, 
the only one point that requires extra checking is the representability in the category of ind-schemes for $Fr(-\times \Delta^n, Y)$, but if we don't care about smoothness then it is not complicated.

One can note that seeking about the representability for the $S^1$-spectra and bi-spectra we actually mean the representability of a $\mathcal E_\infty$-spaces ($\Gamma$-spaces) and moreover even just about the functors $\Gamma^{op}\to \mathrm{ind}\text{-}\Sm$ (or $\mathrm{ind}\text{-}\Sm^\textrm{cl-pair}$). 
Then the structure morphisms of 
the corresponding $S^1$-spectra (bi-spectra) are given by morphisms of simplicial smooth schemes (or we can replace them by non-smooth ind-schemes),
but the author don't know the model for $S^n$ is smooth schemes.
Actually, the representability if $\mathcal E_\infty$-spaces looks being much more natural question.
Nevertheless 
if we want to deduce the data to the pure algebra-geometic objects, then
it looks being much more natural to go to the $T$ (or $\PP^1$) spectra.

\noindent(\textbf{Representability for $M_{\PP^1}$.})
The main result of the article is the theorem 
\begin{theorem}

Let $Y\in \Sm_S$ over a base scheme $S$ of a finite Krull dimension, and assume that any finite set of points in $Y$ has an affine neighbourhood. 
Then $M_{\PP^1}$ is termwise motivically equivalent to a $\PP^1$-spectrum of pointed sheaves defined by a $T$-spectrum of inductive systems of open pairs over $S$. 

Precisely, 
there is a morphism of spectra of simplicial pointed sheaves 
$f\colon M^\prime_{\PP^1}\to M_{\PP^1}$,
$$
M^\prime_{\PP^1} = 
(L^0(Y), L^1(Y),\dots,L^l(Y) )\in {\SHd(S)},\;
L^l(Y)\simeq\varinjlim_{n} (\cFs_n^{l}(Y)  / \cEs_n^l(Y) ),
$$
such that
\begin{itemize}
\item[(-)]  $f$ is a term-wise $\A^1$-Nis-equivalence;
\item[(-)] $\cFs_n^{l}(Y) ) / \cEs_n^l(Y)$
denotes the factor sheaves for the inductive system of 
a pair of smooth $S$-scheme $\cFs_n^{l}(Y)$ and an open subscheme $\cEs_n^{l}(Y)$;
\item[(-)] the structure morphisms $L^{l}(Y)\wedge (\PP^1,\infty) \to L^{l+1}(Y)$ are given by the morphisms of schemes
$e_l\colon \cFs_n^{l}(Y) \times \A^1 \to \cFs_{n-1}^{l+1}(Y) $ 
such that 
$e_l^{-1}(\cEs_{n-1}^{l+1}(Y))\supset (\cEs_n^l(Y)\times \A^1) \cup (\cFs_n^l(Y)\times (\A^1-0))$
in composition with the standard morphism of pointed sheaves $(\PP^1,\infty)\to T$.
\end{itemize}
\end{theorem}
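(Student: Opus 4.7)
The strategy is to combine the Garkusha-Panin computation of the $\Omega_{\PP^1}$-replacement of $\Sigma^\infty_{\PP^1}Y$ by the spectrum of pointed sheaves $(C^*Fr(Y\wT{l})_f)_{l\geq 0}$ with the geometric representability of $Fr(Y)$ via the ind-scheme $\cFs(Y)$ provided by the earlier Proposition, applied simultaneously to all levels of the $\PP^1$-spectrum. The key point that goes beyond the case $l=0$ is that we need the stabilisation maps $Fr(Y\wT{l})\wedge T\to Fr(Y\wT{l+1})$ (extending a framing by one coordinate) to lift, compatibly in $l$, to honest morphisms of smooth schemes respecting the pair structure.

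First I would construct the levels $\cFs_n^l(Y)$ and $\cEs_n^l(Y)$ by adapting the construction of the Proposition to the sheaves $Fr(Y\wT{l})$. The sheaf $Fr(Y\wT{l})$ is parametrised by data $(Z\subset\A^n_X,\,\phi\colon V\to\A^n\times\A^l,\,g\colon V\to Y)$ modulo equivalence, and one obtains from the same quadric-type equations as in the Proposition an ind-smooth scheme $\cFs^l(Y)=\varinjlim_n \cFs_n^l(Y)$ together with an open subscheme $\cEs_n^l(Y)\subset \cFs_n^l(Y)$ cutting out the locus corresponding to the basepoint (the ``empty'' or degenerate framings that trivialise after $\A^1$-homotopy). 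The factor sheaves $\cFs_n^l(Y)/\cEs_n^l(Y)$ are motivically equivalent to the natural pointed subsheaves of $Fr(Y\wT{l})$ filtering it by the ambient dimension $n$, so that in the colimit $L^l(Y)=\varinjlim_n \cFs_n^l(Y)/\cEs_n^l(Y)$ is motivically equivalent to $Fr(Y\wT{l})$. Nisnevich-local and $\A^1$-local fibrant replacement is automatic termwise because the Proposition already gives the equivalence at the unpointed level.

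Next I would construct the structure morphisms $e_l$. The stabilisation of framed correspondences sends $(Z,\phi_1,\dots,\phi_n,\alpha)\in Fr_n(Y\wT{l})$ smashed with a coordinate $t\in\A^1$ to the correspondence $(Z\cap\{t=0\},\phi_1,\dots,\phi_{n-1},t,\alpha)$ viewed in $Fr_{n-1}(Y\wT{l+1})$, the drop of the subscript $n$ reflecting the fact that the new coordinate is used as one of the $\phi_i$. This prescription is visibly algebraic in the parametrising data, so it yields a scheme morphism $e_l\colon \cFs_n^l(Y)\times\A^1\to \cFs_{n-1}^{l+1}(Y)$. The preimage of the basepoint open $\cEs_{n-1}^{l+1}(Y)$ contains $\cEs_n^l(Y)\times\A^1$ (degenerate input) and $\cFs_n^l(Y)\times(\A^1-0)$ (the fibre over a non-zero $t$ has empty support $Z\cap\{t=0\}=\emptyset$ in the relevant sense, hence the output correspondence is the basepoint), which is exactly the condition required for $e_l$ to descend to a morphism of pointed sheaves $L^l(Y)\wedge(\PP^1,\infty)\to L^{l+1}(Y)$ after composing with the standard $(\PP^1,\infty)\to T$.

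Finally I would check that the resulting morphism of spectra $f\colon M'_{\PP^1}\to M_{\PP^1}$ is a termwise $\A^1$-Nis-equivalence. Termwise this is the content of the Proposition, and the squares commute by construction of $e_l$ as the geometric realisation of the same formula defining the sheaf-level stabilisation. The main obstacle I anticipate is the bookkeeping ensuring that the ind-system indices fit together: one must check that $e_l$ actually decreases the index from $n$ to $n-1$ while preserving smoothness and the open-pair structure, and that $\cEs_n^l(Y)$ can be chosen globally (in $l$ and $n$) in such a way that all the inclusions $e_l^{-1}(\cEs_{n-1}^{l+1})\supset\cEs_n^l\times\A^1\cup\cFs_n^l\times(\A^1-0)$ hold on the nose and not only up to $\A^1$-homotopy. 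This will require revisiting the quadric equations of the Proposition to verify that they are compatible with adding a coordinate as above, and it is the step where the hypothesis on affine neighbourhoods in $Y$ enters a second time.
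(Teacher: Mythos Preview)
Your proposal misses the key reduction that the paper uses and contains a confused description of the structure maps.

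The central idea you are missing is Lemma~\ref{lm:ConeAPrepl}: the motivic equivalence
\[
Fr(Y\wT{l})\ \simeq\ Fr(Y\times\A^l)\,/\,Fr\bigl(Y\times(\A^l-0)\bigr).
\]
This is what lets the paper avoid building a new model for each $Fr(Y\wT{l})$. Instead one applies the \emph{same} functor $\cFs_n(-)=\mathcal Fr^{\text{st:id}}_n(-)$ (already shown representable by a smooth scheme in Proposition~\ref{prop:st:id:Model}) to the smooth scheme $Y\times\A^l$ and to its open subscheme $Y\times(\A^l-0)$; the open pair $(\cFs_n^l(Y),\cEs_n^l(Y))$ is then simply $\bigl(\cFs_n(Y\times\A^l),\,\cFs_n(Y\times(\A^l-0))\bigr)$, and openness is automatic from functoriality in open immersions. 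Your plan to ``adapt the quadric equations'' separately at each level and then locate the basepoint locus as an open subscheme is exactly the work that this lemma makes unnecessary, and you give no indication of how you would carry it out.

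Your description of the structure map is also incorrect. Given $(Z;\phi_1,\dots,\phi_n;\psi_1,\dots,\psi_l;g)$ and a point $x\in\A^1$, the paper sends this to $(Z;\phi_1,\dots,\phi_n;\psi_1,\dots,\psi_l,x;g)$: the scalar $x$ is appended as a new \emph{constant} function $\psi_{l+1}\equiv x$ on the target side. Your formula $(Z\cap\{t=0\},\phi_1,\dots,\phi_{n-1},t,\alpha)$ treats the scalar $t$ as if it were a function on $\A^n_X$ and intersects $Z$ with its zero locus, which makes no sense; a constant has no zero locus unless it equals $0$. With the correct map, the verification that $x\neq 0$ lands in the basepoint is immediate (the constant function $x$ never vanishes, so the support of the level-$(l{+}1)$ correspondence is empty), and the compatibility with the open pair is tautological. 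No ``bookkeeping of indices'' or second use of the affine-neighbourhood hypothesis is needed here; that hypothesis enters only through the representability of the Weil restriction in Proposition~\ref{prop:st:id:Model}.
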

\begin{remark} Note that if we don't care to get the natural model with respect to $Y$ and
if $S=\Spec k$ for a regular noetherian ring $k$, then the case of an arbitrary smooth $Y$ can be reduced to the case of a smooth affine $Y$
by Jouanolou-Tomason's trick. 
\end{remark}
We give also modifications for the case of open or closed pairs $Y$ and $U\subset Y$.
We give two proofs for the theorem. The first one is presented in a sketching way and this proof gives result with more generality, while 
the second proof is more elementary and precise.

%

\subsection{Notations and conventions.}
For a closed subscheme $Z\subset X$ denote by $I(Z)\subset\cO(X)$ the sheaf of ideals of functions vanishing on $Z$. 
For a sheaf of ideals $I\subset \cO(X)$ denote by $Z(I)$ the (nonreduced) vanishing locus of $I$.

For a coherent sheaf $M$ on a scheme $X$ over $S$ we denote by $\Gamma(X,M)$ the coherent sheaf on $X$ that is direct image of $M$ under the canonical projection $X\to S$.

We call the motivic equivalences on the categories of pointed (simplicial) presheaves and sheaves also as $\A^1$-Nis-equivalences. 

\subsection{Acknowledgement}
The author thanks Grigory Garkusha for the consultations with reading of \cite{GP14} and for the consultations on possible criteria of unstable motivic equivalences.
The author thanks Marc Hoyois for the consultation with the representability of the Weil restriction functors.

\section{Framed correspondences and positive $\Omega_{\PP^1}$ spectra.} 

\subsection{Framed correspondences} 

Here is the first our list 
of variations of framed correspondences. 

Let $Y\in \Sm_S$ and $U\subset Y$ is open.
\begin{definition}[(Nisnevich) {framed corr.} $Fr=Fr^\text{Nis}$, \cite{Voev-FrCor}, or def. 2.1 in \cite{GP14}, or def. 2.1.2 in \cite{EHKSY-infloopsp}]\label{def:frcor}
$Fr_n(Y/U\wT{l})$ is a pointed sheaf of sets with the sections
$Fr_n(X,Y/U\wT{l})$ for $X\in \Sch_S$
given by the equivalence classes of the data
$(Z,V, \alpha)$,
where $V\to \A^n_X$ is an etale neighbourghood of a closed subscheme $Z\subset \A^n_X$ finite over $X$, and 
$\alpha=\colon V\to \A^n\times\A^l\times Y$
is a morphism of schemes such that 
$Z = V\times_{\alpha,\A^{n+l}\times Y,(0\times i)} (0\times(Y\setminus U))$, $i\colon Y\setminus U\hookrightarrow Y$;  
all elements $(Z,V, \alpha)$ with $Z=\emptyset$ are pointed;
the equivalence is up to a choice of the etale neighbourhood $V$.
\end{definition}
\begin{remark}
The remarkable Voevodsky's lemma, see \cite[prop 3.5]{GP14}, \cite[cor. A.1.7]{EHKSY-infloopsp} states that $$Fr_n(Y/U\wT{l})=Sh_{Nis,\bullet}( \PP^n_X/\PP^{n-1}_X , (\A^{n+l}/\A^{n+l}-0)\wedge (Y/U) ).$$
\end{remark}
\begin{definition}[first order framed corr. $Fr^\text{1th}$]\label{def:firstordfrcor}
$Fr^{1th}_n(Y/U\wT{l})$ is a pointed sheaf of sets with the sections
on $X\in \Sch_S$ given by the data
$(Z, \alpha)$, where 
$Z\subset \A^n_X$ is closed, and
$\alpha\colon Z_2\to \A^{n+l}\times Y$, where $Z_2=Z(I^2(Z))$, is a morphism of schemes such that 
$Z=Z_2\times_{\A^{n+l}\times Y}  (0\times(Y\setminus U))$. 
The element with $Z=\emptyset$ is pointed.
\end{definition}
\begin{definition}[normally framed corr. $Fr^\text{nr}$, \cite{}, for the case $U=\emptyset$ and $l=0$ 
it is agreed with \cite{AN-HhMWtr} def 4.1 and \cite{EHKSY-infloopsp}, def. 2.2.2.]\label{def:normfrcor}
$Fr^{nr}_n(Y/U\wT{l})$ is a pointed sheaf with the sections 
$Fr^{nr}(X,Y/U\wT{l})$ for $X\in \Sch_S$ given by the data
$(Z,W, \tau,\beta)$, where 
$Z\subset W\subset Z(I(Z)^2)\subset \A^n_X$ are closed,
$\tau\colon I(W)/(I^2(Z))\simeq\cO^n(W)$,
$\beta\colon W\to \A^l\times Y$ such that $Z = W\times_{\A^l\times Y} (0\times (Y\setminus U))$. 
\end{definition}

\begin{definition}[Zariski framed corr. $Fr^{Zar}$.]\label{def:zarfrcor}
For $Y\in Sm_S$, and an open $U\subset Y$,
$Fr^{Zar}_n(X,Y/U\wT{l})$ is a sheaf with the sections
given by the data
$(Z,V, \phi, \beta)$,
where $V\to \A^n_X$ is a Zariski neighbourghood of a closed subscheme $Z\subset \A^n_X$, 
$W\subset V$ is closed,
and 
$\phi\colon V\to \A^n$, $\beta\colon W\to \A^l\times Y$ 
are morphisms of schemes such that 
$W = V \times_{\phi,\A^n,0} 0$, 
$Z = W\times_{\A^l\times Y} 0\times (Y\setminus U)$.
\end{definition}
\begin{remark}
The Zariski framed corr. does not satisfy Zariski version of the Voevodsky's lemma since $g$ is a map $W\to Y$ but not $V\to Y$.
\end{remark}
\begin{definition}[polynomial framed correspondences]
The sections of the presheaf $Fr^{pol}_n(X, Y/U\wT{l})$
are given by the data 
$(\phi,\beta)$ where 
$\phi\colon \A^n_X\to \A^n$,
$\beta\colon W\to \A^l\times Y$,
$W=\A^n_X\times_{\phi,\A^n,0}0$
are such that 
$Z=W\times_{g,\A^l\times Y,0\times i}(0\times (Y\setminus U))$,
is finite over $X$, where
$i\colon Y\setminus U\to Y$ is the canonical embedding.
\end{definition}

\begin{remark}\label{rem:Uemphl0}
If $U=\emptyset$ and $l=0$ the $W$ in the definitions above is an excessive data, and $W=Z$.
In particular, 
$Fr^\text{nr}(X,Y)$ consists of sets $(Z,\tau,g)$ where $Z\subset \A^n_X$ is closed and finite over $X$, $\tau\colon \mathcal O(Z)^n\simeq I(Z)/I^2(Z)$, $g\colon Z\to Y$.

Let us note that $Fr^{nr}_n(Y\wT{l})$ alternatively can be defined by the following (see \cite[def 4.1]{AN-HhMWtr}): 
a pointed sheaf with the sections 
$Fr^{nr}(X,Y/U\wT{l})$ for $X\in \Sch_S$ is given by the data
$(Z, \phi,\psi, g)$, where 
$Z\subset \A^n_X$ is a closed finite over $X$,
$(\phi,\psi)\colon Z(I^2(Z))\to \A^{n+l}$,
$g \colon Z\to Y$ such that $Z = Z(I^2(Z))\times_{\A^{n+l}} 0$. 
\end{remark}
\begin{remark}\label{rem:smallerW}
Under the above definitions $Fr^*(T^{\wedge 1})=Fr^*(\A^1/\Gm)$. 
In the same time we can replace $W$ in the above definitions by the smaller subscheme.

For example
if we define the sections of $Fr^\text{pol}_n(X,Y\wT{l})$
by the data
$(\phi,\psi, g)$,
where 
$(\phi,\psi)\colon \A^n_X\to \A^{n+l}$, and $g\colon W\to Y$, $W=\A^n_X\times_{\A^{n+l}}0$ are such that 
$Z=W\times Y (Y\setminus U)$ is finite over $X$.
Then new $Fr^{pol}(Y/U\wT{l})$ differs form the above one, but it is $\A^1$-Nis-equivalent.
\end{remark}

Denote by $Fr_n^*(l)$ (and $Fr_n^*$) the bi-functor on the product of $Sch^{op}_S$ with the category of pairs $(Y,U)$, which is the full subcategory in the category of arrows of $\Sm_S$, (or on $Sch^{op}_S\times \Sm_S$) given by $Fr^*(X,Y/U\wT{l})$ (or $Fr^*(X,Y)$). 
Then there is a sequence of natural morphisms
\begin{equation}\label{eq:idZarHensnr}
\begin{array}{ccccccccc}
Fr^{pol}_n(l)&\to&
Fr^{Zar}_n(l) &\to &
Fr_n(l) &\to &
Fr^{1th}_n(l) &\to&
Fr^{nr}_n(l) 
\\
(\phi,\beta)&\mapsto&
(Z,W,V,\phi,\beta) &\mapsto& 
(Z,V,(\phi,\beta)) &\mapsto& 
(Z,(\phi,\beta)\big|_{Z_2}) &\mapsto&
(Z,W,\tau,\beta\big|_W)
\\
&&
W \stackrel{\Delta}{=} V \stackrel{\Delta}{=} \A^n_X&&
&&
Z_2=Z(I^2(Z))&&
W\stackrel{\Delta}{=}Z(\phi), \\
&&&&&&&&
\tau \stackrel{\Delta}{=} d\alpha^*
\end{array}\end{equation}

\begin{definition}\label{def:sigmastabilization}
Define the maps $\sigma^*\colon Fr^*_n(X,Y/U\wT{l})\to Fr^*_{n+1}(X,Y/U\wT{l})$
$$\begin{array}{rlcl}
Fr^\text{Nis} &(Z,V,\alpha)&\mapsto &(0\times Z,\A^1\times V,t,\alpha)\\
Fr^\text{1th} & (Z,\alpha)&\mapsto &(0\times Z,t,\alpha)\\
Fr^\text{nr} & (Z,W,\tau,\beta)&\mapsto &(0\times Z,0\times W,(dt,\tau),\beta)\\
Fr^\text{Zar}&
(Z,W,V,\phi, \psi,g)&\mapsto &(0\times Z,0\times W, \A^1\times V,t,\phi,\psi,g)\\
Fr^\text{pol}_n&(\phi,g)&\mapsto &(t,\phi,g)\\
\end{array}$$
Define
$Fr^*(Y\wT{l})=\varinjlim_n Fr_n^*(Y\wT{l})$.
\end{definition}

\begin{remark}
In the list above the bi-functors $Fr^{1th}_*$, and $Fr_*$ define the graded categories,
but others does not.
Bi-functors $Fr^{1th}$ and $Fr$ define 'categories' with the associativity up to a canonical $\A^1$-homotopy,
while all others define 'categories' with a 'composition' up to $\A^1$-homotopy.

Let us note also that
$Fr^{1th}_*(Y)$ is represented by a scheme in a similar way as $Fr^{nr}_*(Y)$ in \cite[theorem 5.1.5]{EHKSY-infloopsp}.
\end{remark}

It is proven in \cite[corollary 2.2.21]{EHKSY-infloopsp} that the morphism $Fr(Y)\to Fr^{nr}(Y)$ is an $\A^1$-Nis-equivalence, and a close statement for the connected components for the correspondences in $T\wT{l}$ is written in \cite[cor. 4.9]{AN-HhMWtr}.
We generalize this by the following.

\begin{proposition}\label{prop:Freq}
For an affine $Y\in \Sm_S$ and open $U\subset Y$ 
the morphisms of the sequence \eqref{eq:idZarHensnr} induces $\A^1$-equivalences on affines
after the $\sigma$-stabilization.

For an arbitrary smooth $Y$ the morphism induces motivic equivalences of sheaves after the $\sigma$-stabilization.
\end{proposition}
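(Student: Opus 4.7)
My plan is to treat each arrow in \eqref{eq:idZarHensnr} separately, establishing the case of affine $Y$ first and then lifting to the sheaf-level motivic equivalence for general smooth $Y$ via a Nisnevich-local argument (enabled by the hypothesis that every finite set of points of $Y$ has an affine neighbourhood). The rightmost equivalence $Fr\to Fr^{nr}$ I would simply invoke from \cite[corollary 2.2.21]{EHKSY-infloopsp}, so the bulk of the work concentrates on the three remaining arrows.

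For $Fr^{1th}\to Fr^{nr}$ I would construct an explicit $\A^1$-homotopy inverse by lifting a normal framed datum $(Z,W,\tau,\beta)$ to a morphism $Z_2\to \A^{n+l}\times Y$. The key point is that $W\hookrightarrow Z_2$ is a square-zero thickening because $I(W)^2\subset I(Z)^2$ vanishes in $\cO(Z_2)$, so for affine $Y$ the formal smoothness of $Y/S$ provides an extension $\widetilde\beta\colon Z_2\to \A^l\times Y$, while $\tau$ is extended analogously to $\widetilde\phi\colon Z_2\to\A^n$ with $W=Z(\widetilde\phi)$. The ambiguity in each lift is parametrised by derivations valued in the square-zero ideal $I(W)/I^2(Z)$, and linear rescaling of such a derivation by $s\in\A^1$ yields a well-defined $\A^1$-homotopy between any two lifts. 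The map $Fr\to Fr^{1th}$ I would handle in the same spirit: a morphism on $Z_2$ is lifted to the henselization $\cO^h_{\A^n_X, Z}$ using formal smoothness of the affine $Y$, hence realised on some etale neighbourhood $V$, and two such extensions are again $\A^1$-homotopic through an affine deformation of the resulting functions.

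For the leftmost arrows $Fr^{pol}\to Fr^{Zar}\to Fr$ the $\sigma$-stabilization is essential: the extra coordinate $t$ allows me to convert an etale (respectively Zariski) local datum into a Zariski (respectively polynomial) one. Explicitly, given an etale neighbourhood $V\to\A^n_X$ of $Z$ with framing $(\phi,\beta)$, I would form the $\sigma$-stabilized correspondence on $\A^1\times V$ and use the new coordinate $t$ to contract the unwanted etale branches of $V$ away from $Z$ into a Zariski neighbourhood inside $\A^{n+1}_X$ via a standard affine deformation. To promote a Zariski-local $\phi$ to a globally polynomial $\phi$ on $\A^n_X$, I would multiply $\phi$ by a denominator $q$ that is invertible on a neighbourhood of $Z$ and use the $\sigma$-coordinate to interpolate between $q\phi$ and $\phi$, producing a $Fr^{pol}$ correspondence that is $\A^1$-homotopic to the original.

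The hard part will be the combinatorial bookkeeping in this last step, namely ensuring that the polynomial extension of $\phi$ does not acquire spurious zeros outside the original neighbourhood which would enlarge the support $Z$ and violate the finiteness condition over $X$. I expect the affine-neighbourhood hypothesis on $Y$ to play a crucial role here, guaranteeing the existence of globally defined polynomial denominators and of the required extensions of $\beta$. Once every arrow is shown to be an $\A^1$-equivalence after $\sigma$-stabilization in the affine case, the sheaf-level motivic equivalence for general smooth $Y$ follows by Nisnevich locality combined with the affine-neighbourhood hypothesis.
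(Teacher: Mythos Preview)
Your strategy differs from the paper's in a significant methodological way. The paper does not attempt to construct explicit $\A^1$-homotopy inverses; instead it applies a uniform criterion (Proposition~\ref{prop:LiftCriteria}, which packages the argument behind \cite[proof of 2.2.21]{EHKSY-infloopsp}): a morphism of presheaves satisfying closed gluing together with the lifting property with respect to closed embeddings of affines is automatically an $\A^1$-Nis-equivalence. Each arrow in \eqref{eq:idZarHensnr} is then handled by verifying this lifting property, which amounts to solving a single extension problem for one affine pair $X_0\hookrightarrow X$ rather than producing a section natural in $X$. The paper also inserts an auxiliary variant $Fr^{\text{g-nr}}$ (globally normally framed, with the intermediate scheme $W$ no longer constrained to lie inside $Z(I^2(Z))$) through which the leftmost arrows factor; the $\sigma$-stabilization then enters only at the step $Fr^{\text{g-nr}}\to Fr^{\text{nr}}$, via an explicit presentation of an \'etale neighbourhood of $W$ as a vanishing locus in a higher-dimensional affine space (Lemma~\ref{lm:EqforNeigh2}), while the remaining arrows $Fr^{\text{pol}},Fr^{\text{Zar}}\to Fr^{\text{g-nr}}$ reduce to the Chinese remainder theorem. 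The passage to general smooth $Y$ is handled by the \v{C}ech argument of Lemma~\ref{lm:Checkcov}.

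Your plan has a genuine gap at the point where you build an $\A^1$-homotopy inverse for $Fr^{\text{1th}}\to Fr^{\text{nr}}$ (and similarly for $Fr\to Fr^{\text{1th}}$). Formal smoothness of $Y$ gives \emph{existence} of a lift $\widetilde\beta\colon Z_2\to \A^l\times Y$, but not one that is natural in $X$; without naturality you do not obtain a morphism of presheaves $Fr^{\text{nr}}\to Fr^{\text{1th}}$, and hence the criterion of Lemma~\ref{lm:HomEq} does not apply. Your observation that any two lifts are connected by the affine line through the linear structure on derivations is correct but does not close this gap: it lets you compare two candidate inverses once they exist as presheaf maps, but it does not manufacture one. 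This is exactly the difficulty that the lifting-property criterion is designed to bypass, by trading a global section for a pointwise extension problem. Your treatment of $Fr^{\text{pol}}\to Fr^{\text{Zar}}\to Fr$ is likewise too impressionistic; the route through $Fr^{\text{g-nr}}$ and the Chinese remainder theorem used in the paper is more direct and avoids the ``spurious zeros'' issue you anticipated by adjoining a single extra coordinate with a function $(t_{n+1}+1)\phi_{n+1}-1$ that kills the unwanted component $\hat W$.
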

\begin{proof}
The claim follows from lemmas \ref{lm:Freq1} and \ref{lm:Freq} in the Appendix C \ref{sect:FrEquiv}.  The second one follows form from lemma \ref{lm:Checkcov}. 
\end{proof}

\subsection{Positively motivically fibrant $\Omega_{\PP^1}$ replacements}

\newcommand{\Smop}{\mathrm{Sm}^{\mathrm{o-pair}} }

Let $k$ be a perfect filed.
\begin{theorem}[Garkusha-Panin, theorem 4.1 in \cite{GP14}]
\label{th:posomegaP1motfibrepl}
Let $Y\in Sm_k$. 
Then 
$\Sigma_{\PP}^\infty\simeq M_{\PP}(-)_f \colon Sm_k\to \SH(k)$,
where
$$M_{\PP}(Y)_f = (C^*Fr(Y)_f, C^*Fr(Y\wT{1})_f,\dots C^*Fr(Y\wT{l})_f\dots )$$
and
$M_{\PP}(-)_f$ lands 
in motivically fibrant $\Omega_{\PP^1}$-spectra in positive degrees, where
$(-)_f$ is
the Nisnevich local fibrant replacement functor $(-)_f$ on the category of simplicial (pointed) sheaves $SSh_{\bullet}$,
and $C^*\colon$ is
the endo-functor $C^*\colon \mathfrak Y\mapsto \mathcal Hom_{Pre_\bullet}(\Delta^\bullet, \mathfrak Y)$ 
on $SSh_{\bullet}$.
\end{theorem}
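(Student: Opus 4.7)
The plan is to reduce the theorem to the framed cancellation theorem of Ananyevskiy--Garkusha--Panin, with the remaining pieces being formal consequences of the framed-motive machinery. Concretely, two things must be established: that each level $C^*Fr(Y\wT{l})_f$ is motivically fibrant, and that for $l \geq 1$ the bonding map
\[ C^*Fr(Y\wT{l})_f \longrightarrow \Omega_{\PP^1} C^*Fr(Y\wT{l+1})_f \]
is a motivic weak equivalence. Once these are in place, the stable equivalence $\Sigma_{\PP^1}^\infty Y \simeq M_{\PP}(Y)_f$ will follow from a ``trivial framing'' section.

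The first step is to set up the structure. Voevodsky's lemma (recalled in the excerpt) identifies $Fr_n(Y\wT{l})$ with the pointed Nisnevich mapping sheaf $\mathcal{H}om_{\mathrm{Sh}_\bullet}(\PP^n/\PP^{n-1}, (\A^{n+l}/(\A^{n+l}-0)) \wedge Y)$, so after stabilizing in $n$ one sees $Fr(Y\wT{l})$ as a colimit of $T$-loop spaces of $T^{l}$-suspensions of $Y$. Applying the Suslin complex $C^*$ enforces $\A^1$-invariance in the presence of the canonical framed transfer structure on $Fr$, and the Nisnevich-local replacement $(-)_f$ supplies Nisnevich descent; combined with the Garkusha--Panin strict homotopy invariance theorem for stable framed presheaves over a perfect field (which is where the assumption on $k$ enters), this shows each $C^*Fr(Y\wT{l})_f$ is a motivically fibrant simplicial sheaf.

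The hard part will be the $\Omega_{\PP^1}$-spectrum property: one must show that smashing with $T$ is non-degenerate on the category of stable framed, strictly $\A^1$-invariant, Nisnevich excisive presheaves. This is the cancellation theorem for framed motives, proved by explicit manipulations with framed correspondences in the spirit of Voevodsky's cancellation theorem for finite correspondences, and it is the genuine technical heart of the argument. The positivity restriction $l \geq 1$ is forced because cancellation requires a free $T$-factor to absorb. Given cancellation, the stable equivalence $\Sigma_{\PP^1}^\infty Y \simeq M_{\PP}(Y)_f$ follows formally: the ``trivial framing'' map $\Sigma_{\PP^1}^\infty Y \to M_{\PP}(Y)_f$ induces isomorphisms on stable motivic homotopy groups, which can be checked after $\Omega^\infty_{\PP^1}$ via the Garkusha--Panin recognition principle for framed infinite-loop spaces, thereby identifying the target spectrum with the infinite $\PP^1$-suspension of $Y$ in $\SH(k)$.
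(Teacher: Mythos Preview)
The paper does not give its own proof of this statement: it is quoted verbatim as Theorem~4.1 of \cite{GP14} and used as a black box. There is therefore nothing in the present paper to compare your argument against; the paper simply imports the result and then proceeds to apply it (via Corollary~\ref{cor:M_P}) to its various models $Fr^*$.

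That said, your outline is a fair high-level summary of how the result is established in \cite{GP14} and its companion papers: motivic fibrancy of the levels comes from strict $\A^1$-invariance of stable framed presheaves over a perfect field \cite{GP15}, the positive $\Omega_{\PP^1}$-spectrum property is the framed cancellation theorem \cite{AGP16}, and the identification with $\Sigma^\infty_{\PP^1}Y$ uses Voevodsky's lemma together with the recognition of $M_{\PP}(Y)$ as the stabilization of $T$-loop spaces. One small caution: your final paragraph invokes a ``recognition principle for framed infinite-loop spaces'' to deduce the stable equivalence, but in \cite{GP14} the logic runs the other way---the stable equivalence is obtained directly from the levelwise description via Voevodsky's lemma and the cone/additivity theorems, and the recognition principle is a consequence rather than an input. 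If you intend this as a genuine proof rather than a plausibility sketch, that step needs to be rewired.
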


\begin{corollary}\label{cor:M_P}
Let $Fr^*$ be a bi-functor on $\Sm_S\times \Sm_S^{pair}$ 
that restriction on the category of affine schemes is $\A^1$-equivalent to $Fr(Y\wT{l})$
Then $M_{\PP}^*(Y)_f = (C^*Fr^*(Y)_f, C^*Fr^*(Y\wT{1})_f,\dots C^*Fr^*(Y\wT{l})_f\dots )$
satisfies the same properties as $M_{\PP^1}(Y)$ in theorem \ref{th:posomegaP1motfibrepl}.
\end{corollary}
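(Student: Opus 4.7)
The plan is to propagate the hypothesized affinewise $\A^1$-equivalence between $Fr^*$ and $Fr$ through the functors $C^*$ and the Nisnevich fibrant replacement $(-)_f$ in order to obtain a termwise motivic equivalence of the $\PP^1$-spectra $M_{\PP}^*(Y)_f$ and $M_{\PP}(Y)_f$, and then to deduce both conclusions of Theorem \ref{th:posomegaP1motfibrepl} by naturality.

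First, for each $l$, I would fix a zigzag of morphisms of presheaves between $Fr^*(Y\wT{l})$ and $Fr(Y\wT{l})$ whose restriction to $\mathrm{Sm}_S^{\mathrm{aff}}$ is a chain of $\A^1$-homotopy equivalences. For the concrete variants appearing in \eqref{eq:idZarHensnr} this is exactly what Proposition \ref{prop:Freq} provides after $\sigma$-stabilization; in the generality of the corollary it is the hypothesis itself. Applying the Suslin-style complex $C^*(\mathcal F)= \mathcal F(-\times\Delta^\bullet)$ converts each such affinewise $\A^1$-equivalence into a weak equivalence of pointed simplicial sets on every affine test scheme, since an $\A^1$-homotopy evaluated on $\Delta^\bullet$ is a simplicial homotopy.

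Next, I would promote this affinewise simplicial equivalence to a Nisnevich-stalkwise equivalence via the fibrant replacement $(-)_f$. The key point is that every Nisnevich stalk of a (pre)sheaf on $\Sm_S$ is computed on a hensel local $S$-algebra, and such a hensel local scheme is a cofiltered limit of affine étale neighbourhoods in $\Sm_S^{\mathrm{aff}}$. Since filtered colimits of pointed simplicial sets preserve weak equivalences, the affinewise weak equivalence of $C^*Fr^*(Y\wT{l})$ and $C^*Fr(Y\wT{l})$ passes to stalks, and hence the induced zigzag $M_{\PP}^*(Y)_f\leftrightarrow M_{\PP}(Y)_f$ is termwise a motivic equivalence of pointed simplicial Nisnevich sheaves.

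Finally, since a termwise motivic equivalence of $\PP^1$-spectra of pointed simplicial sheaves is a stable motivic equivalence in $\SH(k)$, the first conclusion $M_{\PP}^*(Y)_f\simeq \Sigma_{\PP}^\infty Y$ follows at once from the corresponding statement for $M_{\PP}(Y)_f$ in Theorem \ref{th:posomegaP1motfibrepl}. For the positive-degree $\Omega_{\PP^1}$-spectrum property I would place the adjoint structure maps of $M_{\PP}^*(Y)_f$ and $M_{\PP}(Y)_f$ into a commutative square with the termwise motivic equivalences as vertical arrows; two-out-of-three then forces the horizontal arrow for $M_{\PP}^*(Y)_f$ to be a motivic equivalence in each positive degree as well. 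The one genuinely substantive step is the passage from an $\A^1$-equivalence on affines to a motivic equivalence of Nisnevich sheaves, and this is precisely the technical heart of Proposition \ref{prop:Freq}; granted it, the present corollary reduces to a formal naturality argument.
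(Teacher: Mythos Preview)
Your approach is essentially the paper's: reduce to a single direction, apply $C^*$ to convert the affinewise $\A^1$-equivalence into a sectionwise simplicial weak equivalence on affines, observe this is a Nisnevich-local equivalence (you via stalks, the paper by the remark that affines detect Nis-equivalences), and then transfer both conclusions of Theorem \ref{th:posomegaP1motfibrepl} across $(-)_f$.

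One point worth tightening. Your two-out-of-three square handles the positive-degree $\Omega_{\PP^1}$-spectrum property, but Theorem \ref{th:posomegaP1motfibrepl} also asserts that each term $C^*Fr(Y\wT{l})_f$ is \emph{motivically fibrant}, and a termwise motivic equivalence alone does not transfer fibrancy. What you have in fact established is stronger: the comparison $C^*Fr^*(Y\wT{l})\to C^*Fr(Y\wT{l})$ is a Nisnevich-local equivalence, so after $(-)_f$ it becomes a \emph{sectionwise} weak equivalence between Nisnevich-fibrant objects, and that is precisely what the paper invokes to conclude that $C^*Fr^*(Y\wT{l})_f$ inherits motivic fibrancy. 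Make that step explicit rather than stopping at ``termwise motivic equivalence.''
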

\begin{proof}
It is enough to consider the case of $\A^1$-equivalences on affines
$Fr^*(Y\wT{l})\to Fr(Y\wT{l})$ or $Fr(Y\wT{l})\to Fr^*(Y\wT{l})$.
Then the morphism $M^*_{\PP^1}(Y)\to M_{\PP^1}(Y) $ (or $M_{\PP^1}(Y)\to M^*_{\PP^1}(Y)$) is a (term-wise) motivic equivalence. Hence 
$\Sigma^\infty_{\PP^1} Y\to M^*_{\PP^1}(Y)$ is a stable motivic equivalence, and  
$M^*_{\PP^1}(Y)$ (and $M^*_{\PP^1}(Y)_f$) is a positively $\Omega_{\PP^1}$-spectra of motivic spaces.

So to get the claim we need to check that $M^*_{\PP^1}(Y)$ is positively motivically fibrant.
By assumption the morphism $C^*Fr^*(Y\wT{l})\to C^*Fr(Y\wT{l})$ (or $C^*Fr(Y\wT{l})\to C^*Fr(Y\wT{l})$) is a section wise (simplicial homotopy) equivalence on affines for smooth affine $Y$. So it is Nis-equivalence for a smooth $Y$, and hence
$C^*Fr^*(Y\wT{l})_f\to C^*Fr(Y\wT{l})_f$ (or $C^*Fr(Y\wT{l})_f\to C^*Fr(Y\wT{l})_f$)
is (sectionwise simplicial homotopy) equivalence. Thus $C^*Fr^*(Y\wT{l})_f$ is positively motivically fibrant.
\end{proof}

\section{Smooth model for $Fr(Y)$ and $M_{\PP^1}$ (the first approach).}

In the section we present our first approach to the construction of the geometric models for the $\PP^1$-spectra $M_{\PP^1}(Y)$ (and $M_{\PP^1}(Y/U\wT{l})$).
The idea is to replace the presheaf $Fr(Y/U\wT{l})$ by the factor $Fr(Y\times\A^l)/Fr(U\times\A^l\cup Y\times{\A^l-0})$ or $Fr(Y\times\PP^l)/Fr(U\times\PP^l\cup Y\times{\PP^{l-1}})$ and 
use an appropriate model for the presheaves $Fr(Y)$, $Y\in \Sm_S$.

The advantages of our model for $Fr(Y)$ with respect to the model obtained in \cite{EHKSY-infloopsp}
are that our model for $Fr(pt)$ is equipped with the canonical (globally defined) etale map to an affine space, and
the structure morphism 
in $\mathbf H(S)$ of presheaves
$Fr(Y)\wedge (\PP^1,\infty)\to Fr(Y\wT{1})$
is representable by morphisms of schemes.
The representability of the Weil restriction functor is used in the present model for $Fr(Y)$ for of the same reason as in \cite{EHKSY-infloopsp}, namely to parametrize the regular maps $g\colon Z\to Y$ in the definition of framed correspondence.

\subsection{Standard idempotent framed corr.}

The replacement $Fr^{pol}$ extends the framing functions $\phi_i$, $i=1\dots n$, defining the map $V\to \A^n$ in the definition of framed correspondences to the maximal possible neighbourhood of the support $Z$, namely $\phi_i\in \cO(\A^n_X)$. 
But it leads to that the vanishing locus $W= Z(\phi_1,\dots \phi_n)$ could not be finite over $X$ itself in general, and even for the case of pairs $(Y,\emptyset)$, $l=0$, the vanishing locus $Z(s_1,\dots s_n)$ would intersect infinity $\PP^{n-1}_X$, for any $s_i\in \Gamma(\PP^n_X,\cO(d_i))$, $\phi_i=x_i/x_\infty^{d_i}$.

Nevertheless there is a modification of the definition such that $W$ is finite over $X$, and moreover all $W_i=Z(\phi_{i+1},\dots \phi_n)$ are finite over $\A^{i}_X$ under the projection $\A^n_X\to \A^i_X$ with respect to first $i$ coordinates, and furthermore $\phi_i$ are polynomials with leading terms defining empty vanishing locus on $\PP^{n-1}_X$. This property is useful with repsect to the representability question, since it guarantee that $Z$ is finite over $X$ by pure algebraically condition (and even linearly algebraically).

Since in this section we are concentrated on the case of pairs $(Y,\emptyset)$, $l=0$, 
the only one definition we actually need for the rest part of the section is def. \ref{def:stidfrcor};
in the definition \ref{def:idfrcor} we just write how to apply this approach for the general case.

\begin{definition}[standart idempotent framed corr. $Fr^\text{st-id}$]\label{def:stidfrcor}
\item[(st. id. corr.)]
Define $Fr^\text{st-id}_{n}(Y)$ as the sheaf 
with sections on $X\in \Sch_S$ given by the data 
$(Z,s_1,\dots s_n,g)$, 
$s_i\in \Gamma(\PP^n_X,\cO(3^{n-i}))$, $s_i\big|_{\PP^{n-1}_X}=t_i^{3^{n-i}}$, 
such that $Z(s_1\dots s_n)=Z\amalg \hat Z$ for some $\hat Z$, 
and $g\colon Z\to Y$.

\item[(hyperbolic corr.)]
Denote by $Fr^\text{st-hyp}_{n}(X,Y)\subset Fr^\text{st-id}_{n}(X,Y)$ consisting of such $(Z,s_1,\dots s_n,g)$ that $Z=Z(s_1,\dots s_n)$. 

\item[(stabilisation)]
Define the maps
\begin{gather*}
s_i^\prime =s_i(t_\infty^{2d_i}-s_i^2), d_i=3^{n-i}, \text{ for }1\leq i<n,
\\
\begin{array}{llllclc}
Fr^\text{st-hyp}_{n}(X,Y)&\to& Fr^\text{st-hyp}_{n+1}(X,Y)&\colon&
(s_1,\dots s_n)&\mapsto& (s_1^\prime,\dots s_n^\prime, t_{n+1})\\
Fr^\text{st-id}_{n}(X,Y)&\to& Fr^\text{st-id}_{n+1}(X,Y)&\colon&
(Z,s_1,\dots s_n,g)&\mapsto& (Z\times 0, s_1^\prime,\dots s_n^\prime, t_{n+1}, g)\\
\end{array}
\end{gather*}
Denote $Fr^\text{st-hyp}(X)=\varinjlim_{n}Fr^\text{st-hyp}_n(X)$,
$Fr^\text{st-id}(X,Y)=\varinjlim_{n}Fr^\text{st-id}_n(X,Y)$.
\end{definition}

To explain the place of this definition with more details let us give few more replacements of fr. corr.
The dashed arrows in the diagram exists only in the unstable level, 
and for the case of $U=\emptyset$ all arrow are  $\A^1$-Nis equivalences on the unstable level, but in general only un-dashed are so.
$$\xymatrix{
Fr^\text{st:id}\ar[r]\ar[dd]& Fr^\text{id}\ar@{-->}[dr]\ar[dd]\ar[drr] \\
&& 
Fr\ar[r] & Fr^\text{nr}\\
Fr^\text{st:e}\ar[r]& Fr^\text{e}\ar@{-->}[ur]\ar[urr] \\
}$$

\begin{definition}[idempotent framed corr. $Fr^\text{id}$]\label{def:idfrcor}
$\phantom{p}$\\
Define $Fr^\text{st-id}_{n}(Y)$ as the sheaf 
with sections on $X\in \Sch_S$ given by the data 
$(Z,\phi_1,\dots \phi_n,g)$, 
$Z\subset \A^n_X$ is finite, $\phi_i\in \cO(\A^n_X)$, $g\colon Z\to Y$,
such that $Z(\phi_1\dots \phi_n)=Z\amalg \hat Z$ for some $\hat Z$.
\\
Define $Fr^\text{id}_{n}(Y/U\wT{l})$ as the sheaf 
with sections on $X\in \Sch_S$ given by the data 
$(Z,\phi_1,\dots \phi_n,g)$, 
$Z\subset \A^n_X$ is finite, $\phi_i\in \cO(\A^n_X)$, $g\colon Z\to Y$,
such that $Z(\phi_1\dots \phi_n)=Z\amalg \hat Z$ for some $\hat Z$.
\end{definition}

\begin{definition}[(standart) equational framed corr. $Fr^{eq}$]\label{def:stidfrcor}
$\phantom{p}$\\
Define $Fr^\text{eq}_{n}(Y)$ as the pointed sheaf 
with sections on $X\in \Sch_S$ given by the data 
$(e,\phi_1,\dots \phi_n,g)$, 
$\phi_i\in \cO(\A^n_X)$,
$(e^2-e)\big|_{Z(\phi_1,\dots \phi_n)}=0$,
$g\colon Z(e,\phi_1,\dots \phi_n)\to Y$;
\\
Define $Fr^\text{eq}_{n}(Y\wT{l})$ as the pointed sheaf 
with sections on $X\in \Sch_S$ given by the data 
$(e_1,e_2,\phi_1,\dots \phi_n,\beta)$, 
$\phi_i\in \cO(\A^n_X)$,
$(e_1^2-e_1)\big|_{Z(\phi_1,\dots \phi_n)}=0$,
$g\colon Z(e,\phi_1,\dots \phi_n)\to Y$;
$(e_2^2-e_2)\big|_{\beta^{-1}(0\times(Y\setminus U))}=0$;
\\
Define $Fr^\text{st-eq}_{n}(Y)$ as the pointed sheaf 
$s_i\in \Gamma(\PP^n_X,\cO(3^{n-i}))$, $s_i\big|_{\PP^{n-1}_X}=t_i^{3^{n-i}}$, 
and $e\in \cO(\A^n_X)$
such that $(e^2-e)\big|_{Z(s_1\dots s_n)}=0$,
and $g\colon Z(e,s_1,\dots s_n)\to Y$.\item[(standard equational corr.)]
Define $Fr^\text{st-eq}_{n}(Y)$ as the pointed sheaf 
with sections on $X\in \Sch_S$ given by the data 
$(e,s_1,\dots s_n,g)$, 
$s_i\in \Gamma(\PP^n_X,\cO(3^{n-i}))$, $s_i\big|_{\PP^{n-1}_X}=t_i^{3^{n-i}}$, 
and $e\in \cO(\A^n_X)$
such that $(e^2-e)\big|_{Z(s_1\dots s_n)}=0$,
and $g\colon Z(e,s_1,\dots s_n)\to Y$.

\item[(stabilisation)]
Define the maps
\begin{gather*}
s_i^\prime =s_i(t_\infty^{2d_i}-s_i^2), d_i=3^{n-i}, \text{ for }1\leq i<n,
\\
\begin{array}{llllclc}
Fr^\text{st-eq}_{n}(X,Y)&\to& Fr^\text{st-eq}_{n+1}(X,Y)&\colon&
(e,s_1,\dots s_n,g)&\mapsto& (e, s_1^\prime,\dots s_n^\prime, t_{n+1}, g)\\
\end{array}
\end{gather*}
Denote $Fr^\text{st-hyp}(X)=\varinjlim_{n}Fr^\text{st-hyp}_n(X)$,
$Fr^\text{st-id}(X,Y)=\varinjlim_{n}Fr^\text{st-id}_n(X,Y)$.
\end{definition}

\begin{proposition}\label{lm:Freq1}
For an affine $Y\in \Sm_S$ and open $U\subset Y$ 
there are natural $\A^1$-equivalences of sheaves
$Fr^\text{st-id}(Y)\to Fr(Y)$, and
$Fr^\text{eq}(Y/U\wT{l})\to Fr^\text{id}(Y/U\wT{l})\to Fr(Y/U\wT{l})$.

For any $Y\in \Sm_S$ and open $U\subset Y$  there is natural $\A^1$-Nis-equivalence of motivic spaces 
$Fr^\text{st-id}(Y)\to Fr(Y)$, 
and equivalences 
$Fr^\text{eq}(Y/U\wT{l})\to Fr^\text{id}(Y/U\wT{l})\to Fr(Y/U\wT{l})$.
\end{proposition}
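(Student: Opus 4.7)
The plan is to establish the chain of $\A^1$-equivalences by constructing, in each case, an explicit homotopy inverse after $\sigma$-stabilisation and then verifying that the induced map on $\pi_0(C^*(-)(X))$ is a bijection for affine $X$. The argument for the $\A^1$-Nis equivalence of motivic spaces in the non-affine case will then be deduced from the affine-sectionwise case by Čech descent (invoking the Čech covering lemma advertised as \texttt{lm:Checkcov}) combined with the affine-neighbourhood hypothesis on finite subsets of $Y$.

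First, I would treat the map $Fr^\text{eq}(Y/U\wT{l})\to Fr^\text{id}(Y/U\wT{l})$: given a datum $(e_1,e_2,\phi_1,\dots,\phi_n,\beta)$ the polynomials $\phi_i$ on $\A^n_X$ already define a closed subscheme $Z(\phi_1,\dots,\phi_n)$, and the idempotent $e_1$ cuts out a clopen part that I call $Z$. The hypothesis $(e_1^2-e_1)|_{Z(\phi_1,\dots,\phi_n)}=0$ gives exactly $Z(\phi_1,\dots,\phi_n)=Z\amalg \hat Z$, producing an idempotent correspondence. For the reverse, one notes that an idempotent correspondence supplies a canonical idempotent in the local ring of $\A^n_X$ at $Z(\phi_1,\dots,\phi_n)$; after $\sigma$-stabilisation (adding an extra affine coordinate) this idempotent can be lifted to a global polynomial $e\in\cO(\A^{n+1}_X)$ by a Nakayama-plus-partition-of-unity argument using that the two clopen pieces are disjoint closed subschemes. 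The two candidate lifts differ by a nilpotent on the vanishing locus, which gives a canonical $\A^1$-homotopy connecting them (linear interpolation), showing the map is a bijection on $\pi_0 C^*$.

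Next, for $Fr^\text{id}\to Fr$: given $(Z,\phi_1,\dots,\phi_n,g)$ in $Fr^\text{id}$, set $V=\A^n_X\setminus\hat Z$, which is a Zariski, hence etale, neighbourhood of $Z$, and define $\alpha=(\phi_1,\dots,\phi_n,\text{aux},g|_Z)$ using the affineness of $Y$ to extend $g$ from $Z$ to $V$ (possibly after a further stabilisation). Surjectivity on $\pi_0 C^*$ is the nontrivial direction: one needs to modify an arbitrary $(Z,V,\alpha)$ with $V$ only etale, so that after $\sigma$-stabilisation the etale neighbourhood becomes open. This is achieved by the standard trick of representing $V\to\A^n_X$ around $Z$ as a graph of an etale-local section and absorbing this section into an additional coordinate via the $\sigma$-map; I would cite the (now classical) argument from the proof of \cite[cor. 2.2.21]{EHKSY-infloopsp} adapted to our case with pairs.

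For $Fr^\text{st-id}\to Fr^\text{id}\to Fr$ restricted to $U=\emptyset$, $l=0$, the middle step is already covered, and for the first step it suffices to observe that a standard section $s_i\in\Gamma(\PP^n_X,\cO(3^{n-i}))$ with leading term $t_i^{3^{n-i}}$ produces $\phi_i=s_i/t_\infty^{3^{n-i}}$ on $\A^n_X$ whose vanishing locus does not meet the infinity divisor and hence is finite over $X$; conversely, given polynomials $\phi_i$ of arbitrary degree, an iterated application of the stabilisation $s_i\mapsto s_i(t_\infty^{2d_i}-s_i^2)$, which is homotopic to $s_i$ by the linear path $s_i(t_\infty^{2d_i}-ts_i^2)$, brings their leading terms into the required form $t_i^{3^{n-i}}$. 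The homotopy respects the decomposition $Z\amalg \hat Z$ throughout, so the construction passes to $\pi_0 C^*$.

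Finally, the globalisation from affine $Y$ to arbitrary smooth $Y$ follows the scheme of proposition \ref{prop:Freq}: all the sheaves in sight are Nisnevich sheaves, the maps above are natural in $Y$, and using lemma \ref{lm:Checkcov} one covers $Y$ by affines so that each $g\colon Z\to Y$ factors (thanks to the finite-set-affine-neighbourhood assumption) through some affine open $Y'\subset Y$, reducing a sectionwise statement on affines to an $\A^1$-Nis equivalence of sheaves. The main obstacle I anticipate is the globalisation-of-the-idempotent step in the $Fr^\text{eq}\leftrightarrow Fr^\text{id}$ comparison: verifying that the higher simplicial-homotopy ambiguity of the chosen lift of $e$ is killed after $C^*$ and $\sigma$-stabilisation, rather than only its $\pi_0$; the clean way I know to handle this is to argue that the space of such lifts is contractible (being affine-space fibered over the chosen $Z\amalg \hat Z$), and that contractibility is preserved by $C^*$.
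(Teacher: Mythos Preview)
Your overall architecture (prove an affine-sectionwise statement first, then globalise by \v{C}ech descent via Lemma~\ref{lm:Checkcov}) matches the paper's, but the core mechanism you propose does not, and as written it has a genuine gap.

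The gap is that checking bijectivity on $\pi_0(C^*(-)(X))$ is not enough for an $\A^1$-equivalence: you need $C^*(F)(X)\to C^*(G)(X)$ to be a weak equivalence of simplicial sets, i.e.\ an isomorphism on \emph{all} homotopy groups, not just $\pi_0$. You yourself flag this at the end (``higher simplicial-homotopy ambiguity''), and the contractibility sketch you offer for the space of idempotent lifts is not developed; in particular, for the step $Fr^\text{id}\to Fr$ you only argue surjectivity on $\pi_0 C^*$ and give no treatment of higher $\pi_n$. If instead you genuinely produced maps $g$ with presheaf-level $\A^1$-homotopies $f\circ g\sim\sigma^m$ and $g\circ f\sim\sigma^m$, that would suffice (this is Lemma~\ref{lm:HomEq}); but your write-up switches mid-stream from ``explicit homotopy inverse'' to ``$\pi_0$ bijection'', and the latter is strictly weaker.

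The paper avoids this entirely by using the lifting criterion of Proposition~\ref{prop:LiftCriteria}: once one checks that the source and target satisfy closed glueing and that the map has the right lifting property with respect to closed embeddings of affines $X_0\hookrightarrow X$, it follows (via \cite[Lemma~A.2.6]{EHKSY-infloopsp}) that $C^*(F)(X)\to C^*(G)(X)$ is a trivial Kan fibration on affines, hence a weak equivalence in all degrees at once. Concretely, the paper factors $Fr^\text{st-id}(Y)\to Fr(Y)$ through the globally-normally-framed variant $Fr^\text{g-nr}$ and proves the lifting property for $Fr^\text{st-id}\to Fr^\text{g-nr}$ directly: given $(Z,W,\tau,g)\in Fr^\text{g-nr}_n(X,Y)$ and a lift over $X_0$, one invokes Lemma~\ref{lm:EqforNeigh} to realise an \'etale neighbourhood of $W\amalg_{X_0}Z_0$ as a component of a vanishing locus $Z(r_1,\dots,r_m)$ with the prescribed leading terms, then extends the sections $s_i$ compatibly with $s_i^0$ and $\tau$ by Serre vanishing; this produces an element of $Fr^\text{st-id}_{n+m+b}(X,Y)$ lifting $\sigma^{m+b}c$. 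The equivalence $Fr^\text{id}(Y/U\wT{l})\to Fr(Y/U\wT{l})$ (again via $Fr^\text{g-nr}$) is handled the same way, and for $Fr^\text{eq}\to Fr^\text{id}$ the lifting property is an immediate consequence of the Chinese remainder theorem: given $Z(\phi)=Z\amalg\hat Z$ over $X$ and a global $e_0$ over $X_0$, CRT produces $e\in\cO(\A^n_X)$ with $e|_Z=0$, $e|_{\hat Z}=1$, $e|_{\A^n_{X_0}}=e_0$, no stabilisation or Nakayama argument needed. This is both shorter than your route and sidesteps the higher-homotopy issue you were worried about.
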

\begin{proof}
Clearly the sheaf $Fr^\text{st-id}(Y)$ satisfy the closed glueing.
By lemma \ref{lm:Freq} and proposition \ref{prop:LiftCriteria}, to prove the first claim of the lemma
it is enough to prove the lifting property with respect to closed embeddings of affines for the morphism $Fr^\text{st-id}\to Fr^\text{g-nr}$.

Let $c=(Z,W,\tau,g)\in Fr^\text{g-nr}_n(X,Y)$, $(Z_0,s_1^0,\dots s_n^0,g^0)\in Fr^\text{st-id}_n(X_0,Y)$ define the element in 
$Fr^\text{g-nr}_n(X,Y)\times Fr^\text{g-nr}_n(X_0,Y) Fr^\text{st-id}_n(X_0,Y)$.

Applying lemma \ref{lm:EqforNeigh} 
we can get for all large enough $b$ 
an etale neighbourhood 
$r_{i}\in \Gamma(\PP^{n+m},\cO(d_{n+i}))$, $d_{n+i}=3^{r+b-i}$, $i=1\dots m$,
$r_{i}\big|_{\PP^{n+m}}=t_{n+i}^{d_{n+i}}$,
$C\subset Z(f_1,\dots f_m)\subset \A^{n+m+b}_X$,
where $f_i\in cO(\A^{n+m+b}$ is the inverse image of $r_i/t_\infty^{d_{n+i}}$ under the projection,
such that
$Z(f_1\,dots f_m)-C\to \A^n_X$ is an etale neighbourhood of $W\amalg_{X_0\times_X Z}Z_0$.

Then for all large enough $b$ there are sections $s_i\in \Gamma(\PP^{n+m+b},\cO(d_i))$, $d_{n+i}=3^{n-i}$,
$s_{i}\big|_{\PP^{n+r+b}}=t_{i}^{d_i}$, $s_i/t_\infty^{d_i}\big|_{Z_0}=s_i^0$, $s_i/t_\infty^{d_i}\big|_{Z(I^2(W))}$ is agreed with $\tau$.
Now 
$(s_1,\dots s_n,s_{n+1},\dots s_{n+m},t_{n+m+1}(1- t_{n+m+1}^{3^{b-1}}-1),\dots t_{n+m+b-1}(1-t_{n+m+b-1}^2),t_{n+m+b},g)\in Fr^\text{st-id}_{n+m+b}(X,Y)$ is the required lift of $\sigma^{m+b}c$.
,where $s_{n+i}$ are sections suc that $s_{n+i}/t_{\infty}^{d_{n+i}}$ is equal to the inverse image of $f_{i}$. 

In a similar way we get the equivalence 
$Fr^\text{id}(Y/U\wT{l})\to Fr^\text{g-nr}(Y/U\wT{l})$.
The equivalence $Fr^\text{eq}(Y/U\wT{l})\to Fr^\text{id}(Y/U\wT{l})$ in view of proposition \ref{prop:LiftCriteria} follows immediate from Chinese remainder theorem. 
\end{proof}

\subsection{ind-smooth model for $Fr_n(Y)$}

According to the above definition we get the sequence of forgetful functors
$$Fr^\text{st:id}(Y)\to Fr^\text{st:id}(pt)\to Fr^\text{st:hyp}(pt),$$ 
where the first one is just the composition with the canonical morphism $Y\to pt$ and the second one forgets the choice of the disjoint component.

\begin{proposition}\label{prop:st:id:Model}
For any $Y\in \Sm_S$, such that any finite set of points in $Y$ has an affine neighbourhood,
the sheaves $Fr^\text{st:id}_{n}(Y)$, %
and $Fr^\text{st:hyp}_{n}(Y)$ are represented in $\Sm_S$ by smooth schemes
$\mathcal Fr^\text{st:id}_{n}(Y)$, 
$\mathcal Fr^\text{st:hyp}_{n}(pt)$, 
such that 
there is a sequence of morphisms 
\begin{equation}\label{eq:Freseqsmet}
\mathcal Fr^\text{st:id}_{n}(Y)\to\mathcal Fr^\text{st:id}_{n}(pt)\to\mathcal Fr^\text{st:hyp}_{n}(pt)\simeq \A^{N_{n}}
\end{equation}
with the first morphism being smooth and the second one being etale.

The natural isomorphisms 
$Fr^\text{st:id}(Y)=\varinjlim_n \mathcal Fr^\text{st:id}_{n}(Y)$, and $Fr^\text{st:hyp}(pt)=\varinjlim_n \mathcal Fr^\text{st:hyp}_{n}(pt)$ are compatible with the morphisms in sequence \eqref{eq:Freseqsmet}.
\end{proposition}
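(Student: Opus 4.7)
The plan is to build the three schemes in the chain from the inside out, following the data in Definition \ref{def:stidfrcor}. The rightmost object $\mathcal{Fr}^{\text{st:hyp}}_n(pt)$ is the most elementary: a point of it is an $n$-tuple $(s_1,\dots,s_n)$ with $s_i\in \GlS{n}{3^{n-i}}$ satisfying the affine condition $s_i|_{\PP^{n-1}}=t_i^{3^{n-i}}$. Since the restriction map $\GlS{n}{d}\to \GlS{n-1}{d}$ is a surjection of free modules whose kernel is the space of sections divisible by $t_\infty$, the locus of valid $s_i$ is a torsor under this kernel. Taking the product over $i$ identifies $\mathcal{Fr}^{\text{st:hyp}}_n(pt)$ with the affine space $\A^{N_n}$, with $N_n=\sum_{i=1}^n\binom{n+3^{n-i}-1}{n}$.

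Next, I want to produce a universal finite flat family $\pi\colon \mathcal Z\to \A^{N_n}$, where $\mathcal Z \subset \A^n\times \A^{N_n}$ is cut out by the universal $(s_1,\dots,s_n)$ dehomogenised at $t_\infty$. Finiteness follows from the leading-term condition: the projectivised zero locus $Z(s_1,\dots,s_n)\subset \PP^n\times \A^{N_n}$ meets $\PP^{n-1}_\infty$ in $Z(t_1^{d_1},\dots,t_n^{d_n})=\emptyset$, so the closure of $\mathcal Z$ in $\PP^n\times\A^{N_n}$ coincides with $\mathcal Z$ and is therefore proper and affine, i.e.\ finite. Flatness follows because the Koszul complex on $(s_1,\dots,s_n)$ is exact: each $s_i$ is a non-zero-divisor on the universal quotient since $s_1,\dots,s_n$ cut out the expected codimension on every fibre (the fibres are zero-dimensional by the leading-term argument).

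For the middle scheme, $\mathcal{Fr}^{\text{st:id}}_n(pt)$ classifies pairs (universal data, clopen decomposition $\mathcal Z=Z\amalg\hat Z$). Such a clopen decomposition over a base is equivalent to an idempotent $e\in \pi_*\cO_{\mathcal Z}$, and because $\pi_*\cO_{\mathcal Z}$ is a locally free sheaf of algebras of finite rank, the functor of idempotents is represented by a scheme étale over $\A^{N_n}$ (the equation $e^2=e$ on a free algebra defines an étale cover — this is precisely the Hilbert scheme of clopen subschemes of a finite flat family, see e.g.\ the standard étaleness of the "finite-étale-covers" functor). Restricting to decompositions where $Z$ is finite gives an open subscheme; it is étale over $\A^{N_n}$ and represents $Fr^{\text{st:id}}_n(pt)$.

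Finally, for the leftmost object I will take the Weil restriction $\mathcal{Fr}^{\text{st:id}}_n(Y)=\mathrm{Res}_{Z_\text{univ}/\mathcal{Fr}^{\text{st:id}}_n(pt)}(Y\times Z_\text{univ})$ parametrising $g\colon Z\to Y$. Representability by a scheme uses exactly the affine-neighbourhood-of-finite-subsets hypothesis on $Y$: under this hypothesis the Weil restriction of a smooth scheme along a finite locally free morphism is representable and smooth (this is the key input already invoked by \cite{EHKSY-infloopsp} for their model of $Fr^{\text{nr}}(Y)$). Smoothness of the Weil restriction along a finite flat $Z_\text{univ}\to\mathcal{Fr}^{\text{st:id}}_n(pt)$ when $Y$ is smooth is a standard fact (BLR, \S7.6): it follows from the infinitesimal lifting criterion applied fibrewise to $Y$. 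Compatibility of the isomorphisms $Fr^{\text{st:id}}(Y)=\varinjlim \mathcal{Fr}^{\text{st:id}}_n(Y)$ with the stabilisation maps of Definition \ref{def:stidfrcor} amounts to checking, term-by-term, that the prescribed change of coordinates $s_i\mapsto s_i(t_\infty^{2d_i}-s_i^2)$ and $s_{n+1}=t_{n+1}$ lifts to a morphism of the representing schemes, which is immediate. The main conceptual hurdle is step (4)—namely invoking representability and smoothness of the Weil restriction—while the main technical step is verifying finite flatness of the universal $\mathcal Z$ in step (2).
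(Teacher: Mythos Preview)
Your proof is correct and follows essentially the same approach as the paper: identify $\mathcal{Fr}^{\text{st:hyp}}_n(pt)$ with an affine space via the prescribed-leading-term condition, obtain $\mathcal{Fr}^{\text{st:id}}_n(pt)$ as the scheme of idempotents in the finite locally free algebra $\pi_*\cO_{\mathcal Z}$ (the paper isolates this as Lemma~\ref{lm:etid}), and realise $\mathcal{Fr}^{\text{st:id}}_n(Y)$ as a Weil restriction along the universal $Z$, citing BLR \S7.6 for smoothness. Your treatment is in fact slightly more explicit than the paper's in two places---you spell out the Koszul/regular-sequence argument for finite flatness of $\mathcal Z\to\A^{N_n}$, and you note that the affine-neighbourhood-of-finite-sets hypothesis on $Y$ is precisely what is needed for representability of the Weil restriction---while the paper simply asserts these. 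One small redundancy: your remark about ``restricting to decompositions where $Z$ is finite'' is unnecessary, since $Z(s_1,\dots,s_n)$ is already finite over the base by your leading-term argument, so every clopen piece is automatically finite.
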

\begin{proof}
Denote $N_n=\sum_{i=1}^{n}(\dim \Gamma(\PP^n,\cO(3^{n-i}))-\dim \Gamma(\PP^{n-1},\cO(3^{n-i})))$.
Then since by definition \ref{def:stidfrcor} 
the elements of $Fr^\text{st:hyp}_{n}(X,pt)$ are the sets of sections 
$(s_1,\dots s_n)$, 
$s_i\in \Gamma(\PP^n_X,\cO(3^{n-i}))$, $s_i\big|_{\PP^{n-1}_X}=t_i^{3^{n-i}}$,
there is a one-to-one correspondences between $Fr^\text{st:hyp}_{n}(X,pt)$ and the $X$-points of $\A^{N_{n}}$.

Next, if we are given with the element in $Fr^\text{st:hyp}_{n}(X,pt)$ all what we need to define an element
in $Fr^\text{st:id}_{n}(X,pt)$ is to choose a disjoint component of the schemes $Z(s_1,\dots s_n)$. 
Since $Z(s_1,\dots s_n)$ is finite and flat over $X$ and $\mathcal O(Z(s_1,\dots s_n))\simeq \mathcal O(X)^{d}$ for some $d$ 
the second arrow is represented by an etale morphism by lemma \ref{lm:etid}.

The first arrow in \ref{eq:Freseqsmet} is the Weil restriction functor and it is represented by smooth morphism by \cite[section 7.6, proposition 5(h)]{}. 
\end{proof}

\begin{lemma}\label{lm:etid}
Let $f\colon Z\to X$ be a finite locally free morphism. Denote by $V\to X$ the total space of the vector bundle over $X$ defined by the coherent sheaf $A=f_*(\cO(Z))$. Denote by $Id(Z)\to X$ the closed subscheme in $V$ defined by the equation $e^2-e=0$, where $e^2$ is the square with respect to the algebra structure on the sheaf $A$.
Then the morphism $Id(Z)\to X$ is etale.
\end{lemma}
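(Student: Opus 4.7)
The plan is to verify \'etaleness by the Jacobian criterion after a local trivialization. Since the assertion is local on $X$, I would assume $X=\Spec R$ and pick a free basis $a_1,\dots,a_d$ of $A=f_*\cO(Z)$ as an $R$-module. Then $V=\Spec R[x_1,\dots,x_d]$, and the universal element of $A$ over $V$ is $e=\sum x_i a_i$. Writing $e^2-e=\sum_{j=1}^d F_j\,a_j$ expresses $Id(Z)$ as the closed subscheme of $V$ cut out by the $d$ polynomial equations $F_1=\dots=F_d=0$ in $d$ variables.

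The next step is to compute the Jacobian matrix $J=(\partial F_j/\partial x_i)$. By the chain rule applied to $e\mapsto e^2-e$ in the commutative algebra $A$, the $\cO_X$-linear derivative of this map at an idempotent value of $e$ is $\delta\mapsto (2e-1)\,\delta$. Written out in the basis $a_1,\dots,a_d$, this gives exactly $J$ as the matrix of multiplication by $2e-1$ on $A$.

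The key observation is the standard identity on the idempotent locus: if $e^2=e$ then
\[
(2e-1)^2 = 4e^2-4e+1 = 4e-4e+1 = 1.
\]
Hence, restricted to $Id(Z)$, the element $2e-1\in A\otimes_R \cO(Id(Z))$ is its own inverse, so multiplication by it is an $\cO(Id(Z))$-linear automorphism of $A\otimes_R \cO(Id(Z))$. In particular $\det J$ (which equals the norm $N_{A/R}(2e-1)$) is a unit on $Id(Z)$.

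Thus $Id(Z)\to X$ is cut out locally by a regular sequence of length $d$ in a smooth $X$-scheme of relative dimension $d$, and the Jacobian has unit determinant on the vanishing locus; the Jacobian criterion then gives \'etaleness. The only mild subtlety is choosing the basis so that one really gets $d$ equations and confirming that the derivative of $e\mapsto e^2$ in the commutative algebra $A$ is the multiplication-by-$2e$ map; both are routine once one works in the chosen trivialization.
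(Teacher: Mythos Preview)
Your proof is correct, and the heart of it---that the differential of $e\mapsto e^2-e$ is multiplication by $2e-1$, and that $(2e-1)^2=1$ on the idempotent locus---is exactly the same observation the paper uses. The difference is in how \'etaleness is concluded from this. You apply the Jacobian criterion directly: $d$ equations in $d$ variables with invertible Jacobian on the zero locus gives \'etaleness in one stroke. The paper instead separates the two ingredients: it first argues quasi-finiteness (only finitely many idempotents in a finite algebra over a field), deduces flatness from a miracle-flatness style argument (the map $e\mapsto e^2-e$ on the smooth scheme $V$ is quasi-finite near the zero section, hence flat there), and then shows unramifiedness by checking $\Omega_{Id(Z)/X}=0$ via the vanishing of $Z(e^2-e,2e-1)$; it also adds a remark on descent of idempotents along inseparable extensions. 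Your route is shorter and avoids the separate flatness step entirely, since the Jacobian criterion bundles flatness and unramifiedness together; the paper's route makes the quasi-finiteness visible as an independent fact but is more circuitous.
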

\begin{proof}

Firstly let's note that $Id(Z)$ is quasi-finite over $X$, since 
there is only finite set of idempotents in a finite dimensional algebra over a filed. 
Then it follows that 
the endo-morphism $w\colon V\to V\colon e\mapsto e^2-e$ is quasi-finite over some neighbourhood $W$ of the zero section $0_X\subset V$. Since $V$ is smooth over $X$ it follows that $w$ is flat over $W$. 
Hence $Id(Z)$ is flat over $X$.

Since the algebra $A$ is commutative the differential $d(e^2-e)$ is equal to $2e-1$. 
Since the vanish locus $Z(e^2-e,2e-1)\subset Z(e^2,2e-1)$ is empty, $\Omega_{Id(Z)/X}=0$. Now since idempotents satisfy the non-separable descent, 
it follows that the morphism $V\to X$ is unramified. 

(Comment of the non-separable descent:
Since the disjoint components of the reduced subscheme are the same as reduced subschemes of the disjoint components, it follows that idempotents in the finite dimensional algebra over any filed satisfy the descent with respect to local artin algebras. Since the number of disjoint components of the product is equal to the product of numbers of the disjoint components, it follows that the descent with respect to local artin algebras implies the descent with respect to non-separable extensions.)
\end{proof}

\subsection{Construction of the Ind-smooth models for $Fr(Y\wT{l})$ and $M_{\PP^1}(Y)$.}

\newcommand{\Shvd}{{{Shv}_{\bullet}}}

Here we give the the general construction of the smooth model for $M_{\PP^1}(Y/U)$ and a sketch of the proof.
\begin{lemma}\label{lm:ConeAPrepl}
Assume that the base scheme $S$ is of a finite Krull dimension.
There are motivic equivalences of pointed Nisnevich sheaves
$$\begin{array}{llll}
Fr(Y\wedge T^l)= &Fr(Y\times(\A^l/(\A^l-0) )\simeq &
Fr(Y\times(\A^l//(\A^l-0) ),
\\
Fr(Y\wedge T^l)= &Fr(Y\wedge(\PP^l/(\PP^l-0) )\simeq&
Fr(Y\times(\PP^l//(\PP^l-0)) )\simeq&
Fr(Y\times(\PP^l//\PP^{l-1} )),
\\
Fr(Y\wedge T^l)= &Fr(Y\times(\PP^1/(\PP^1-0))^\wedge{l} )\simeq&
Fr(Y\times(\PP^1//\infty)^\wedge{l} ).
\end{array}
$$
Moreover these morphisms being restricted to affines became $A^1$-equivalences.
\end{lemma}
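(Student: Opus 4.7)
The plan is to reduce all three lines of claimed equivalences to a single principle: for an open embedding $U \hookrightarrow V$ of smooth pointed $S$-schemes, the canonical morphism from the mapping cone $V // U$ (obtained by glueing $U \times \A^1$ to $V$ along $U \times \{0\}$ and collapsing $U \times \{1\}$ to the basepoint) to the quotient sheaf $V/U$ is an $\A^1$-Nis-equivalence of pointed sheaves, and the functor $Fr(Y \times -)$ preserves this equivalence as a motivic equivalence of pointed Nisnevich sheaves (and an $\A^1$-equivalence on affine sections). Granted this principle, the first line is immediate; the second line follows by additionally invoking the excision identification $\PP^l/\PP^{l-1} \simeq \A^l/(\A^l - 0)$ arising from the Nisnevich square covering $\PP^l$ by $\A^l$ and $\PP^l \setminus 0$; and the third line follows by iterating the one-variable case at $l = 1$ and smash-multiplying, which commutes with $Fr$ up to motivic equivalence by Voevodsky's lemma.

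To prove the key principle I would use the Voevodsky lemma recalled right after Definition~\ref{def:frcor}, which identifies $Fr_n(Y \wedge W)$ with Nisnevich pointed morphisms from $\PP^n_X/\PP^{n-1}_X$ to $(\A^n/(\A^n-0)) \wedge Y \wedge W$. The cone-to-quotient morphism kills the added reduced cone on $U$, which is $\A^1$-contractible via the scaling homotopy $(s,(t,u)) \mapsto (st,u)$ on $\A^1 \times U$. Plugging this contraction into Voevodsky's identification and passing through the colimit defining $Fr = \varinjlim_n Fr_n$ produces the required motivic equivalence. On affine sections, the same scaling homotopy lifts literally to an explicit $\A^1$-homotopy between the composite $Fr(Y \times (V//U)) \to Fr(Y \times (V/U)) \to Fr(Y \times (V//U))$ (using the obvious section that inserts $t = 0$ in the added cone direction) and the identity, after a single $\sigma^*$-stabilization which absorbs the contracting $\A^1$-direction into the ambient affine space of the framed data.

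The finite Krull dimension assumption on $S$ enters only to guarantee that the cohomological dimension bound needed for Nisnevich sheafification is finite, so that the filtered colimits involved in $Fr$ commute with Nisnevich stalks. The main obstacle will be the bookkeeping across several non-isomorphic but motivically equivalent models of $T^l$ and ensuring that the constructions commute with smash products (for the third line) and with excision (for the second line). The Nisnevich-descent property of $Fr$ implicit in Voevodsky's lemma is essential here, since it lets us commute $Fr$ past the pushouts defining quotient sheaves and mapping cones; in particular smashing with a fixed factor $Y$ intertwines cleanly with the Hom-sheaf description of $Fr_n$, so iteration in the third line does not introduce new difficulties beyond the one-variable case.
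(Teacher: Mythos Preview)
Your overall plan---reduce to a cone-versus-quotient comparison and treat the second and third rows as variants---is reasonable, but the core step has a gap, and the paper's route is genuinely different.

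The paper does not attempt to show that $Fr$ preserves the $\A^1$-equivalence $V//U\to V/U$ directly. Instead it passes through the first-order variant $Fr^{1th}$ (Definition~\ref{def:firstordfrcor}), using the $\A^1$-equivalence $Fr\simeq Fr^{1th}$ established in Proposition~\ref{prop:Freq}. For $Fr^{1th}$ the comparison $Fr^{1th}(Y\times(\A^l//(\A^l-0)))\simeq Fr^{1th}(Y\times(\A^l/(\A^l-0)))$ is a \emph{Nisnevich} equivalence, not merely an $\A^1$-equivalence, and this is what the paper calls ``straightforward''. The point is that in $Fr^{1th}$ the map $\alpha$ is only specified on the first-order thickening $Z_2$, so the comparison between the cofiber of $Fr^{1th}(U)\to Fr^{1th}(V)$ and $Fr^{1th}(V/U)$ can be checked on henselian local sections without invoking any $\A^1$-homotopy. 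For the last equivalence in the second row the paper uses that $Fr(Y\times(\PP^l-0))\to Fr(Y\times\PP^{l-1})$ is an $\A^1$-equivalence via the explicit homotopy criterion of Lemma~\ref{lm:HomEq} (the projection $\PP^l-0\to\PP^{l-1}$ is an affine bundle), not an excision argument.

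The gap in your argument is the ``obvious section that inserts $t=0$''. There is no map of pointed sheaves $V/U\to V//U$ sending $v\mapsto v$: the inclusion $V\hookrightarrow V//U$ takes $U$ to the base of the cylinder, not to the collapsed point, so it does not factor through $V/U$. Consequently your proposed composite $Fr(Y\times(V//U))\to Fr(Y\times(V/U))\to Fr(Y\times(V//U))$ is not defined, and the homotopy you want to build has no target. More broadly, Voevodsky's lemma exhibits $Fr_n$ as an internal $\mathcal{H}om$ from a fixed source, and such functors do not preserve $\A^1$-equivalences in the target variable without further input; the claim that the scaling homotopy ``lifts literally'' after one $\sigma^*$-stabilization would need a concrete construction at the level of the data $(Z,V,\phi,g)$, and scaling the $\A^l$-coordinate moves the support $Z$, so this is not automatic. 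The paper sidesteps all of this by changing the flavor of framed correspondences to one where the comparison is Nisnevich-local rather than $\A^1$-homotopical.
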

\begin{proof}
The first isomorphism follows by the precise computation. 

The second equivalence 
$Fr(Y\times(\A^l//(\A^l-0) )\simeq Fr(Y\times(\A^l/(\A^l-0) )$
follows from the Nisnevich equivalence 
$Fr^{1st}(Y\times(\A^l//(\A^l-0) )\simeq Fr^{1st}(Y\times(\A^l/(\A^l-0) )$, which is straightforward. And similarly we get the second equivalences in other rows.

The last isomorphism in the second row follows since $Fr(Y\times (\PP^l-0) ) \to
Fr(Y\times\PP^{l-1} )$ is an $\A^1$-equivalence (by cirteria \ref{lm:HomEq}).
\end{proof}

\begin{remark}
In the above proof we do not use the equivalence $M_{fr}(Y/U)\simeq M_{\PP^1}(Y//U)$ in general case.

The second equivalences actually follows from the constructions and arguments form \cite{GNP16} and \cite{MotSmAfPair}.
Let us note that results in \cite{GNP16} and \cite{MotSmAfPair} is formulated for the case of sheaves of abelian groups because of the aim of the computation of the framed motive, but the arguments can be translated to the case of sheaves of sets.

Namely the equivalence is given by Nisnevich equivalence $Fr(Y\times(\A^l//(\A^l-0) )\simeq Fr^{qf}(Y\times(\A^l/(\A^l-0) )$,
and $\A^1$ equivalence $Fr^{qf}(Y\times(\A^l/(\A^l-0) )\simeq Fr(Y\times(\A^l/(\A^l-0) )$. 
where $Fr^{qf}(X,Y/U\wT{l})$ are correspondences such that the subscheme $Z(\phi)$ is quasi-finite over $X$, where $\phi\colon V\to \A^n$ is framing functions.  
Actually, the first morphism induces the simplicial equivalences on the section on henselian local schemes. 
The $\A^1$-homotopy inverse for the second morphism is
the morphism of the telescope simplicial set corresponding to the filtrations on $Fr^*(Y\times(\A^l/(\A^l-0) )$ defined similarly as the ones in the proof of \cite[theorem 6]{MotSmAfPair} in view of \cite[proposition 3]{MotSmAfPair}.
\end{remark}

\begin{remark}
\end{remark}

Let $Y\in \Sm_S$ and $U\subset Y$ is an open subscheme.
Define the $T$-spectrum of pairs of schemes
\begin{gather*}
M^\prime_{T,n}(Y/U) = (L^0_n, \dots L^l_n,\dots ),
L^l_{n} = Fr^{\mathrm{st:id}}_{n}(Y\times\A^l)/ Fr^{\mathrm{st:id}}_{n}(U\times\A^l\cup Y\times(\A^l-0)) ,
\\ 
\begin{array}{lcl}
L^l_{n}\wedge T &\to& L^{l+1}_{n} \\ 
((Z;\phi_1,\dots \phi_n;\psi_1,\dots\psi_l;g), x)&\mapsto &
(Z;\phi_1,\dots \phi_n;\psi_1,\dots\psi_l,x;g)
\end{array}
\end{gather*}
where
$\phi_i,\psi_j\colon \A^n_X\to \A^1$, $Z(\phi,\psi)=Z\amalg \hat Z$,
$g\colon Z\to Y$,
$x\in \A^1$.
We consider the spectrum $M^\prime_{T,n}(Y/U)$ as a $\PP^1$-spectrum of a pointed sheaves via the canonical morphism $(\PP^1,\infty)\to T\in \Sh_{\bullet}$.  

Let $Y\in \Sm_S$ and $U\subset Y$ is a closed smooth subscheme.
Define the $\PP^1$-spectrum of pairs of schemes
\begin{gather*}
M^\prime_{\PP^1,n}(Y/U) = (L^0_n, \dots L^l_n,\dots ),
L^l_{n} = Fr^{\mathrm{st:id}}_{n}((Y,U)\wedge(\PP^1,\infty)^{\wedge l}) ,
\\ 
\begin{array}{lcl}
L^l_{n}\wedge (\PP^1,\infty) &\to& L^{l+1}_{n} \\ 
((Z;\phi_1,\dots \phi_n;\psi_1,\dots\psi_l;g), [x_0\colon x_\infty])&\mapsto &
(Z;\phi_1,\dots \phi_n;\psi_1,\dots\psi_l,x;(g,[x_0\colon x_\infty]))
\end{array}
\end{gather*}

Let 
$$
M^\prime_{T}(Y/U)\simeq \varinjlim_n M^\prime_{T,n}(Y/U),\;
M^\prime_{\PP^1}(Y/U)\simeq \varinjlim_n M^\prime_{\PP^1,n}(Y/U)
$$
be the termwise inductive limit of spectra of pair of schemes. We consider them as a spectra of points Nisnevich sheaves that are factor-sheaves represented by pairs.

Now immediate 
The first two claims follow directly from lemma \ref{lm:ConeAPrepl}. 
All the rest follows form corollary \ref{cor:M_P} and \cite[theorem 4.1]{GP14}.

\begin{theorem}
For any $Y\in \Sm_S$ over a finite Krull dimensional scheme $S$ such that any finite set of points in $Y$ has an affine neighbourghood, and $U\subset Y$ be an open subscheme, 
the canonical morphism 
$g(M^\prime_T(Y/U))\to M_{\PP^1}(Y/U)$ is a termwise motivic equivalence, where $g\colon \Spec_T \mathrm{Sh}_{\bullet}\to \Spec_{\PP^1}\mathrm{Sh}_{\bullet}$ is the standard forgetful functor.
If $U\subset Y$ is a smooth closed subscheme then the canonical morphism $M^\prime_{\PP^1}(Y/U)\to M_{\PP^1}(Y)$ is a termwise motivic equivalence.

Assume $S=\Spec k$ for a perfect filed $k$.
Then the canonical morphism 
$Y\to Fr^\text{st:id}(Y)$
induces a motivic equivalence of $\PP^1$-spectra of pointed sheaves
$\Sigma^\infty_{\PP^1} Y/U\to M^\prime_{\PP^1}(Y/U)$.
The $\PP^1$-spectra $C^*( M^\prime_{\PP^1}(Y/U))_f$ is a positively motivically fibrant $\Omega_{\PP^1}$ spectrum.

Moreover if $U=\emptyset$ or $Y$ is quasi-affine then 
$g(M^{\prime}_{T}(Y/U))\to M_{\PP^1}(Y/U)$ are a termwise motivic equivalences, where $M_{\PP^1}(Y/U)$ is defined similarly to $M_{\PP^1}$ (\cite[section 4]{GP14}) using $Fr(Y/U^wT{l})$.
\end{theorem}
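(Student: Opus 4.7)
The plan is to combine three ingredients already prepared in the excerpt: the levelwise comparison of framed-correspondence sheaves on cone pairs (Lemma \ref{lm:ConeAPrepl}), the $\A^1$-equivalence $Fr^{\mathrm{st:id}}\to Fr$ of Proposition \ref{lm:Freq1}, and the transfer principle of Corollary \ref{cor:M_P} fed by Garkusha--Panin's Theorem \ref{th:posomegaP1motfibrepl}. The two apparently different halves of the statement (termwise comparison of sheaves, and positive $\Omega_{\PP^1}$-fibrancy after $C^*$ and Nisnevich fibrant replacement) are precisely the two outputs of this single machine.

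The first step is to verify the termwise comparison at a fixed level $l$. By construction the pointed sheaf
$$L^l(Y/U) = \varinjlim_n L^l_n(Y/U)$$
is $Fr^{\mathrm{st:id}}$ evaluated on the cone pair built from $(Y,U)\times\A^l$, which in the notation of Lemma \ref{lm:ConeAPrepl} is $Fr^{\mathrm{st:id}}(Y\times(\A^l/\!/(\A^l-0)))$ (with $U$ also pulled into the basepoint). Proposition \ref{lm:Freq1} supplies a natural motivic equivalence $Fr^{\mathrm{st:id}}(-)\to Fr(-)$, and Lemma \ref{lm:ConeAPrepl} then gives motivic equivalences $Fr(Y\times(\A^l/\!/(\A^l-0)))\simeq Fr(Y\times(\A^l/(\A^l-0)))=Fr(Y\wedge T^l)$, all of which restrict to $\A^1$-equivalences on affine test schemes. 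The composite is a termwise motivic equivalence $L^l(Y/U)\to Fr(Y\wedge T^l)$. Since the stabilisation rules on both sides (Definitions \ref{def:stidfrcor} and \ref{def:sigmastabilization}) are induced by the same $\sigma$-operation on framing data, up to the canonical twist $s_i\mapsto s_i(t_\infty^{2d_i}-s_i^2)$ which is a harmless $\A^1$-homotopy, these termwise morphisms assemble into a morphism $g(M^\prime_T(Y/U))\to M_{\PP^1}(Y/U)$ of $T$-spectra, proving the first claim. The smooth-closed case proceeds identically using the $(\PP^1/\!/\infty)^{\wedge l}$ row of Lemma \ref{lm:ConeAPrepl}, and the last claim follows by running the same argument with $U=\emptyset$ (resp.\ the quasi-affine hypothesis, which ensures the Nisnevich-local equivalence between the two normalisations mentioned in the remark after Lemma \ref{lm:ConeAPrepl}).

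Finally, under the assumption $S=\Spec k$ with $k$ perfect, the bi-functor $(Y,U,l)\mapsto L^l(Y/U)$ is by the previous paragraph $\A^1$-equivalent on affines to $(Y,U,l)\mapsto Fr(Y\wedge T^l)$, so Corollary \ref{cor:M_P} applies directly. It yields at once the positive $\Omega_{\PP^1}$-fibrancy of $C^*(M^\prime_{\PP^1}(Y/U))_f$ and the stable motivic equivalence $\Sigma^\infty_{\PP^1}Y/U\to M^\prime_{\PP^1}(Y/U)$ induced by the canonical unit $Y\to Fr^{\mathrm{st:id}}(Y)$, which settles the perfect-field claims. The main concern in the whole argument is bookkeeping: the twisted $\sigma$-stabilisation of $Fr^{\mathrm{st:id}}$ differs from the plain concatenation used on $Fr$, and one has to check that the comparison squares commute on the nose or at worst up to an $\A^1$-homotopy of bonding maps. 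Because this homotopy is canonical and Corollary \ref{cor:M_P} is formulated exactly so as to absorb such discrepancies once we know the bi-functor $Fr^{\mathrm{st:id}}$ is sectionwise $\A^1$-equivalent to $Fr$ on affines, no further coherence work is required and the theorem closes.
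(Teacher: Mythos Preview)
Your proposal is correct and follows essentially the same route as the paper: both combine Lemma~\ref{lm:ConeAPrepl}, Proposition~\ref{lm:Freq1}, and Corollary~\ref{cor:M_P} (backed by Garkusha--Panin's Theorem~\ref{th:posomegaP1motfibrepl}) to obtain the termwise equivalence and the positive $\Omega_{\PP^1}$-fibrancy.

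One small organisational difference worth noting: for a general open $U\subset Y$, the paper does not treat the pair $(Y,U)$ directly as you do, but instead reduces to the case $U=\emptyset$ applied separately to $Y$ and to $U$ (via the evident cofibre sequence), and for the positive $\Omega$-spectrum property in the pair case it appeals to the internal arguments of \cite[theorem~4.1]{GP14} (or \cite[theorem~10.1]{GP14}) rather than to Corollary~\ref{cor:M_P}, which as stated covers only a single $Y$. For the quasi-affine clause the paper cites \cite{MotSmAfPair} explicitly, which is the content of the remark you allude to after Lemma~\ref{lm:ConeAPrepl}. These are bookkeeping differences, not substantive ones; your direct treatment of the pair and your appeal to Corollary~\ref{cor:M_P} are legitimate once one observes that the corollary's proof goes through verbatim for the bi-functor $Fr^*(Y/U\wedge T^l)$ in place of $Fr^*(Y\wedge T^l)$.
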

\begin{proof}
For the case of $U=\emptyset$ the $g(M^\prime_T(Y))\to M_{\PP^1}(Y)$ and $M^\prime_{\PP^1}(Y/U)\to M_{\PP^1}(Y/U)$ follow directly from lemma \ref{lm:ConeAPrepl}, and all the rest follows form corollary \ref{cor:M_P} and \cite[theorem 4.1]{GP14}. 
The last statement for the case of quasi-projective $Y$ is the consequence of \cite{MotSmAfPair}.

The equivalences $g(M^\prime_T(Y/U))\to M_{\PP^1}(Y/U)$ and $M^\prime_{\PP^1}(Y/U)\to M_{\PP^1}(Y/U)$ follow from the ones for $Y$ and $U$. The properties of positively motivically $\Omega$-spectra $g(M^\prime_T(Y/U))\to M_{\PP^1}(Y/U)$ and $M^\prime_{\PP^1}(Y/U)\to M_{\PP^1}(Y/U)$ follows by the same arguments as in the proof \cite[theorem 4.1]{GP14} (or alternatively they can be deduced from the result of \cite[theorem 10.1]{GP14}).
All properties of $M^\prime_{T}(Y/U)$ and $M^\prime_{T}(Y/U)$ follows by the same arguments as properties of $M_{\PP^1}(Y)$ in \cite{GP14}. Actually some of them are already stated and proven inside the proof of \cite[theorem 4.1]{GP14}.
\end{proof}

Consider the category $\Sm^\textrm{o-pair}_k$ with objects being 
the pairs $(X,U)$ with $X\in Sm_k$ and an open subscheme $U\subset X$ over a filed $k$, and morphisms being morphisms of pairs.
Consider the category $\Sm^\textrm{cl-pair}_k$ with objects being pairs $(X,Z)$ where $Z\subset X$ is a closed subscheme that is the union $Z= Z_1\amalg \dots Z_n$ for smooth closed subschemes $Z_i$ such that for any $I\subset \{1,dots n\}$, $\cap_{i\in I} Z_i$ is smooth.
We can see that the categories $\Sm^\textrm{*-pair}_k$ are equipped with closed symmetry monoidal structure
$$
(X_1,Y_1)\wedge (X_2,Y_2) = (X_1\times X_2, Y_1\times X_2\cup X_1\times Y_2).
$$

Let $\mathrm{ind}\text{-}\Sm^\textrm{*-pair}$ be the category
with objects being sequences 
\begin{equation}
(X_1,U_1)\to (X_2,U_2)\to \dots (X_i,U_i)\to \dots \end{equation} 
of closed embeddings in $\Sm^\textrm{*-pair}_k$
and morphisms being morphisms of sequences.
We call such sequences \eqref{eq:ind-pairs} by ind-pairs, precisely either open smooth ind-pairs, either closed ind-pairs.
The monoidal structure extends in a natural way to ind-pairs as well.

Denote by $T$ the pair $(\A^1,\A^1-0)\in \mathrm{ind}\text{-}\Sm^\textrm{o-pair}$.
Then we can consider $T$-spectra of ind-pairs, 
by which we mean the sequences
\begin{equation*}\label{eq:SpecTind-pair}
(R^0,\dots R^l,\dots ), f_i\colon R^{l}\wedge T\to R^{l+1},
\end{equation*}
where the terms $R^l$ are ind-pairs and morphisms $f_i$ are morphisms of ind-pairs.
Denote the category of such sequences  
by $\Spec_T \mathrm{ind}\text{-}\Sm^\textrm{o-pair}$.

Similarly consider $\PP^1$ as the pair $(\PP^1,\infty)\in \mathrm{ind}\text{-}\Sm^\textrm{cl-pair}$,
and define the category $\Spec_{\PP^1} \mathrm{ind}\text{-}\Sm^\textrm{cl-pair}$ of $\PP^1$-spectra
of closed ind-pairs.

Any smooth pair $(X,U)$ defines the Nisnevich sheaf $X/U$ that is a factor sheaf of the sheaves represented by $X$ and $U$.
Then any ind-pair defined a Nisnevich sheaf, and consequently any $T$-spectrum ($\PP$-spectrum) of ind-pairs defines a $T$-spectrum  ($\PP$-spectrum) of Nisnevich sheaves.
Thus sine any Nisnevich sheave can be considered as a motivic space we get the functor  
\begin{equation}\label{eq:SpecTpairSH}
\Spec_T \mathrm{ind}\text{-}\Sm^\textrm{o-pair} \to \SH(k), \; 
\Spec_{\PP^1} \mathrm{ind}\text{-}\Sm^\textrm{cl-pair} \to \SH(k)
\end{equation}
where by $\SH(k)$ we mean the model for the stable motivic homotopy category given by $T$-spectra ($\PP^1$-spectra) of motivic spaces.

Now using the representability obtained in proposition \ref{prop:st:id:Model} we get 
\begin{theorem}
Let $Y\in \Sm_k$ over a perfect filed $k$. 
Then there are 
a $T$-spectrum $M^\prime_{T}(Y)$ in the category $\mathrm{ind}\text{-}\Sm^\textrm{o-pair}_k$ and 
a $\PP^1$-spectrum $M^\prime_{\PP^1}(Y)$ in $\mathrm{ind}\text{-}\Sm^\textrm{cl-pair}_k$
$$\begin{array}{cc}
M^\prime_{T}(Y)=(R^0,\dots R^{l}\dots), &M^\prime_{\PP^1}(Y)=(L^0,\dots L^{l}\dots)
\end{array}$$
such that 
$M^\prime_{T}(Y)\simeq \Sigma^\infty_{T}Y$ and $M^\prime_{\PP^1}(Y)\simeq \Sigma^\infty_{\PP^1}Y $ 
in 
$\SH(k)$,  and
$$\begin{array}{cccc}
R^l(Y) \simeq \mathcal Hom_{\Hd(k)}(T, R^{l+1}(Y)),& L^l(Y) \simeq \mathcal Hom_{\Hd(k)}( (\PP^1,\infty), L^{l+1}(Y)), & l>0.
\end{array}$$
The construction is natural on the class of $Y$ with an affine neighbourhood for any finite set of points.
\end{theorem}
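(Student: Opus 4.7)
The plan is to assemble the three pieces of machinery already developed in the previous sections: the representability of $Fr^{\mathrm{st:id}}_n(Y)$ as smooth schemes (Proposition \ref{prop:st:id:Model}), the termwise motivic equivalence of the auxiliary spectra $M^\prime_{T}(Y)$ and $M^\prime_{\PP^1}(Y)$ with $M_{\PP^1}(Y)$ (the preceding theorem), and the computation of the positively motivically fibrant $\Omega_{\PP^1}$-replacement of $\Sigma^\infty_{\PP^1}Y$ due to Garkusha–Panin, extended to $Fr^{\mathrm{st:id}}$ via Corollary~\ref{cor:M_P}.

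First, I would construct the ind-pairs. By Proposition~\ref{prop:st:id:Model}, for each fixed $l$ and $n$ the sheaf $Fr^{\mathrm{st:id}}_n(Y\times\A^l)$ is represented by a smooth scheme $\mathcal Fr^{\mathrm{st:id}}_n(Y\times\A^l)$, equipped with a smooth morphism to $\mathcal Fr^{\mathrm{st:id}}_n(\mathrm{pt})$ that is in turn étale over $\A^{N_n}$. The subsheaf $Fr^{\mathrm{st:id}}_n(Y\times(\A^l-0))$ is represented by an open subscheme (the locus where the last coordinate of the map $\A^n_X\to\A^n\times\A^l\times Y$ lies in $\Gm$), giving an object of $\Sm^\textrm{o-pair}_k$. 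Taking the colimit in $n$ using the stabilisation maps $\sigma$ of Definition~\ref{def:stidfrcor} produces the open ind-pair $R^l$. The structure map $R^l\wedge T\to R^{l+1}$ is induced by the explicit geometric formula $((Z;\phi_\bullet;\psi_\bullet;g),x)\mapsto(Z;\phi_\bullet;\psi_\bullet,x;g)$, which lifts to a morphism of smooth schemes by the Yoneda lemma. The construction for $M^\prime_{\PP^1}(Y)$ is parallel, using $(\PP^1,\infty)$-suspension and the closed-pair structure on $\Sm^\textrm{cl-pair}_k$; here representability of closed subschemes of smooth schemes via the Weil restriction packaging of Proposition~\ref{prop:st:id:Model} is what makes the ind-pair live in $\Sm^\textrm{cl-pair}_k$.

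Second, I would derive the stable motivic equivalence $M^\prime_{T}(Y)\simeq \Sigma^\infty_T Y$ (and the $\PP^1$ analogue) in $\SH(k)$. This is immediate from the previous theorem, which shows that $g(M^\prime_T(Y))\to M_{\PP^1}(Y)$ is a termwise motivic equivalence, together with Theorem~\ref{th:posomegaP1motfibrepl} giving $M_{\PP^1}(Y)_f\simeq \Sigma^\infty_{\PP^1}Y$, combined with the canonical identification $(\PP^1,\infty)\simeq T$ in $\mathbf H_\bullet(k)$.

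Third, and this is where the main subtlety lies, I would establish the unstable $\Omega$-relation $R^l(Y)\simeq \mathcal Hom_{\Hd(k)}(T,R^{l+1}(Y))$ for $l>0$. By Corollary~\ref{cor:M_P} applied to $Fr^{\mathrm{st:id}}$, the spectrum $C^*(M^\prime_T(Y))_f$ is a positively motivically fibrant $\Omega_T$-spectrum, so the adjoint structure maps are sectionwise weak equivalences in positive degrees. Since $C^*$ followed by Nisnevich-local fibrant replacement computes the derived mapping space into each term, descending to $\Hd(k)$ yields the claimed equivalence. The main obstacle here is verifying the naturality clause: one must check that the representing smooth schemes $\mathcal Fr^{\mathrm{st:id}}_n(Y\times\A^l)$, their open subschemes, the stabilisation maps and the structure maps all depend functorially on $Y$ in the subcategory where every finite set of points has an affine neighbourhood. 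This reduces to functoriality of the Weil restriction used in Proposition~\ref{prop:st:id:Model}, which holds precisely under that affine-neighbourhood hypothesis on $Y$ — the same hypothesis that appears in \cite{EHKSY-infloopsp} for the analogous construction of $H^{fr}(Y)$.
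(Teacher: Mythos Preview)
Your proposal is correct and follows essentially the same route as the paper: the theorem is obtained by combining the representability of $Fr^{\mathrm{st:id}}_n(Y)$ by smooth schemes (Proposition~\ref{prop:st:id:Model}) with the preceding theorem establishing the termwise motivic equivalence $M^\prime_T(Y)\to M_{\PP^1}(Y)$ and the positive $\Omega$-spectrum property via Corollary~\ref{cor:M_P} and Theorem~\ref{th:posomegaP1motfibrepl}. The paper in fact states the result as an immediate consequence of these ingredients without further elaboration, so your more detailed unpacking of the open/closed ind-pair structure and the naturality via Weil restriction is a faithful expansion of the intended argument.
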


Finally, let us outline the construction that gives us the model with all terms being a indunctive sequences of pairs $(X,Z)$ $X\in \Sm_k$ and $Z$ is closed smooth subschemes.
But we need to replace the notion of $\PP^1$-spectra by the notion of twisted $\PP^1$-spectra, where the suspension $(X,Z)\wedge \PP^1$ is replaced with the nontrivial $(\PP^1,\infty)$ bundle $(\widetilde X,\widetilde Z)\to (X,Z)$ and a morphism $\overline{ X}\to X\times \A^1$ that induces isomorphism $\overline X\times (X-Z)\simeq \A^1\times(X-Z)$, where $\overline X$ is the corresponding $\A^1$ bundle over $X$.

Alternatively we can say that the twisted spectrum is the sequence 
$$(L^0,L^1,\dots L^l,\dots ),  L^l\leftarrow \widetilde{L}^l\to \widetilde L^{l+1},$$
where 
the morphisms in the right formula are morphisms in $\mathrm{ind}\text{-}\Sm^\textrm{cl-pair}$,
and
$L^l = \varinjlim_n (X^l_n,Z^l_n)$, $Z^l_n$ are smooth,
and 
$L^l\leftarrow \widetilde{L}^l$ is a $(\PP^1,\infty)$ bundle,
with a rational morphism of ind-pairs $\widetilde{L}^l\dasharrow L^l\wedge \PP^1$ that in an isomorphism of schemes $\widetilde{X}^l_n\times_{} (X^l_n-Z^l_n)\simeq (X^l_n\wedge \PP^1)\times (X^l_n-Z^l_n)$.

Then the model for $\Sigma_{\PP^1}(Y/Z)$ in the twisted $\PP^1$-spectra is given by the spectrum with terms
Define the $\PP^1$-spectrum of pairs of schemes
\begin{gather*}
M^{\prime\prime}_{\PP^1,n}(Y/Z) = (L^0_n, \dots L^l_n,\dots ),\\
L^l_{n} = Fr^{\mathrm{st:id}}_{n}((Y,Z)\wedge(\PP^l,\PP^{l-1}) , 
\widetilde{L}^{l+1}_{n} = Fr^{\mathrm{st:id}}_{n}((Y,Z)\wedge(Bl_{(1,1,\dots ,1,0)}(\PP^{l+1}),W^l))
\end{gather*}
where $W^l$ is the proper preimage of $\PP^l\subset\PP^{l+1}$ consisting of points $(0,x_1,\dots x_n)$.
and morphisms given by the corresponding morphism for $\PP^{l+1}$, $\PP^{l}$, and $Bl_{(1,\dots 1,0)}(\PP^{l+1})$.

\section{Quasi-affine model for $Fr(-\times P,Y/U\wT{l})$.}

In the section we assume the following context notations
\begin{context}
\label{context:Y0}
Let $Y$ be an affine scheme over $S$, $N\in \mathbb Z$ is even, and
$y_i\in \GlS{N}{d_y}$, $i=1\dots N-r$, such that
$Z(y_1,\dots y_{N-r})\cap \A^N=Y\amalg \hat Y$.
Let $P$ be an affine scheme over $S$ and $p_i\in \GlS{M}{d_p}$, $i=1,\dots q$,
$Z(p_1,\dots p_q)=V\subset \PP^N$.

For any $n>N$ we consider 
$Y$ as the subscheme $0\times Y\times 0\subset \A^M\times \A^n$,
define $y_j\in \Gamma(\PP^{M+n},\cO(d_y))$, $N-r< j\leq n-r$, by $y_j=t_{j+r+M}(t_\infty-t_{j+r+M})^{d_y-1}$,
and consider
$y_j$, $j\leq N-r$ 
as sections in $\Gamma(\PP^{M+n},\cO(d_y))$ via the rational projection map 
$\PP^{M+n}\dasharrow \PP^{N}\colon 
(t_\infty,t_1,\dots t_n)\mapsto (t_\infty,t_{M+1},\dots t_{M+N})$.
Consider the sections $p_i$ as a sections in $\Gamma(\PP^{M+n},\cO(d_p))$ via the rational projection map $\PP^{M+n}\dasharrow \PP^{M}\colon (t_\infty,t_1,\dots t_n)\mapsto (t_\infty,t_{1},\dots t_{M})$. 

Precisely this means that 
the homogeneous polynomial defining $p_j$ on $\PP^{M+n}_X$ is given by the same formula as the homogeneous polynomial defining $p_j$ on $\PP^M$, and
the homogeneous polynomial defining $y_j$ on $\PP^{M+n}_X$ is given by the same formula as the homogeneous polynomial defining $y_j$ on $\PP^N$ but under the substitution $t_i\dasharrow t_{M+i}$.  

Denote by $\overline Y$ and $\overline{\hat Y} $ the closure of $\hat Y$ and $Y$ in $\PP^N$ (or $\PP^n$).

\end{context}
In addition we will work in the following context
\begin{context}\label{context:YU0oc}
Under the assumptions of the context \ref{context:Y0} let $U\subset Y$ be
an open.
\end{context}

\subsubsection{The sheaves $\Fraff_{n,d}$.}

\begin{definition}\label{def:Fraff}
Choose some even integers 
$d, d_e, d_u, d_b, d_w\in \mathbb Z$.
Under the context \ref{context:YU0oc} 
for any $n\in \mathbb Z$, $n>N$,
define the pair of quasi-affine schemes
$\cFraff(P,Y/U\wT{l}) = (\cFraff_\alpha(-\times P,Y\times\A^{l}),\Fraff_\alpha(-\times P,U\times\A^{l}\cup Y\times(\A^l-0)))$,
$\alpha = (n, d, d_e,  d_u, d_b, d_w)$
by the following.

Consider the $S$-affine scheme $\cFs_\alpha(P,Y\wT{l})$ parametrising the
vectors $a = (e,s, u, c, b, w, 
\ome, f, h, z, \oveb, \ovec,\ovew)$
\begin{gather*}\label{eq:sectcond}
\begin{array}{lllccc}
&&& e&\in& \GlSX{M+n}{X}{d_e},\\
s &=& (s_1,\dots s_n,s_n+1,\dots s_{n+l}), &s_i&\in& \GlSX{M+n}{X}{d_e},\\
u &=& (u_1,\dots u_n), &u_j&\in& \GlSX{M+n}{X}{d_u},\\
b &=& (b_1,\dots b_{n+l}), &b_i&\in& \GlSX{M+n}{X}{d_b},\\
w &=& (w_{i,j})_{i=1,\dots n+l}^{j=1,\dots n-r}, &w_{i,j}&\in& \GlSX{M+n}{X}{d_w},\\
\ome &=& (\ome_1,\dots, \ome_{n-r}), &\ome_j&\in& \GlSX{M+n}{X}{d_\ome },\\
c &=& (c_{i,k})_{i=1,\dots n+l}^{k=1,\dots n}, &c_{i,k}&\in& \GlSX{M+n}{X}{d_c},\\
f &=& (f_1,\dots f_{n-r}), &f_{i}&\in& \GlSX{M+n}{X}{d_f},\\
\oveb &=& (\oveb_1,\dots \oveb_{q}), &b_i&\in& \GlSX{M+n}{X}{d_\oveb},\\
\ovew &=& (\oveb_1,\dots \oveb_{q}), &w_{i,j}&\in& \GlSX{M+n}{X}{d_\ovew},\\
\ovec &=& (c_{i,k})_{i=1,\dots q}^{k=1,\dots n}, &b_i&\in& \GlSX{M+n}{X}{d_\ovec},
\end{array}
\\
d_c=d_b,\, 
d_\ome =d_w+d - 2 d_e,\, 
d_f =d +d_b - d_y^2
\\
d_\oveb = d_\ovec = d_b+d-d_p,\,
d_\ovew = d_w+d-d_p,\,
d_h = d_b+d-2d_u,
\end{gather*}
such that
\begin{equation}\label{eq:defFraff}
\begin{array}{rclclclcl}
t_\infty^{d+d_b-2d_e}(t_\infty^{d_e} - e) e &=& \sum\limits_{i=1}^{n+l} b_i s_i &+& \sum\limits_{i=1}^{q} \overline{b}_{i} p_i &+& \sum\limits_{i=1}^{n-r} f_j y_j^2, &&\\
u_k t_\infty^{d_c+d-d_u}&=&\sum\limits_{i=1}^{n+l} c_{i,k} s_i &+&\sum\limits_{i=1}^{l} \overline{c}_{i,k} p_i &+& \sum\limits_{j=1}^{n-r} z_{j,k} y_j^2 \\
u_k\big|_{\PP^{n-1}_X}&=&t_k^{d_u}\big|_{\PP^{n-1}_X},\\
t_\infty^{d+d_w-d_y} y_j &=& \sum\limits_{i=1}^{n+l} w_{i,j} s_i &+& \sum\limits_{i=1}^{q} \overline{w}_{i,j} p_i &+& \sum\limits_{k=1}^{n} h_{k,j} u_k  &+& \ome _j e^2 ,
\end{array}\end{equation}
where 
$\PP^{n-1} = Z(t_\infty,t_1,\dots t_M)\subset\PP^{M+n}$.

Define $\cFraff_\alpha(X\times P,Y\times\A^{l})$ as an open subscheme of $\cFs_\alpha(P,Y\wT{l})$ 
\begin{multline*}
\cFraff_\alpha(X\times P,Y\times\A^{l}) = 
\{(e,s, u, w, \ome , c)\in \cFs_\alpha |
\Gamma(\PP^{M+n}_X,\cO(d))\twoheadrightarrow \Gamma(Z(I^2(Z(e,s,u))),\cO(d)),\\
Z(s_1,\dots s_{n+l})\cap \overline{\hat Y}=\emptyset, 
Z(s_1,\dots s_{n+l})\cap (\overline{Y}\setminus Y)=\emptyset\}.
\end{multline*}

Define $\cFraff_\alpha(X\times P,U\times\A^{l}\cup Y\times(\A^l-0))$ as an open subscheme of $\cFraff_\alpha(-\times P,Y\times \A^l)$
\begin{multline*}
\cFraff_\alpha(X\times P,U\times\A^{l}\cup Y\times(\A^l-0)) = \\
\{ (e,s, u, w, \ome , c)\in \cFraff_\alpha(-\times P,Y\times \A^l)|\, 
\A_X^{M+n}\times_{Y\times\A^{n+l+1}} (Y\setminus U)\times 0=\emptyset \} \end{multline*}
where the morphism $\A_X^{M+n}\to Y\times\A^{n+l}$ 
is given by 
\begin{equation}\label{eq:Frammaps}
\begin{array}{ccc}
pr_{Y}\colon \A_X^{M+n}&\to& Y\\(x,t_1,\dots t_{M+n})&\mapsto& (t_{M+1},\dots t_{M+N}),
\end{array}\quad \begin{array}{ccc}
\A_X^{M+n}&\to& \A^{n+l+1}\\(x,t_1,\dots t_{n+l})&\mapsto& (s_1/t_\infty^{d}, s_{n+l}/t_\infty^{d}, e/t_\infty^{d_e}).
\end{array}\end{equation}
\end{definition}

\begin{remark}
In the case of the base filed the condition $Z(s_1,\dots s_{n+l})\cap \overline{\hat Y} =\emptyset$ can be replaced by the equation $s_1\big|_{\hat Y}=t_\infty^d$. 

Moreover we can delete the equations $y_i$ for , in other words we can work with the subscheme $Y\times\A^{N-n}$ instead of $Y\times 0$ in $\A^n=\A^N\times\A^{N-n}$.
So this model is defined with less number of equations. 
\end{remark}

\begin{definition}
Denote by 
$\Fraff(-\times P,Y\times\A^l)$ and $\Fraff(-\times P,U\times\A^l\cup Y\times(\A^l-0) )$
the sheaves represented by the corresponding schemes, and denote by 
$\Fraff(-\times P,Y/U\wT{l})$ the Nisnevich factor-sheaf represented by the pair 
$\cFraff(P,Y/U\wT{l})$.
\end{definition}

\begin{lemma}\label{lm:FrafftpFr}
Let $a = (e,s, u, c, b, w,\ome, f, h, z, \oveb, \ovec,\ovew)\in Map(X,\cFs_\alpha(P,Y/U\wT{l})$. 
Denote $Z = Z(s,e)\cap (X\times P \times Y\times \A^{n-N})$, and $Z^{(s)} = (Z(s)\cap \A^{M+n}_X)\times_{\A^M} P$.
Then the following hold
\begin{itemize}
\item[{\rm\{1\}}] $Z$ is finite over $X\times P$ and is contained in $X\times P\times \A^{n-N}\times Y$; 
\item[{\rm\{2\}}] $Z^{(s)} =Z\amalg \hat Z$, $Z\subset Z(e)$, $\hat Z\subset Z(t^{d_e}_\infty -e)$;
\end{itemize}
If $a\in \cFraff(X\times P,Y\wT{l})$ then 
\begin{itemize}
\item[{\rm\{3\}}] 
$Z(\overline{\overline s})\subset X\times P\times \A^n$,
and $Z$ is disjoint component of $Z(\overline{\overline s})$, and $Z=Z(\overline{\overline e},\overline{\overline s},\overline{\overline u})$
\end{itemize}
where
$\overline{\overline s}$ and $\overline s$ denote the inverse images of $s$ with respect to 
morphisms
$X\times P\times \overline Y\to \PP^{M+n}_X$
$X\times P\times Z(I^2(\overline Y))\to \PP^{M+n}_X$,
and similarly for $\overline e$, $\overline{\overline e}$, and 
$\overline u$, $\overline{\overline u}$.
\end{lemma}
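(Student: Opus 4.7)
Each of the three claims is obtained by reading the defining identities \eqref{eq:defFraff} on a successive restriction of the ambient $\PP^{M+n}_X$: first on $Z^{(s)}$ (where $s$ and $p$ vanish), then on $Z$ (where in addition the $y_j$ with $j\leq N-r$ vanish), and finally on the second-order thickening $X\times P\times Z(I^2(\overline Y))$. On each locus the right-hand sides of \eqref{eq:defFraff} collapse, and the conclusion is extracted by combining the collapsed identity with the prescribed boundary behaviour of the defining sections.

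For claim \{2\}, the first equation of \eqref{eq:defFraff} restricted to $Z^{(s)}$ becomes
\[
t_\infty^{d+d_b-2d_e}(t_\infty^{d_e}-e)\,e \;=\; \sum_{j=1}^{n-r} f_j\, y_j^2,
\]
so $e\,(e-t_\infty^{d_e})$ lies in the square of the ideal generated by the $y_j$. The loci $\{e=0\}$ and $\{e=t_\infty^{d_e}\}$ inside $Z^{(s)}$ can only coincide where $t_\infty=0$ and $e=0$ simultaneously, which is excluded by the openness conditions in the definition of $\cFraff$ (the cut $Z(s)$ meets neither $\overline{\hat Y}$ nor $\overline Y\setminus Y$, hence nothing at infinity of the $Y$-direction). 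Coprimality of $e$ and $t_\infty^{d_e}-e$ on $Z^{(s)}$ then yields the scheme-theoretic splitting $Z^{(s)}=Z\amalg\hat Z$ with $Z\subset Z(e)$ and $\hat Z\subset Z(t_\infty^{d_e}-e)$.

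For claim \{1\} I combine the openness conditions in the definition of $\cFraff$ with the second equation of \eqref{eq:defFraff}. The openness $Z(s)\cap(\overline{\hat Y}\cup(\overline Y\setminus Y))=\emptyset$ shows that $Z(s)\cap\overline Y\subset Y$, so the cut $Z(s)\cap(X\times P\times\overline Y\times\PP^{n-N})$ is projective over $X\times P$ and already avoids infinity in the $Y$-direction; the prescribed infinity behaviour $u_k|_{\PP^{n-1}_X}=t_k^{d_u}$ gives every $u_k$ a nontrivial leading term on the $\PP^{n-N}$ at infinity, so the same cut also avoids infinity in the $\A^{n-N}$-direction. Thus $Z$ is simultaneously closed in an affine scheme and proper over $X\times P$, hence finite over $X\times P$. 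The reduction $u_k=\sum_{j>N-r}z_{j,k}\,y_j^2$ of the second equation on $Z$ places each $u_k$ in the square of the last $n-N$ coordinate ideal, which in particular confines $Z$ to $X\times P\times\A^{n-N}\times Y$.

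Claim \{3\} lifts the analyses of \{1\} and \{2\} to the second-order thickening $X\times P\times Z(I^2(\overline Y))$, replacing $s,e,u$ by their pullbacks $\overline{\overline s}$, $\overline{\overline e}$, $\overline{\overline u}$. The surjectivity hypothesis $\Gamma(\PP^{M+n}_X,\cO(d))\twoheadrightarrow\Gamma(Z(I^2(Z(e,s,u))),\cO(d))$ built into the definition of $\cFraff$ is precisely what is needed to re-express the identities \eqref{eq:defFraff} on this thickening while retaining the infinity information, and this is what forces $Z(\overline{\overline s})\subset X\times P\times\A^n$. The equality $Z=Z(\overline{\overline e},\overline{\overline s},\overline{\overline u})$ then follows by matching pieces: the vanishing $\overline{\overline u}=0$ cuts out exactly the $\A^{n-N}$-locus determined in claim \{1\}, and the vanishing $\overline{\overline e}=0$ recovers the disjoint component of claim \{2\}. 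The main obstacle I anticipate is the infinitesimal bookkeeping here: one must verify that the surjectivity hypothesis propagates uniformly through all three defining equations on the second-order neighbourhood and that the idempotent decomposition of claim \{2\} remains valid after the thickening; this is the point of the argument I expect to be most delicate.
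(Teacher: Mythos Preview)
Your plan has a genuine gap in claim~\{2\}, and this gap propagates into how you organise the whole argument.

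On $Z^{(s)}$ the first equation of \eqref{eq:defFraff} gives (after clearing $t_\infty$, which is invertible there)
\[
(t_\infty^{d_e}-e)\,e \;\in\; (y_1^2,\dots,y_{n-r}^2),
\]
not $(t_\infty^{d_e}-e)\,e=0$. Coprimality of $e$ and $t_\infty^{d_e}-e$ alone does \emph{not} produce a scheme-theoretic splitting of $Z^{(s)}$ unless the product actually vanishes in $\cO(Z^{(s)})$: the Chinese remainder theorem splits $\cO(Z^{(s)})/(e(t_\infty^{d_e}-e))$, not $\cO(Z^{(s)})$. Your appeal to the openness conditions of $\cFraff$ is also misplaced here: those conditions are only assumed for claim~\{3\}, and in any case they do not force the $y_j^2$-term to vanish on $Z^{(s)}$.

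The paper's proof avoids this by never working on $Z^{(s)}$ directly. Instead it pulls everything back along $X\times P\times\overline Y\to\PP^{M+n}_X$ (and along the first-order thickening $X\times P\times Z(I^2(\overline Y))$), obtaining the sections $\overline{\overline s},\overline{\overline e},\overline{\overline u}$ and $\overline s,\overline e,\overline u$. On $X\times P\times\overline Y$ the $y_j$ vanish identically, so all three equations of \eqref{eq:defFraff} collapse to exact equalities, not congruences modulo $(y_j^2)$. From the second and third equations one gets $Z(\overline{\overline s})=Z(\overline{\overline s},\overline{\overline u})\amalg(\text{piece at }\PP^{n-1})$; from the first equation one gets the clean idempotent splitting $Z(\overline{\overline s},\overline{\overline u})=Z(\overline{\overline e},\overline{\overline s},\overline{\overline u})\amalg Z(t_\infty^{d_e}-\overline{\overline e},\overline{\overline s},\overline{\overline u})$; and from the last equation one reads off $Z(\overline{\overline e},\overline{\overline s},\overline{\overline u})\subset X\times P\times Y$. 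Claims~\{1\} and~\{2\} are then extracted by comparing these loci with $Z$ and $Z^{(s)}$.

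For claim~\{3\} you invoke the surjectivity hypothesis $\Gamma(\PP^{M+n}_X,\cO(d))\twoheadrightarrow\Gamma(Z(I^2(Z(e,s,u))),\cO(d))$, but this is not what the paper uses, and it is not what is needed. The relevant input from the definition of $\cFraff$ is the pair of emptiness conditions $Z(s)\cap\overline{\hat Y}=\emptyset$ and $Z(s)\cap(\overline Y\setminus Y)=\emptyset$; these are exactly what force $Z(\overline{\overline s})\subset X\times P\times\A^n$ and make $Z$ a clopen component of $Z(\overline{\overline s})$. The surjectivity hypothesis plays no role in this lemma.
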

\begin{proof}
It follows from the definitions that
\begin{equation}\label{eq:ZZ(s)ov}
Z = Z(\overline e,\overline s)\cap X\times P\times \A^n,
Z^{(s)} = Z(\overline s)\cap X\times P\times \A^n.
\end{equation}

The second and the third equations of \eqref{eq:defFraff} imply that 
\begin{equation}\label{eq:ovovs} 
Z(\overline{\overline s})= Z(\overline{\overline s},\overline{\overline u})\amalg (Z(\overline{\overline e},\overline{\overline s})\cap X\times P\times \PP^{n-1}).
\end{equation} 
Then the first equation of \eqref{eq:defFraff} implies that 
$Z(\overline{\overline s},\overline{\overline u}) = Z(\overline{\overline e},\overline{\overline s},\overline{\overline u})\amalg Z(t_\infty^{d_e}-\overline{\overline e},\overline{\overline s},\overline{\overline u})$.
Finally, the last equation of \eqref{eq:defFraff} implies that
\begin{equation}\label{eq:subsetY}
Z(\overline{\overline e},\overline{\overline s},\overline{\overline u})\subset X\times P\times Y
\end{equation} (we mean $X\times P\times Y \times 0$, see context \ref{context:Y0}).
Since by definition $Z({\overline e},{\overline s},{\overline u})\subset X\times P\times Y$, it follows now that
$Z(\overline{\overline e},\overline{\overline s},\overline{\overline u})=Z({\overline e},{\overline s},{\overline u})\amalg {\hat Z^\prime}$.
Thus combining with \eqref{eq:ovovs} we see that
$Z(\overline{\overline s}) = Z({\overline e},{\overline s},{\overline u})\amalg H$. 

Using \eqref{eq:ovovs} again we see that 
$Z(\overline{\overline s},\overline{\overline u}) = Z(\overline{\overline s})\cap X\times P\times \A^n$, 
and 
$Z({\overline s},{\overline u}) = Z({\overline s})\cap X\times P\times \A^n$.
Using \eqref{eq:ZZ(s)ov} we see 
$Z({\overline s},{\overline u})  = Z^{(s)}$, and
$Z({\overline e},{\overline s},{\overline u}) = Z$.
Hence $Z$ is finite over $X\times P$ and by \eqref{eq:subsetY} $Z\subset X\times P\times Y$.
This is \{1\} and \{2\} follows from \eqref{eq:ovovs}. The point \{3\} follows immediate form the above and the definition of $\cFraff$.
\end{proof}

\begin{lemma}
Let $n$ and $l$ be integers. 
Then for any $d_u\in \mathbb Z$
there is $h\in\mathbb Z$ such that for all $d>h$,
an affine scheme $X$,
sections $u=(u_1,\dots u_n)\in\GlSX{n}{X}{d_u}$, $u_i\big|_{\mathbb P^{n-1}_X}=t_i^{d_u}$,
and a closed subscheme $Z\subset Z(u)$,
the restriction homomorphism $\Gamma(\PP^n_X, \cO(d)^l ) \to \Gamma(Z, \cO(d)^l )\simeq \Gamma(Z, \cO )$ is surjective.
\end{lemma}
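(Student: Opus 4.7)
The plan is a Castelnuovo--Mumford regularity argument via the Koszul resolution of the complete intersection $Z(u)$. First I would observe that the hypothesis $u_i|_{\PP^{n-1}_X} = t_i^{d_u}$ forces $Z(u) \cap \PP^{n-1}_X = \emptyset$, so $Z(u) \subset \A^n_X$; being proper and affine over $X$, the scheme $Z(u)$ is therefore finite over $X$. Setting $A = \Gamma(X, \cO_X)$ and dehomogenizing, each $u_i|_{\A^n_X}$ has the form $t_i^{d_u} + (\text{lower-order terms in } t_i)$, forcing $Z(u) \to X$ to be finite flat of rank $d_u^n$. I would then deduce that $(u_1, \dots, u_n)$ is a regular sequence on $\PP^n_X$: modulo the non-zero-divisor $t_\infty$ it reduces to $(t_1^{d_u}, \dots, t_n^{d_u})$ in $A[t_1, \dots, t_n]$, which is manifestly regular.

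Once regularity is in hand, the Koszul complex produces a locally free resolution
\[
0 \to \cO(-n d_u) \to \cdots \to \bigoplus \cO(-d_u) \to \cI_{Z(u)} \to 0
\]
on $\PP^n_X$. Twisting by $\cO(d)$ and splitting into short exact sequences reduces the vanishing $H^1(\PP^n_X, \cI_{Z(u)}(d)) = 0$ to the vanishings $H^j(\PP^n_X, \cO(d - j d_u)) = 0$ for $j = 1, \dots, n$. Since $X$ is affine, the Leray spectral sequence combined with standard line-bundle cohomology on $\PP^n$ gives $H^j(\PP^n_X, \cO(m)) = 0$ for $0 < j < n$ and any $m$, and $H^n(\PP^n_X, \cO(m)) = 0$ exactly when $m > -n - 1$. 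The critical case is $j = n$, and it forces $d - n d_u > -n - 1$; so the bound $h := n d_u - n - 1$ suffices.

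With this vanishing, the long exact sequence of $0 \to \cI_{Z(u)}(d) \to \cO(d) \to \cO_{Z(u)}(d) \to 0$ gives a surjection $\Gamma(\PP^n_X, \cO(d)) \twoheadrightarrow \Gamma(Z(u), \cO(d))$. The further surjection $\Gamma(Z(u), \cO(d)) \twoheadrightarrow \Gamma(Z, \cO(d))$ is automatic since $Z \hookrightarrow Z(u)$ is a closed immersion of schemes affine over $X$. Finally, because $Z \subset \A^n_X$, the section $t_\infty^d$ provides a canonical trivialization $\cO(d)|_Z \simeq \cO_Z$, yielding the identification with $\Gamma(Z, \cO)$; the case of the direct sum $\cO(d)^{\oplus l}$ follows by applying the argument in each summand.

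The principal technical point I expect to be the main obstacle is the verification that $(u_1, \dots, u_n)$ forms a regular sequence on all of $\PP^n_X$ rather than merely cutting out a codimension-$n$ subscheme. Over a non-Cohen--Macaulay base $X$ this does not follow from codimension alone, but it does follow from the monic-in-$t_i$ structure of each $u_i|_{\A^n_X}$ combined with the reduction modulo $t_\infty$ sketched above. After that confirmation, the remainder of the proof is purely formal Castelnuovo--Mumford bookkeeping, and the explicit value $h = n d_u - n - 1$ (independent of $X$, $u$, and $Z$) drops out.
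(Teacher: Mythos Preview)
Your argument is correct and yields the sharper conclusion $h = n d_u - n - 1$, but the route is genuinely different from the paper's. The paper never touches the Koszul complex or regularity of $(u_1,\dots,u_n)$. Instead it introduces the universal parameter space $\Gamma_u$, the affine space over the base whose points are tuples $u=(u_i)$ with $u_i\big|_{\PP^{n-1}}=t_i^{d_u}$, and applies Serre vanishing \emph{once} to the closed immersion $Z(\mathbf u)\hookrightarrow \PP^n_{\Gamma_u}$ for the tautological section $\mathbf u$. Any particular $(X,u)$ arises by pulling back along a classifying map $\Upsilon\colon X\to\Gamma_u$, and since the relevant pushforwards $(p_X)_*\cO(d)$ and $(q_{u,X})_*\cO(d)$ are literally $\Upsilon^*$ of their universal counterparts, surjectivity over $\Gamma_u$ transports to $X$ by right-exactness of pullback. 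This buys uniformity in $X$ for free and avoids any verification that $(u_1,\dots,u_n)$ is a regular sequence over a possibly non-Cohen--Macaulay base.

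Your approach trades that universal step for a direct computation, and the payoff is an explicit numerical bound rather than an existential ``large enough $d$''. The cost is exactly the point you flag: one must check that the Koszul complex really resolves $\cI_{Z(u)}$ over an arbitrary affine $X$. Your sketch via leading terms is on the right track---the initial forms $t_i^{d_u}$ with respect to the total-degree filtration on $A[t_1,\dots,t_n]$ are a regular sequence over any ring $A$, and a standard associated-graded argument (or the observation that monic coprime leading monomials make $\{u_i\}$ a Gr\"obner basis over any base) then gives regularity of $(u_1,\dots,u_n)$ itself. The phrase ``monic in $t_i$'' alone is a little loose, since monicity in separate variables does not by itself force a regular sequence; it is the leading-term structure under total degree that does the work.
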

\begin{proof}
It follows from the conditions on $u$ that 
$Z(u)$ and consequently $Z$ is finite over an affine scheme $X$.
Hence $\Gamma(Z(u),\cO(d))\simeq\cO(Z(u))\twoheadrightarrow \cO(Z)\simeq \Gamma(Z,\cO(d))$.
So without less generality we can assume $Z=Z(u)$.

Consider the $k$-affine space $\Gamma_u$ with closed points being $\{u=(u_i)|u_i\in \Gamma(\PP^n), u_i\big|_{\PP^{n-1}}=t_i^{d_u}\}$. Let $\bfu\in \Gamma(\PP^n_{\Gamma_u},\cO(d_u)^n)$ denotes the universal vector section.
Then by lemma \ref{cor:SerresTh} for all large enough $d$ the homomorphism $\Gamma(\PP^n_{\Gamma_u}, \cO(d)^l ) \to \Gamma(Z(\bfu), \cO(d)^l )\simeq \Gamma(Z, \cO )$ is surjective.

For any affine $X$ 
denote by $p_X\colon \PP^n_X\to X$ the canonical projection,
and denote by $q_{u,X}\colon Z(u)\to X$ for a given vector of sections $u$.
Then since $X$ is affine 
$\Gamma(\PP^n_X, \cO(d)^l ) \twoheadrightarrow \Gamma(Z(u), \cO(d)^l )$ 
iff
$(p_X)_*(\cO(d)^l ) \twoheadrightarrow (q_{u,X})_*(\cO(d)^l )$.
In the same times we have $$(p_X)_*(\cO(d))=\Upsilon^*((p_{\Gamma_u})_*(\cO(d))), (q_{u,X})_*(\cO(d))=\Upsilon^*((p_{\bfu,\Gamma_u})_*(\cO(d))).$$
Hence 
$(p_{\Gamma_u})_*(\cO(d)^l ) \twoheadrightarrow (q_{\bfu,\Gamma_u})_*(\cO(d)^l )$
implies
$(p_X)_*(\cO(d)^l ) \twoheadrightarrow (q_{u,X})_*(\cO(d)^l )$.
\end{proof}

\begin{lemma}\label{lm:SurFrafftoFr}
Let $X$ and $P$ are affine schemes. Assume $Z$ is a closed subscheme in $\A^n_{X\times P}$ finite over $X\times P$, 
and $s=(s_i)_{i=1,\dots l}$ be a vector of sections $s_i\in \GlSX{M+n}{X}{d}$ such that 
$(Z(s)\cap \A^{M+n}_X)\times_{\A^M} P=Z\amalg \hat Z$,
$\hat Z\cap (\overline{\hat Y}\cup (\overline{Y}\setminus Y))=\emptyset$, 
and 
$Z\subset X\times P\times \A^{n-N}\times Y$.

Then 
$\exists h_e, h_u$ $\forall d_e>h_e, d_u>h_u$ $\exists h_b, h_w$ $\forall d_b>h_b, d_w>h_w\in \mathbb Z$
there is a vector of sections $a = (e,s, u, c, b, w,\ome, f, h, z, \oveb, \ovec,\ovew)$ as in def. \ref{def:Fraff} 
such that equalities \eqref{eq:defFraff} hold. 
Moreover the integers $h_e, h_u, h_b, h_w$ can be chosen independently on the affine schemes $X, P, Y$.
\end{lemma}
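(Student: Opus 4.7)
The plan is to construct the data in the order dictated by the nested quantifiers: first $u$ and $e$ (whose thresholds $h_u, h_e$ depend only on the numerical data $n, M, d_y, d_p, d$), then the ``certificate'' sections $b, c, f, \ome, w, h, z, \oveb, \ovec, \ovew$ that witness each of the three equations of \eqref{eq:defFraff} as a membership in a graded submodule of $\Gamma(\PP^{M+n}_X, \cO)_{\bullet}$. Both stages rest on the Serre-type surjectivity of global sections of $\cO(d)$ onto finite closed subschemes of $\PP^{M+n}$ used in the preceding lemma.

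\emph{Stage 1.} For each $k$ I would produce $u_k \in \Gamma(\PP^{M+n}_X, \cO(d_u))$ by prescribing its restriction on two disjoint closed subschemes of $\PP^{M+n}_X$: on $\PP^{n-1} = Z(t_\infty, t_1, \dots, t_M)$ it must equal $t_k^{d_u}$, while on the finite affine subscheme $Z \amalg \hat Z$ it must vanish. Disjointness is automatic because $\PP^{n-1} \subset \{t_\infty = 0\}$ lies in the hyperplane at infinity, so a common lift exists for $d_u$ above a threshold $h_u$ by Serre surjectivity on the finite subscheme $\PP^{n-1} \sqcup (Z \amalg \hat Z)$. Once $u$ is fixed, $e$ is built by the same device with prescriptions $e|_Z = 0$ and $e|_{\hat Z} = t_\infty^{d_e}|_{\hat Z}$, solvable for $d_e \geq h_e$ since $Z$ and $\hat Z$ are disjoint finite closed subschemes. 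Both thresholds can be made independent of $X, P, Y$ by carrying out the construction universally over the affine moduli scheme parametrising the input data $(s, y, p)$ and then base-changing to $X$.

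\emph{Stage 2.} With $u, e$ in hand, each equation of \eqref{eq:defFraff} asserts that a distinguished section $\lambda$ on $\PP^{M+n}_X$ --- namely $t_\infty^{d + d_b - 2d_e}(t_\infty^{d_e} - e) e$ for the first, $u_k t_\infty^{d_c + d - d_u}$ for the second, and $t_\infty^{d + d_w - d_y} y_j$ for the third --- lies in the graded submodule generated by $(s, p, y^2)$, $(s, p, y^2)$, and $(s, p, u, e^2)$ respectively. By Stage 1 together with the hypotheses $Z \subset P \times \A^{n-N} \times Y$ and $\hat Z \cap (\overline{\hat Y} \cup (\overline{Y} \setminus Y)) = \emptyset$, each such $\lambda$ vanishes scheme-theoretically on the corresponding finite closed subscheme of $\PP^{M+n}_X$: the $t_\infty$-twist kills the behaviour at infinity in the first two equations, and in the third $y_j$ vanishes on $Z(s, p, u, e^2) \subset Z \subset Y \times \A^{n-N}$. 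Serre's theorem applied to the multiplication-by-generators map of the relevant coherent sheaves then produces the required coefficients for all $d_b \geq h_b$ and $d_w \geq h_w$, where $h_b, h_w$ depend only on $d_e, d_u$ and the numerical data.

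The main obstacle I anticipate is the scheme-theoretic bookkeeping in Stage 2: after the $t_\infty$-twist, one must verify that $\lambda$ vanishes on the correct closed subscheme, not merely set-theoretically, so that ideal membership follows from Serre vanishing rather than from a Nullstellensatz argument. Concretely one needs to check that $Z \amalg \hat Z$ is a disjoint component of $Z(s, p, y^2) \cap \A^{M+n}_X$ and that $Z(s, p, u, e^2) \subset Z$ --- both facts that are closely tied to the structural identities in lemma \ref{lm:FrafftpFr}. Once that is settled, the uniformity of $h_e, h_u, h_b, h_w$ in $X, P, Y$ follows from running the argument once over the universal moduli scheme of input data and then base-changing.
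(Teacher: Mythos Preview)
Your approach matches the paper's: first produce $u$ and $e$ by Serre surjectivity onto disjoint closed subschemes (the paper applies Corollary~\ref{cor:SerresTh} to $\PP^{n-1+M}_X \amalg \overline{Z}$ for $u$ and to $\overline{Z} \cup \overline{\hat Z}$ for $e$), then obtain the remaining coefficient sections from the surjectivity of the multiplication-by-generators map onto the ideal sheaf of the relevant vanishing locus (Lemmas~\ref{lm:StSurIdeals} and~\ref{lm:lift} in the paper). The paper's proof is quite terse and does not spell out the scheme-theoretic vanishing checks that you correctly flag as the main obstacle; your universal-moduli argument for uniformity in $X,P,Y$ is likewise consonant with the argument in the lemma immediately preceding this one.
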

\begin{proof}
Denote by $ \overline{Z}$ and $ \overline{Z}$ the closures of $Z$ and $\hat Z$.
Applying Serre's theorem \ref{cor:SerresTh} 
to the closed subschemes $\PP^{n-1+M}_X\amalg \overline{Z}$ and $\overline{Z}\cup \overline{\hat Z}$ in $\PP^n_{X\times P}$ we 
find $h_u$ and $u$, 
$h_e$ and $e$. The choice in given by Serre's theorem is independent form $Y$ and moreover by lemma \ref{lm:StSurIdeals} 
one can see that the choice is independent form the affine schemes $X$ and $P$.
Then using lemmas \ref{lm:StSurIdeals}, \ref{lm:lift}, 
we find $h_b.h_w$ and $(c, b, w,\ome, f, h, z, \oveb, \ovec,\ovew)$.
\end{proof}

\begin{definition}\label{def:hdHpd}
For any $n$ and $d$  let us chose and fix some
$h_e(n,d)$ and $h_u(n,d)$ and setting $d_e=h_d$, $d_u=h_u$ choose some $h_b(n,d)$ and $h_w(n,d)$
such that lemma \ref{lm:SurFrafftoFr} is satisfied,
and the homomorphisms
\begin{gather*}
\begin{array}{rcl}
\Gamma(\PP^{n}_{\tilde X}, \cO(d_e))&\to& \Gamma(Z(\tilde s),\cO(d_b+d))
\\
\Gamma(\PP^n_{\tilde X},\cO(d_b)^{n+l}\oplus \cO(d_f)^{n-r} )&\to& 
\Gamma(\PP^n_{\tilde X},\cI(Z(\ovovs))(d_b+d)) \colon\\ 
(\der b,\der f)&\mapsto& \sum_{i=1}^l \der b_i \tilde s_i + \sum_{j=1}^{n-r} \der f_j y_j^2,
\\
\Gamma(\PP^n_{\tilde X},\cO(d_w)^{n+l}\oplus\cO(d_\ome))&\to& \Gamma(\PP^n_X,\cI(\tilde Z)(d_w+d)) \colon\\ 
(\der w,\der \ome)&\mapsto& \sum_{i=1}^{n+l} \der w_{i,j} s_i + \der\ome_j e
\\
\Gamma(\PP^n_X,\cO(d_u)) &\twoheadrightarrow& \Gamma(Z(\ovovs)\amalg \PP^{n-1}_X,\cO(d_u)) \colon\\ 
(\der u_k)&\mapsto& (u_k\big|_{Z(\ovovs)},u_k\big|_{\PP^{n-1}_X}) 
\end{array}
\end{gather*}
are surjective,
for any 
affine $\tilde X$,
$\tilde Z\subset Z(\tilde s)$, 
$\tilde s\in \Gamma(\PP^n_{X\times P},\cO(d)^{n+l})$,
$\tilde e\in \Gamma(\PP^n_{X\times P},\cO(d_e))$ 
$\tilde u\in \Gamma(\PP^n_{X\times P},\cO(d_e))$ 
$\ovovs = \tilde s\big|_{X\times \overline {Y}}$. 
Such $h_*$ exists by lemmas \ref{lm:StSurIdeals}, \ref{lm:lift}.

Denote by $\cFs_{n,d}\cFs_{\alpha(n,d)}$, $\cFraff_{n,d}=\cFraff_{\alpha(n,d)}$, 
where $\alpha(n,d) = (n,d,d_e(n,d), d_u(n,d), d_b(n,d), d_w(n,d) )$
and similarly defined sheaves $\Fraff_{n,d}$.
Denote by 
$\cFs_{n}=\cFs_{n,3^n}$, $\cFraff_{n}=\cFraff_{n,3^n}$, $\Fraff_{n}=\Fraff_{n,3^n}$.
\end{definition}

\subsubsection{Stabilisation.}

Consider the universal vector of sections 
$\bfa = (\bfe,\bfs, \bfu, \bfc, \bfb, \bfw,\bfom, \bff, \bfh, \bfz, \bfoveb, \bfovec,\bfovew)$
where $\cFs=\cFs_{n,d}(P,Y\wT{l})$ for some $n,d$.
Denote $Z=Z(\bfs,\bfe,\bfu)$.

Consider the sections 
$$
\bfs^\prime_i\in \GlSX{n+1}{\cFs}{3d}, i=1\dots, l+1, \, 
\bfs^\prime_i = \bfs_i (t_\infty^{2d}- \bfs_i^2), \text{ for } i\leq l, \text{ and } \bfs^\prime_{l+1} = t_{n+1} (t_\infty^{3d-1}-t_{n+1}^{3d-1}), 
$$
where $d_e=h^e_{n,d}$, $\check{d}_e=d_e+2dl + 3d-1$, and where $\bfs_i$ and $\bfe$ are considered as sections on $\PP^{n+1}_\cFs$ in a standard way via the inclusion $\PP^n_\cFs\to \PP^{n+1}_\cFs$. 
Then one can see 
$
Z(\bfs^\prime) = Z \coprod Z^\prime$, $\bfs^\prime_i\big|_{Z(I^2(Z))}=\bfs_i t^{2d}_\infty\big|_{Z(I^2(Z))}, 
$
where $Z$ is considered as a closed subscheme in $\PP^{n+1}_\cFs$ via the inclusion $\PP^{n}_\cFs\hookrightarrow\PP^{n+1}_\cFs$.

Then by lemma \ref{lm:SurFrafftoFr}  there is a vector of sections $
\bfa^\prime = (\bfe^\prime,\bfs^\prime, \bfu^\prime, \bfc^\prime, \bfb^\prime, \bfw^\prime,\bfom^\prime, \bff^\prime, \bfh^\prime, \bfz^\prime, \bfoveb^\prime, \bfovec^\prime,\bfovew^\prime)
\in \Fraff_{n+1,3d}(\cFs\times P,Y\wT{l})$
The section $\bfa^\prime$ gives us a regular map $\varphi_{n,d}\colon \cFs_{n,d}(P,Y\wT{l})\to \cFs_{n+1,3d}(P,Y\wT{l})$ that induces the map 
$$ \varphi_{n,d}\colon \cFraff_{n,d}(P,Y\wT{l})\to \cFraff_{n+1,3d}(P,Y\wT{l})$$ 

\begin{definition}\label{def:stabFraff}
For any $Y$ and $P$ as in the context \ref{context:Y0}
Define the pairs of ind-schemes $\cFraff(P,Y\wT{l})$ and the Nisnevich sheaf $\Fraff(P,Y\wT{l})$
$$\cFraff(P,Y\wT{l})= \varinjlim\limits_{n} \cFraff_{n,3^n}(P,Y\wT{l}),\,
\Fraff(P,Y\wT{l})= \varinjlim\limits_{n} \Fraff_{n,3^n}(P,Y\wT{l})$$ 
with the morphisms given by $\varphi_{n, 3^n}$.
Then 
the sheaf $Fr^{aff}(-\times P,Y\wT{l})$ is represented by the pair of ind-schemes $\varinjlim_n\cFraff_{n,3^n}(P,Y\wT{l})$.
\end{definition}

\section{Smoothness}

\begin{context}
\label{context:Y00}
Assume the context \ref{context:Y0} and let $q=0$, $M=0$, $P=pt$.
Moreover assume that $Y$ is affine and $r=\dim Y$.
\end{context}

Denote $\Fraff(Y\wT{l})=\Fraff(pt,Y\wT{l})$,
$\cFraff(Y\wT{l})=\cFraff(pt,Y\wT{l})$.
The goal of the section is the following
\begin{proposition}\label{prop:smoothness}
For any $n,d\in \mathbb Z$, and a smooth affine scheme $Y$ as in context \ref{context:Y00},   
the quasi-affine scheme $\cFrsaf_{n,d}(Y\wT{l})$ is smooth.
\end{proposition}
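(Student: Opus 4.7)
The plan is to prove smoothness of $\cFraff_{n,d}(Y\wT{l})$ by the Jacobian criterion, applied to its presentation as a locally closed subscheme of an affine space cut out by the three polynomial identities of \eqref{eq:defFraff} together with the open conditions of Definition~\ref{def:Fraff}. Concretely, in context~\ref{context:Y00} (where $q=0$ so $\bar b,\bar c,\bar w$ are absent), $\cFraff_{n,d}$ sits inside the affine space of parameter vectors $(e,s,u,c,b,w,\omega,f,h,z)$, and is carved out by the scalar equations obtained from \eqref{eq:defFraff} by reading off the coefficients of the polynomial identities in $\Gamma(\PP^{n},\cO(d+d_b))$, $\Gamma(\PP^{n},\cO(d+d_c))^n$, and $\Gamma(\PP^{n},\cO(d+d_w))^{n-r}$, together with the affine constraint $u_k\big|_{\PP^{n-1}}=t_k^{d_u}$. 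Smoothness will follow once the Jacobian of this system is shown to have full (surjective) rank at every point of $\cFraff_{n,d}(Y\wT{l})$.

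The structural observation driving the proof is that the three equations of \eqref{eq:defFraff} are linear in disjoint sets of auxiliary variables -- the first in $(b,f)$, the second in $(c,z)$, the third in $(w,h,\omega)$ -- while depending polynomially on $(e,s,u)$. Accordingly, if one groups the Jacobian columns as $(e,s,u\,|\,b,f\,|\,c,z\,|\,w,h,\omega)$ and the rows by the three equations, the last three column-blocks form a block-diagonal matrix whose diagonal blocks are precisely the linear maps $A_1,A_2,A_3$ appearing in Definition~\ref{def:hdHpd} -- namely $(\delta b,\delta f)\mapsto \sum \delta b_i s_i+\sum \delta f_j y_j^2$ and its two analogues -- specialised to the point in question. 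Hence if each $A_i$ is surjective at every closed point of $\cFraff_{n,d}$, then so is the whole Jacobian, and the subscheme is smooth of the expected codimension.

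The pointwise surjectivity of the $A_i$ is exactly what Definition~\ref{def:hdHpd} and Lemma~\ref{lm:SurFrafftoFr} were set up to guarantee. Applied to the residue scheme of a point $(e_0,s_0,u_0,\ldots)\in \cFraff_{n,d}$, these results assert that each $A_i$ surjects onto the corresponding subspace $\Gamma(\PP^n,\cI(Z(\overline{\overline s}))(d+d_b))$, and analogously for $A_2,A_3$. The crucial feature -- already engineered into the paper -- is that the thresholds $h_e,h_u,h_b,h_w$ were taken \emph{uniformly} in the affine test base, so surjectivity holds pointwise everywhere on $\cFraff_{n,d}$, not merely generically. It remains only to verify that the right-hand sides $g_i$ of the three equations actually lie in these ideal subspaces; for $A_1$, combining Lemma~\ref{lm:FrafftpFr} (giving $Z(s)\cap\A^{M+n}=Z\amalg\hat Z$ with $Z\subset Z(e)$ and $\hat Z\subset Z(t_\infty^{d_e}-e)$) with the open conditions $Z(s)\cap\overline{\hat Y}=\emptyset$ and $Z(s)\cap(\overline Y\setminus Y)=\emptyset$ built into $\cFraff$ shows that $(t_\infty^{d_e}-e)e$ vanishes on the affine part of $Z(\overline{\overline s})$, while the $t_\infty^{d+d_b-2d_e}$-prefactor of the LHS kills the locus at infinity; $A_2$ and $A_3$ are handled analogously, using the $Z(I^2(Z(e,s,u)))$-surjectivity clause in the definition of $\cFraff$.

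Assembling these ingredients, the Jacobian of the defining equations has full surjective rank at every point of $\cFraff_{n,d}(Y\wT{l})$, and the Jacobian criterion yields smoothness. The main obstacle is exactly the one the author has already engineered around: one needs \emph{pointwise} surjectivity of the $A_i$ over the whole of $\cFraff_{n,d}$, not merely generic surjectivity, and this is precisely why the Serre-vanishing thresholds in Definition~\ref{def:hdHpd} were chosen uniformly in the affine test base rather than chosen separately for each base scheme.
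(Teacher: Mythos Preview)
Your overall framing---present $\cFraff_{n,d}$ as $A_n^{-1}(0)$ for a polynomial map between affine spaces and verify that $\der A_n$ is surjective at every point---matches the paper. But the core step of your argument does not go through as written.

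You claim that the block of the Jacobian coming from the auxiliary variables $(b,f\mid c,z\mid w,h,\ome)$ already surjects onto the full target. It does not: the linear map $(\der b,\der f)\mapsto \sum \der b_i s_i+\sum \der f_j y_j^2$ has image contained in $\Gamma(\PP^n,\cI(Z(\ovovs))(d+d_b))$, and likewise the $(\der c,\der z)$ and $(\der w,\der h,\der\ome)$ blocks land in the corresponding ideal subspaces. You notice this yourself, but then propose to finish by checking that the \emph{values} of the defining map lie in those ideals. That is the wrong verification: it is relevant to nonemptiness of the fibre, not to surjectivity of the differential. What is needed is that the Jacobian, modulo the ideal subspace, hits the quotients $\Gamma(Z(\ovovs),\cO(d+d_b))$, $\Gamma(Z(\ovovs)\amalg\PP^{n-1},\cO(d_u))$, and $\Gamma(\bfZ,\cO(d+d_w))$---and this can only come from the $\der e,\der u,\der s$ columns.

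This is exactly how the paper proceeds (Lemmas~\ref{lm:surEq1}, \ref{lm:surEq34}, \ref{lm:surEq2}): for Eq$_1$ one uses $\der e$ together with the invertibility of $(t_\infty^{d_e}-2e)$ on $Z(\ovovs)$; for Eq$_{3,4}$ one uses $\der u$; and for Eq$_2$ one uses $\der s$ through the term $\sum w_{i,j}\,\der s_i$. The last of these is the delicate point you are missing entirely: to show that $\der s\mapsto \sum w_{i,j}\,\der s_i$ surjects onto $\Gamma(\bfZ,\cO(d+d_w))^{n-r}$ one needs the matrix $(w_{i,j}|_{\bfZ})$ to define a surjection $\cO(\bfZ)^{n+l}\to\cO(\bfZ)^{n-r}$, and this is exactly Lemma~\ref{lm:SmSurGlSRestriction} (via Lemma~\ref{lm:SmSurRegRestriction}), which is where the smoothness of $Y$ enters. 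Your argument never invokes the smoothness of $Y$, which is a sure sign that something essential has been skipped. The $Z(I^2(Z(e,s,u)))$-surjectivity clause you cite from Definition~\ref{def:Fraff} concerns the restriction map in degree $d$ and does not substitute for this step.
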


\subsection{Preliminarily lemmas}

\begin{lemma}\label{lm:SmSurRegRestriction}
Let $f=(f_i)_{i=1,\dots r}$, $y_i\in \cO(\A^n_X)$, for an affine $k$-scheme $X$.
Suppose $Z(f)=Y\amalg \hat Y\subset \A^n_X$, and $Y$ is smooth over $X$.
Let $\varphi=(\varphi_i)_{i=1,\dots l}$, $\varphi_i\in \cO(\A^n_X)$ such that $Z(\varphi)=Z\amalg \hat Z$, $Z \subset Y$, 
and let
\begin{equation}\label{eq:As=y}
\sum\limits_{i=1,\dots l} w_{i,j} \cdot (\varphi_i\big|_{Z}) = f_j\big|_{Z(I^2(Z))},\;
w_{i,j}\in \cO(Z), 1\leq i\leq l, 1\leq j\leq r.\end{equation} 

Then the homomorphism $A\colon \cO(Z)^l\to \cO(Z)^r$
given by the matrix $A=(w_{i,j})_{{ i=1,\dots l } \atop { j=1\dots r }}$ is surjective.
\end{lemma}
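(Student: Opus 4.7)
The plan is to prove surjectivity of the $\cO(Z)$-linear map $A$ by reducing it to a fiberwise linear-algebra statement at every maximal ideal of $\cO(Z)$ and then invoking Nakayama's lemma.

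First, I would linearize the given relation at a point $z\in Z$. Pick lifts $\tilde w_{i,j}\in \cO(\A^n_X)$ of $w_{i,j}\in\cO(Z)$. Since $Z$ is a disjoint component of $Z(\varphi)$, on a Zariski neighbourhood of $Z$ one has $I(Z)=(\varphi_1,\ldots,\varphi_l)$, and the equality $\sum_i \tilde w_{i,j}\varphi_i \equiv f_j \pmod{I(Z)^2}$ holds there. Localizing at $z$, using $I(Z)\subseteq m_z$ (so $I(Z)^2\subseteq m_z^2$) and the fact that $d_z\tilde w_{i,j}\cdot \varphi_i \in m_z\cdot m_z = m_z^2$, one obtains the linearized identity
\[
d_z f_j \;=\; \sum_{i=1}^{l} w_{i,j}(z)\, d_z\varphi_i \qquad\text{in } T^{\ast}_z(\A^n_X/X)=m_z/m_z^2.
\]

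Second, I would exploit the smoothness of $Y$. Since $Y$ is smooth of codimension $r$ over $X$ inside the $X$-smooth ambient $\A^n_X$ and is a disjoint component of $Z(f)$, the embedding $Y\hookrightarrow \A^n_X$ is locally cut out by the regular sequence $f_1,\ldots,f_r$. In particular, at every $z\in Y$ the Jacobian $(\partial f_j/\partial x_k)(z)$ has full rank $r$, so the covectors $d_z f_1,\ldots,d_z f_r$ are linearly independent in $T^{\ast}_z(\A^n_X/X)$.

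Combining the two facts gives fiberwise surjectivity. Suppose $(\alpha_j)_j\in k(z)^r$ satisfies $\sum_j \alpha_j w_{i,j}(z)=0$ for all $i$. Multiplying the linearized identity by $\alpha_j$ and summing in $j$ yields $\sum_j \alpha_j d_z f_j =0$ in $T^{\ast}_z(\A^n_X/X)$, which by the linear independence above forces $\alpha_j=0$ for all $j$. Thus the transpose $A(z)^{T}\colon k(z)^r\to k(z)^l$ is injective, equivalently $A(z)\colon k(z)^l\to k(z)^r$ is surjective. The cokernel of $A$ is finitely generated over $\cO(Z)$ and vanishes modulo every maximal ideal, so Nakayama's lemma yields $\operatorname{coker}(A)=0$, i.e.\ $A$ is surjective.

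The main subtlety lies in the linearization step: one must verify that a relation originally living in the conormal module $I(Z)/I(Z)^2$ with $\cO(Z)$-coefficients descends to an honest equality of covectors in the ambient cotangent space $T^{\ast}_z(\A^n_X/X)$ of the smooth scheme $\A^n_X$. The remaining ingredients -- the regular-sequence structure coming from smoothness of $Y$, and Nakayama for finitely generated modules -- are standard.
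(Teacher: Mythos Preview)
Your proof is correct and follows essentially the same route as the paper: reduce surjectivity of $A$ to the fibrewise statement via Nakayama, linearize the relation \eqref{eq:As=y} at a closed point $z\in Z$ to obtain $d_z f_j=\sum_i w_{i,j}(z)\,d_z\varphi_i$, and then use smoothness of $Y$ (as a codimension-$r$ complete intersection) to conclude that the $d_z f_j$ are linearly independent, forcing $A(z)$ to have full rank. The only cosmetic difference is that the paper phrases the linearization in the conormal fibre $i_z^*(I(Z)/I(Z)^2)$ and then passes to $I(z)/I(z)^2$, whereas you lift the $w_{i,j}$ to the ambient ring and work directly in $m_z/m_z^2$; these are the same computation.
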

\begin{proof}
Nakoyama's lemma implies that the homomorphism $A$ is surjective if and only if 
for each point $x\in Z$ the rank of the matrix $A(x)$ is equal to $r$.
Assume that $z\in Z$ is a point such that $\rank A(z)<r$. Hence there is a linear function of $k(z)^r$ (a raw) $u=(u_i)_{i=1,\dots r}$ such that $u\cdot A(z)=0$.

Consider the sections $\der f_j$, $j=1\dots r$ of the cotangent vector space in $\A^n$ at $z$ 
$\hat T_z=I(z)/I(z)^2$ defined by the functions $f_i$, i.e. $\der f_j = f_j\big|_{Z(I^2(z))}\in I(z)/I(z)^2$, $j=1,\dots r$.
Since $Y$ is smooth, the sections $\der f_j$ are linearly independent. 
Consider then the sections of the fibre of the conormal sheaf of $Z$ at $z$: $\overline{f}_j\in i^*(\hat N_Z)$, $i\colon z\to Z$, $\hat Z=I(Z)/I^2(Z)$. Then $i^*(\hat N_Z)\simeq (I(Z)/I^2(Z))\otimes_{\cO(\A^n_X)} (\cO(\A^n_X)/I(z))$ and the homomorphism $(I(Z)/I^2(Z))\otimes_{\cO(\A^n_X)} (\cO(\A^n_X)/I(z))\to I(Z)/I^2(z)\to I(z)/I^2(z)$ implies that $\overline f_j$ are linearly independent too.

In the same time equalities \ref{eq:As=y} implies that $u\cdot \overline f = u\cdot A(z) \cdot \overline \varphi\big|_{Z(I^2(Z))}=0$, where $\overline f=(\overline f_j)_{j=1,\dots r}$, and
$\overline \varphi$ is the image of $\varphi$ in $i^*(I(Z)/I(Z)^2)$.
The contradiction finishes the proof.
\end{proof}

\begin{lemma}\label{lm:SmSurGlSRestriction}
Let $y=(y_i)_{i=1,\dots r}$, $y_i\in \Gamma(\PP^n_X, \cO(d_y))$ be a set of global sections on the projective space over an affine $k$-scheme $X$ such that  $Z(y\big|_{\A^n_X})=Y\amalg \hat Y\subset \A^n_X$, and $Y$ is smooth over $X$.
Let $s=(s_i)_{i=1\dots l}$, $s_i\in \Gamma(\PP^n_X\cO(d_s))$, be a set of global sections. Suppose that $Z(s)=Z\amalg \hat Z$, $Z \subset Y$ 
and let
$$
\sum\limits_{i=1,\dots l} w_{i,j} \cdot (s_i\big|_{Z(I^2(Z))}) = t_\infty^{d_w+d_s - d_y} y_j\big|_{Z(I^2(Z))}, 1\leq j\leq r,
.$$ 
where $w_{i,j}\in \Gamma(Z(I^2(Z)),\cO(d_w))$. 

Then the homomorphism $\Gamma(Z,\cO(d_s)^l)\to \Gamma(Z,\cO(d_y)^r)$
given by the matrix $A=(w_{i,j}\big|_{Z})$ is surjective.

\end{lemma}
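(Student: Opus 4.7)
The plan is a direct reduction to the affine version, Lemma \ref{lm:SmSurRegRestriction}, by restricting to the chart $\A^n_X = \{t_\infty \neq 0\} \subset \PP^n_X$ and trivializing all line bundles $\cO(d)$ by powers of $t_\infty$.

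First, since $Z \subset Y \subset \A^n_X$ by hypothesis, both $Z$ and its first-order thickening $Z(I^2(Z))$ lie entirely in the affine chart (they share the same underlying topological space), so every restriction occurring in the statement may be computed there. I dehomogenize by setting $\tilde y_j := y_j/t_\infty^{d_y}$, $\tilde s_i := s_i/t_\infty^{d_s}$, and $\tilde w_{i,j} := w_{i,j}/t_\infty^{d_w}$; these are regular functions on $\A^n_X$ (respectively on $Z(I^2(Z))$). Dividing the given identity through by $t_\infty^{d_w+d_s}$ yields the dehomogenized relation
$$\sum_{i=1}^{l} \tilde w_{i,j} \cdot \tilde s_i\big|_{Z(I^2(Z))} = \tilde y_j\big|_{Z(I^2(Z))}, \qquad 1 \le j \le r.$$

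Next I verify the hypotheses of Lemma \ref{lm:SmSurRegRestriction} with $f_j := \tilde y_j$, $\varphi_i := \tilde s_i$, and coefficient matrix $\tilde A = (\tilde w_{i,j}|_Z)$: the identity $Z(\tilde y) = Z(y|_{\A^n_X}) = Y \amalg \hat Y$ with $Y$ smooth over $X$ is exactly the first hypothesis, while $Z(\tilde s) = Z \amalg (\hat Z \cap \A^n_X)$ together with $Z \subset Y$ gives the second. Applying the affine lemma, I obtain surjectivity of $\tilde A \colon \cO(Z)^l \twoheadrightarrow \cO(Z)^r$.

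Finally I transport this back to the projective setting via the canonical trivializations $\Gamma(Z, \cO(d_s)^l) \xrightarrow{\sim} \cO(Z)^l$ and $\Gamma(Z, \cO(d_y)^r) \xrightarrow{\sim} \cO(Z)^r$ given by multiplication by $t_\infty^{-d_s}$ and $t_\infty^{-d_y}$ respectively. Under these identifications the matrix $A = (w_{i,j}|_Z)$ corresponds precisely to $\tilde A$, yielding the desired surjectivity. The only point requiring any care is matching the degree bookkeeping — the exponent $d_w + d_s - d_y$ of $t_\infty$ on the right side of the original identity — but this exponent is designed exactly so that the relation becomes degree-zero after dividing through, and no substantive obstacle arises.
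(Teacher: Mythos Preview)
Your proof is correct and follows essentially the same approach as the paper: both reduce to the affine Lemma~\ref{lm:SmSurRegRestriction} by observing that $Z\subset\A^n_X$ and trivializing the line bundles $\cO(d)$ via powers of $t_\infty$. Your write-up is in fact more explicit about the dehomogenization and slightly more careful in only using $Z\subset\A^n_X$ rather than all of $Z(s)$.
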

\begin{proof}[Proof of lemma \ref{lm:SmSurGlSRestriction}]
Since $Z(s)\subset \A^n$ then $i_Z^*(\cO(d))\simeq \cO(Z)$, where $i_Z\colon Z\to \PP^n$ is the canonical inclusion.
So lemma \ref{lm:SmSurRegRestriction} implies that the homomorphism 
of sheaves $i_Z^*(\cO(d_s))^l)\to i_Z^*(\cO(d_y))^r$ defined by $A$ is surjective.

Moreover, since $Z(s)$ is closed in $\PP^n_X$, it is projective over $X$, and since $Z(s)\subset \A^n$, then it is finite over $X$. Then $Z$ is finite over the affine scheme $X$, so it is affine too. Thus the homomorphism $\Gamma(Z,\cO(d_s)^l)\to \Gamma(Z,\cO(d_y)^r)$ is surjective.
\end{proof}

\subsection{Proof of the smoothness}
\begin{proof}[Proof of the proposition \ref{prop:smoothness}.]

Denote by $\Gamma_{n}^{source}$ the affine space that rational points are
$\Gamma(\PP^n,\cO(d_e)\oplus\cO(d)^l\oplus\cO(d_b)\oplus\cO(d_w)^{rl}\oplus\cO(d_{\ome})^r\oplus\cO(d_c)^{nl})$,
where $d=3^n$, $d_e = h^e_d$, $d_b=d_c=h_d$, $d_w=h^\prime_d$, $d_{\ome}=d_w+d - d_e$. 

It follows from the definition \ref{def:Fraff} that
$\cFraff_n(Y\wT{l}) = A^{-1}_n(0)$, where 
$A$ is a regular map of the affine spaces over the base
\begin{gather*}
\begin{array}{ccccl}
A_n&\colon& \Gamma^{source}_n&\to& \Gamma^{target}_n\\ 
&&(e,s,b,w,\ome,c)&\mapsto& 
(Eq_0,Eq_{1},Eq_{2,1}\dots, Eq_{2,n-r},Eq_{3,1},\dots Eq_{3,n}, Eq_{4,1}\dots ,Eq_{4,n})
\end{array}\\
\Gamma^{target}_n= 
\Gamma(\PP^n, \cO(d_b+d)\oplus \cO(d_w+d)^{n-r}\oplus \cO(d_c+d)^n)\oplus\Gamma(\PP^{n-1},\cO(d_u)^n)
\\ 
\begin{array}{lcrclclcl}
Eq_1 &=& t_\infty^{d+d_b-2d_e}(t_\infty^{d_e} - e) e & - & \sum\limits_{i=1}^l b_i s_i &-& \sum\limits_{i=1}^{n-r} f_j y_j^2,\\
Eq_{2,j}&=& t_\infty^{d+d_w-d_y} y_j &-& \sum\limits_{i=1}^l w_{i,j} s_i &-&\ome _j e^2 &-& \sum\limits_{k=1}^{n} h_{k,j} u_k,\\
Eq_{3,k}&=& u_k t_\infty^{d_c+d-d_u} &-& \sum\limits_{i=1}^l c_{i,k} s_i 
&-& \sum\limits_{j=1}^{n-r} z_{k,j} y_j^2,\\
Eq_{4,k} &=& (u_k-t_k^{d_u})\big|_{\PP^{n-1}},& 
,
\end{array}
\end{gather*} 
where $k=1,\dots,n$, $j = 1\dots n-r$.

Consider the differential homomorphism $\der A_{n}\colon T_{\Gamma^{source}_{n}}\to A_n^*(T_{\Gamma^{target}_{n}})$.
The claim is to prove that $\der A_{n}$ is surjective.
This is provided by def. \ref{def:hdHpd} and lemma \ref{lm:SmSurGlSRestriction}.
Let's write what is $\der A_{n,X}$ precisely:
\begin{gather*}
\begin{array}{ccccl}
\der A_{n}&\colon& T_{\Gamma^{source}_n} &\to& A_n^*(T_{\Gamma^{target}_n})\\ 
&& (\der e,\der s,\der b,\der w,\der \ome,\der c)
&\mapsto& 
(\der Eq_0, \der Eq_{1},\dots \der Eq_{2,j}\dots, \der Eq_{3,k},\dots , \der Eq_{4,k}\dots )
\end{array}\\
\begin{array}{lcl}
T_{\Gamma^{source}_n} &\simeq& 
\Gamma(\PP^n, \cO(d_e)\oplus\cO(d)^l\oplus\cO(d_b)^l\oplus\cO(d_w)^{rl}\oplus\cO(d_\ome)^r\oplus\cO(d_c)^{nl}),\\
A_n^*(T_{\Gamma^{target}_n} )&\simeq& 
\Gamma(\hat Y,\cO(d_e))\oplus\Gamma(\PP^n, \cO(d+d_b)\oplus \cO(d_w+d)^r\oplus \cO(d_u)^n )\oplus \Gamma(\PP^n, \cO(d_u)^n),
\end{array}\\ 
\begin{array}{lcrclclclcl}
\der Eq_1 &=& t_\infty^{d+d_b-2d_e}(t_\infty^{d_e} - 2e)\der e & - & \sum\limits_{i=1}^{n+l} (\der b_i s_i + b_i \der s_i), &-& \sum\limits_{i=1}^{n-r} \der f_j y_j^2,
\\
\der Eq_{2,j}&=& t_\infty^{d+d_w-d_y} \der y_j &-& \sum\limits_{i=1}^l (\der w_{i,j} s_i + w_{i,j} \der s_i) &-& \sum\limits_{k=1}^{n} (\der h_{k,j} u_k + h_{k,j} \der u_k) &-& \\
&&&-&\ome_j 2e\der e - \der\ome_j e^2&-&\sum\limits_{j=1}^{n-r} \der z_{k,j} y_j^2,
\\
\der Eq_{3,k}&=& t_\infty^{d_c+d-d_u} \der u_k &-& \sum\limits_{i=1}^l (\der c_{i,k} s_i + c_{i,k} \der s_i), 
\\
\der Eq_{4,k} &=&\der u_k\big|_{\PP^{n-1}},& 
\end{array}
\end{gather*}

Then results of lemmas \ref{lm:surEq1},\ref{lm:surEq2}, \ref{lm:surEq34}, which follow in the text, give us the surjections
\begin{equation}\label{eq:surEq}
\begin{array}{rcl}
\Gamma(\PP^n,\cO(d_e)\oplus \cO(d_b)^{n+l}\oplus \cO(d_f)^{n-r})&\twoheadrightarrow& 
\Gamma(\PP^n,\cO(d+d_b))\colon \\
(\der e,\der b)&\mapsto& (t_\infty^{d+d_b-2d_e}(t_\infty^{d_e} - 2e)\der e - \sum\limits_{i=1}^{n+l} \der b_i s_i - \sum\limits_{i=1}^{n-r} \der f_j y_j^2) \\
\Gamma(\PP^n,\cO(d)^{n+l}\oplus \cO(d_w)^{r(n+l)}\oplus \cO(d_\ome)^{r})&\twoheadrightarrow& \Gamma(\PP^n,\cO(d+d_w)^r)\colon \\
(\der s,\der w,\der \ome)&\mapsto& (\sum\limits_{i=1}^{n+l} (\der w_{i,j} s_i+w_{i,j} \der s_i) + \sum\limits_{k=1}^{n} \der h_{k,j} u_k + \der\ome_j e^2 )_{j=1,\dots, r} \\
\Gamma(\PP^n,\cO(d_u)^n\oplus \cO(d_c)^{n(n+l)})&\twoheadrightarrow& \Gamma(\PP^n,\cO(d_u)^n)\oplus \Gamma(\PP^n,\cO(d_u)^n)\colon \\
(\der u,\der c)&\mapsto& (\der u_k-\sum\limits_{i=1}^{n+l} \der c_{i,k} s_i-\sum\limits_{j=1}^{n-r} \der z_{k,j} y_j^2, \der u_k\big|_{\PP^{n-1}} )_{k=1\dots n}
\end{array}
\end{equation}
for any $a\in \cFraff(Y\wT{l})$.

Summing this surjections together we prove surjectivity of $\der A_{n}$. 

Consider the tangent sheaf (module) $T_{\cFrsaf_{n^\prime}}=\Ker(dA_{n})$.
By the above we get that
$(T_{\cFrsaf_{n}})$ 

 is a free coherent sheaf on $\cFraff_{d,n}(Y\wT{l})$ of the rank $dim_{\cFraff_n(Y\wT{l})}$, which yields that $\cFraff_{d,n}(Y\wT{l})$ is smooth. 

\newcommand{\bfZ}{\mathbf{Z}}
So all what we need is to prove surjectivity of the homomorphisms \ref{eq:surEq}.
Denote $X= \cFraff_{n,d}(Y\wT{l})$, $p_{n,X}\colon \PP^n\to X$.
Let $\bfa\in \Fraff(X,Y\wT{l})$ be the universal section, and denote $\bfZ = Z(\bfe,\bfs,\bfu)$.
Denote $\ovovbfs = \bfs\big|_{X\times Z^{(y^2)}}$, $\ovovbfe = \bfe\big|_{X\times Z^{(y^2)}}$,
$\ovovbfu = u\big|_{X\times Z^{(y^2)}}$,
where
$Z^{(y^2)}=Z(y_1^2,\dots y_{n-r}^2)$.

\begin{lemma}\label{lm:surEq1}
For any $a\in \Fraff(X,Y\wT{l})$, the homomorphism 
\begin{equation}\label{eq:sur1}
\begin{array}{ccc}(p_{n,X})_*(\cO(d_e)\oplus \cO(d_b)^{n+l}\oplus \cO(d_f)^{n-r})&\to& (p_{n,X})_*(\cO(d+d_b))\colon \\
(\der e,\der b)&\mapsto& t_\infty^{d+d_b-2d_e}(t_\infty^{d_e} - 2\bfe)\der e - \sum\limits_{i=1}^{n+l} \der b_i \bfs_i - \sum\limits_{i=1}^{n-r} \der f_j y_j^2 \end{array}
\end{equation}
is surjective.
\end{lemma}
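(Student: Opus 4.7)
The plan is to split the target via the short exact sequence of coherent sheaves on $\PP^n_X$
\begin{equation*}
0 \to \cI(Z(\ovovbfs))(d+d_b) \to \cO(d+d_b) \to \cO_{Z(\ovovbfs)}(d+d_b) \to 0,
\end{equation*}
push it forward along $p_{n,X}$, and show that the $(\der b, \der f)$--part of the map hits the left term while the $\der e$--part hits the right. Since $Z(\ovovbfs)$ is finite over $X$ by Lemma \ref{lm:FrafftpFr}\{1\}--\{2\} (applied with $P=pt$), the pushforward of the quotient is exact in global sections, and Serre vanishing together with the choices made in Definition \ref{def:hdHpd} guarantees that pushing forward preserves surjectivity.

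The first step is the $(\der b, \der f)$--contribution. By the second surjection in Definition \ref{def:hdHpd} (with $\tilde s = \bfs$ and $\tilde Z = Z(\ovovbfs)$), the map
\begin{equation*}
(\der b,\der f) \;\longmapsto\; \sum_{i=1}^{n+l} \der b_i \bfs_i + \sum_{j=1}^{n-r} \der f_j\, y_j^2
\end{equation*}
surjects $(p_{n,X})_*\bigl(\cO(d_b)^{n+l}\oplus\cO(d_f)^{n-r}\bigr)$ onto $(p_{n,X})_*\cI(Z(\ovovbfs))(d+d_b)$. Hence, modulo signs, the $(\der b,\der f)$--part of \eqref{eq:sur1} exhausts the kernel of the restriction to $Z(\ovovbfs)$.

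The second step is the $\der e$--contribution. On $Z(\ovovbfs) = Z \amalg \hat Z$ one has $\bfe\equiv 0$ on $Z$ and $\bfe\equiv t_\infty^{d_e}$ on $\hat Z$ by Lemma \ref{lm:FrafftpFr}\{2\}, so the function $t_\infty^{d+d_b-2d_e}(t_\infty^{d_e}-2\bfe)$ restricts to $\pm t_\infty^{d+d_b-d_e}$ on the two components; since $Z(\ovovbfs) \subset \A^n_X$ avoids $t_\infty = 0$, this is a unit on $Z(\ovovbfs)$. Thus multiplication by it induces an isomorphism $\cO_{Z(\ovovbfs)}(d_e) \simeq \cO_{Z(\ovovbfs)}(d+d_b)$. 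Composed with the first surjection of Definition \ref{def:hdHpd}, namely $\Gamma(\PP^n_X,\cO(d_e))\twoheadrightarrow \Gamma(Z(\ovovbfs),\cO(d_e))$, this shows that $\der e\mapsto t_\infty^{d+d_b-2d_e}(t_\infty^{d_e}-2\bfe)\,\der e$ surjects onto $(p_{n,X})_*\cO_{Z(\ovovbfs)}(d+d_b)$ after restriction.

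Combining the two by a standard diagram chase on the short exact sequence gives surjectivity of \eqref{eq:sur1}. The only delicate point, which I expect to be the main obstacle, is ensuring that the numerical choices $d_e, d_b, d_f, d_w$ fixed in Definition \ref{def:hdHpd} are simultaneously large enough that (a) $H^1(\PP^n_X,\cI(Z(\ovovbfs))(d+d_b))$ vanishes so that the pushforward sequence stays short exact, and (b) both surjections of Definition \ref{def:hdHpd} apply with $\tilde X = X = \cFraff_{n,d}(Y\wT{l})$ and $\tilde s = \bfs$, $\tilde e = \bfe$; this is exactly what the universal choice of $h_e, h_b, h_w$ in Definition \ref{def:hdHpd} was designed to accomplish, via Lemmas \ref{lm:StSurIdeals} and \ref{lm:lift}.
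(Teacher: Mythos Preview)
Your proof is correct and follows essentially the same route as the paper: split the target using the left exact sequence $0\to \Gamma(\PP^n_X,\cI(Z(\ovovbfs))(d+d_b))\to \Gamma(\PP^n_X,\cO(d+d_b)) \to \Gamma(Z(\ovovbfs),\cO(d+d_b))$, hit the ideal part with $(\der b,\der f)$ via Definition~\ref{def:hdHpd}, hit the restriction part with $\der e$ using invertibility of $t_\infty^{d+d_b-2d_e}(t_\infty^{d_e}-2\bfe)$ on $Z(\ovovbfs)$, and finish with the obvious diagram chase. One small remark: your concern about $H^1$ vanishing is unnecessary, since the diagram chase only needs the left exact sequence above (given $v$, lift its restriction via $\der e$, then the difference lies in the kernel and is handled by $(\der b,\der f)$); the paper likewise does not invoke right exactness.
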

\begin{proof}

It follows from lemma \ref{lm:FrafftpFr} that 
$Z(\ovovbfs)\subset X\times \A^n$, and $(\bfe^2-\bfe)\big|_{Z(\ovovbfs)}=0$.
Hence $t_\infty^{d+d_b-2d_e}(t_\infty^{d_e} - 2\bfe)$ is invertible on $Z(\ovovbfs)$.
Thus by definition \ref{def:hdHpd}
the homomorphisms 
\begin{gather*}
\Gamma(\PP^n_X, \cO(d_e))\to \Gamma(Z(\ovovbfs),\cO(d_b+d)) \colon \der e\mapsto (t_\infty^{d+d_b-2d_e}(t_\infty^{d_e} - 2\bfe)\der e)\big|_{\bfZ}
\\
\begin{array}{rcl}
\Gamma(\PP^n_{X\times P},\cO(d_b)^{n+l}\oplus \cO(d_f)^{n-r} )&\to& 
\Gamma(\PP^n_{X\times P},\cI(Z(\ovovbfs))(d_b+d)) \colon\\ 
(\der b,\der f)&\mapsto& \sum_{i=1}^l \der b_i \bfs_i + \sum_{j=1}^{n-r} \der f_j y_j^2,
\end{array}
\end{gather*}
are surjective.
Now the exact sequence
$$0\to \Gamma(\PP^n_X,\cI(Z(\ovovbfs))(d+d_b))\to \Gamma(\PP^n_X,\cO(d+d_b)) \to \Gamma(Z(\ovovbfs),\cO(d+d_b))
$$ 
yields that the homomorphism \eqref{eq:sur1} is surjective.
\end{proof}
\begin{lemma}\label{lm:surEq2}
The homomorphism 
\begin{equation}\label{eq:sur2}
\begin{array}{c}
\Gamma(\PP^n,\cO(d)^{n+l}\oplus \cO(d_w)^{r(n+l)}\oplus \cO(d_\ome)^{r})\twoheadrightarrow \Gamma(\PP^n,\cO(d+d_w)^r)\colon \\
(\der s,\der w,\der \ome)\mapsto (\sum\limits_{i=1}^{n+l} (\der w_{i,j} s_i+w_{i,j} \der s_i) + \sum\limits_{k=1}^{n} \der h_{k,j} u_k + \der\ome_j e^2 )_{j=1,\dots, r} 
\end{array}\end{equation}
is sujective.
\end{lemma}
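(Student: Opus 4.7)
The plan is to mimic the proof of lemma \ref{lm:surEq1}: for each $j=1,\dots,r$, split the target via the short exact sequence
$$0\to\Gamma(\PP^n_X,\cI(Z(\ovovbfs))(d+d_w))\to\Gamma(\PP^n_X,\cO(d+d_w))\to\Gamma(Z(\ovovbfs),\cO(d+d_w))$$
and solve sequentially on $Z$, on $\hat Z$, and on the ideal, using disjoint subsets of the variables $(\der s,\der w,\der\ome)$.

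Given a target $\eta=(\eta_j)_j\in\Gamma(\PP^n_X,\cO(d+d_w)^r)$, I would first use $\der s$ (with $\der w=\der\ome=0$) to match $\eta|_Z$ on the $Z$-component of $Z(\ovovbfs)=Z\amalg\hat Z$ (cf.\ lemma \ref{lm:FrafftpFr}) via the induced map $\der s\mapsto(\sum_i w_{i,j}|_Z\,\der s_i|_Z)_j$: restricting the third equation of \eqref{eq:defFraff} to $Z(I^2(Z))$ places us exactly in the setup of lemma \ref{lm:SmSurGlSRestriction}, which by smoothness of $Y$ forces $(w_{i,j}|_Z)$ to be a surjective $r\times(n+l)$ matrix of functions on $Z$. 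This choice leaves a residual $\xi=(\sum_i w_{i,j}|_{\hat Z}\,\der s_i|_{\hat Z})_j$ on $\hat Z$. Next, I would correct with $\der\ome$: by lemma \ref{lm:FrafftpFr}, $\bfe|_{\hat Z}=t_\infty^{d_e}$, so $\bfe^2$ is invertible on $\hat Z$, and for $d_\ome$ large enough Serre vanishing (as used in lemma \ref{lm:SurFrafftoFr}) gives the surjection $\Gamma(\PP^n_X,\cO(d_\ome))\twoheadrightarrow\Gamma(\hat Z,\cO(d_\ome))$, so a global $\der\ome_j$ can be chosen with $\der\ome_j\bfe^2|_{\hat Z}=\eta_j|_{\hat Z}-\xi_j$; since $\bfe^2|_Z=0$, this adjustment does not disturb the $Z$-match. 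Finally, the remaining discrepancy lies in $\cI(Z(\ovovbfs))(d+d_w)^r$, and varying $\der w$ via $\der w\mapsto(\sum_i\der w_{i,j}\bfs_i)_j$ surjects onto this ideal by the third surjection of definition \ref{def:hdHpd}.

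The main obstacle is the smoothness step on $Z$: one must confirm that restricting the third equation of \eqref{eq:defFraff} to $Z(I^2(Z))$ really produces the identity required by lemma \ref{lm:SmSurGlSRestriction}, i.e.\ that the extra summands $\sum_k h_{k,j}\bfu_k$ and $\ome_j\bfe^2$ drop modulo $I^2(Z)$. The second term vanishes because $\bfe\in I(Z)$ gives $\bfe^2\in I^2(Z)$; the first is more delicate, since $\bfu_k\in I(Z)$ but not necessarily $I^2(Z)$, so one must either enlarge the framing vector to $(s_1,\dots,s_{n+l},u_1,\dots,u_n)$ and invoke lemma \ref{lm:SmSurGlSRestriction} for the combined matrix $(w\,|\,h)|_Z$, or let $\der h$ play, alongside $\der w$, the role of a variable hitting the ideal. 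Everything else is routine bookkeeping modeled on lemma \ref{lm:surEq1}.
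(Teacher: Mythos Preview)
Your strategy follows lemma~\ref{lm:surEq1} too closely: you take the short exact sequence at $Z(\ovovbfs)$, whereas the paper takes it at the smaller scheme $\bfZ=Z(\bfe,\bfs,\bfu)$. With the paper's choice the argument collapses to two steps: the third surjection of definition~\ref{def:hdHpd} gives $(\der w,\der\ome)\twoheadrightarrow\Gamma(\PP^n_X,\cI(\bfZ)(d_w+d))$ directly, and lemma~\ref{lm:SmSurGlSRestriction} gives $\der s\twoheadrightarrow\Gamma(\bfZ,\cO(d_w+d))$. There is no need to treat a complementary component $\hat Z$ separately, and no need to argue about invertibility of $\bfe^2$ or Serre vanishing for $d_\ome$.

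Your three-step decomposition has a genuine gap at the ideal step. After you match on $Z\amalg\hat Z=Z(\ovovbfs)$, the residual lies in $\cI(Z(\ovovbfs))(d+d_w)$, and you propose to absorb it by $\der w\mapsto\sum_i\der w_{i,j}\bfs_i$. But the image of that map is only $\cI(Z(\bfs))(d+d_w)$, which is strictly contained in $\cI(Z(\ovovbfs))$ since the latter also needs the generators $y_j^2$; the third surjection of definition~\ref{def:hdHpd} does not say what you claim (it targets $\cI(\tilde Z)$ using both $\der w$ and $\der\ome$, not $\cI(Z(\ovovbfs))$ using $\der w$ alone). Throwing in $\der h_{k,j}u_k$ does not close this: it only enlarges the image to $\cI(Z(\bfs,\bfu))$, which is still not all of $\cI(Z(\ovovbfs))$. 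Switching the exact sequence to $\bfZ$, as the paper does, sidesteps this entirely because definition~\ref{def:hdHpd} was set up precisely to guarantee surjectivity onto $\cI(\bfZ)$.

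On the positive side, the obstacle you flag is real and the paper glosses over it: to invoke lemma~\ref{lm:SmSurGlSRestriction} one must know that restricting the third equation of~\eqref{eq:defFraff} to $Z(I^2(\bfZ))$ yields $\sum_i w_{i,j}s_i\equiv t_\infty^{\,\cdot} y_j$, and the summand $\sum_k h_{k,j}u_k$ does not obviously vanish modulo $I^2(\bfZ)$. Your proposed fix---enlarge the framing to $(s_1,\dots,s_{n+l},u_1,\dots,u_n)$ and apply lemma~\ref{lm:SmSurRegRestriction} to the combined matrix $(w\,|\,h)$---is the right way to make that step rigorous.
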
\begin{proof}
By
def. \ref{def:hdHpd} we see that
the homomorphism 
$$
\Gamma(\PP^n_X,\cO(d_w)^{n+l}\oplus\cO(d_\ome))\to \Gamma(\PP^n_X,\cI(\bfZ)(d_w+d)) \colon 
(\der w,\der \ome)\mapsto \sum_{i=1}^{n+l} \der w_{i,j} \bfs_i + \der\ome_j \bfe
$$
is surjective. 
Now by lemma \ref{lm:SmSurGlSRestriction} the homomorphism 
$$\Gamma(\PP^n_X,\cO(d)^{n+l}) \to \Gamma(\bfZ, \cO(d+d_w)) \colon (\der s) \mapsto \sum_{i=1}^{n+l} w_{i,j} \der s_i
$$
is surjective.
Whence 
the exact sequence 
$$
0\to \Gamma(\PP^n_X,\cI(\bfZ)(d+d_w))\to \Gamma(\PP^n_X,\cO(d+d_w)) \to \Gamma(\bfZ,\cO(d+d_w))
$$
yields that the homomorphism \eqref{eq:sur2} is surjective.
\end{proof}
\begin{lemma}\label{lm:surEq34}
The homomorphism 
\begin{equation}\label{eq:sur34}
\begin{array}{ccc}
\Gamma(\PP^n,\cO(d_u)^n\oplus \cO(d_c)^{n(n+l)})&\twoheadrightarrow& \Gamma(\PP^n,\cO(d_u)^n)\oplus \Gamma(\PP^n,\cO(d_u)^n)\colon \\
(\der u,\der c)&\mapsto& (\der u_k-\sum\limits_{i=1}^{n+l} \der c_{i,k} s_i-\sum\limits_{j=1}^{n-r} \der z_{k,j} y_j^2, \der u_k\big|_{\PP^{n-1}} )_{k=1\dots n}
 \end{array}\end{equation}
is surjective.
\end{lemma}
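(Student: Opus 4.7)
The plan is to mimic the strategy of Lemmas \ref{lm:surEq1} and \ref{lm:surEq2}: decompose the target into a piece supported on $\PP^{n-1}_X$ and a piece lying in an ideal sheaf on $\PP^n_X$, then hit each piece using a separate surjection from Definition \ref{def:hdHpd}. The ``complement'' that makes this decomposition work is $Z(\ovovbfs) \amalg \PP^{n-1}_X$: by Lemma \ref{lm:FrafftpFr}\{3\} one has $Z(\ovovbfs) \subset \A^n_X$, so $Z(\ovovbfs)$ is disjoint from the hyperplane at infinity.

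Since the map factors as a direct sum over $k = 1, \dots, n$, it suffices to treat each $k$ separately. Fix $k$ and a target pair $(\gamma_k, \delta_k)$, where $\gamma_k$ lives in the $\PP^n_X$-factor and $\delta_k$ in the $\PP^{n-1}_X$-factor. First, use the fourth surjection of Definition \ref{def:hdHpd},
\begin{equation*}
\Gamma(\PP^n_X, \cO(d_u)) \twoheadrightarrow \Gamma(Z(\ovovbfs) \amalg \PP^{n-1}_X, \cO(d_u)),
\end{equation*}
to pick $\der u_k$ whose restriction to $\PP^{n-1}_X$ equals $\delta_k$ and whose restriction to $Z(\ovovbfs)$ equals $t_\infty^{-(d_c+d-d_u)} \gamma_k|_{Z(\ovovbfs)}$; the negative power of $t_\infty$ makes sense because $t_\infty$ is a unit on $\A^n_X \supset Z(\ovovbfs)$.

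With this choice the residual $t_\infty^{d_c+d-d_u} \der u_k - \gamma_k$ vanishes on $Z(\ovovbfs)$ and therefore lies in $\Gamma(\PP^n_X, \cI(Z(\ovovbfs))(d_c+d))$. Then apply the second surjection of Definition \ref{def:hdHpd}, taken with $d_b = d_c$, to find $(\der c_{\cdot, k}, \der z_{k,\cdot})$ satisfying
\begin{equation*}
\sum_{i} \der c_{i,k} \bfs_i + \sum_{j} \der z_{k,j} y_j^2 = t_\infty^{d_c+d-d_u} \der u_k - \gamma_k.
\end{equation*}
The triple $(\der u_k, \der c_{\cdot,k}, \der z_{k,\cdot})$ is then a preimage of $(\gamma_k, \delta_k)$.

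The only geometric input beyond the surjections packaged into Definition \ref{def:hdHpd} is the disjointness $Z(\ovovbfs) \cap \PP^{n-1}_X = \emptyset$, which is what allows the two prescribed restrictions of $\der u_k$ to be imposed independently; this is precisely Lemma \ref{lm:FrafftpFr}\{3\}. I do not expect any step to present serious difficulty: once one identifies which of the four surjections in Definition \ref{def:hdHpd} plays which role, the argument becomes routine bookkeeping, strictly parallel to Lemmas \ref{lm:surEq1} and \ref{lm:surEq2}.
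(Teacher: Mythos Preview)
Your proposal is correct and follows essentially the same approach as the paper: you invoke Lemma~\ref{lm:FrafftpFr} to place $Z(\ovovbfs)$ inside $\A^n_X$ (hence disjoint from $\PP^{n-1}_X$), then use the two relevant surjections from Definition~\ref{def:hdHpd} --- the one onto $\Gamma(Z(\ovovbfs)\amalg\PP^{n-1}_X,\cO(d_u))$ for $\der u_k$ and the one onto $\cI(Z(\ovovbfs))(d_c+d)$ for $(\der c_{\cdot,k},\der z_{k,\cdot})$ --- exactly as the paper does. The only cosmetic difference is that the paper packages the final step via the short exact sequence $0\to\Gamma(\PP^n_X,\cI(Z(\ovovbfs))(d_c+d))\to\Gamma(\PP^n_X,\cO(d_c+d))\to\Gamma(Z(\ovovbfs),\cO(d_c+d))$, whereas you unwind this into an explicit choice of preimage; the content is identical.
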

\begin{proof}
By definition we have
$Z(\ovovbfs)=Z(\bfs,y_1^2,\dots y_{n-r}^2)$,
and 
it follows from lemma \ref{lm:FrafftpFr} that 
$Z(\ovovbfs)\subset X\times \A^n$.
According to def. \ref{def:hdHpd} we have surjections
$$
\begin{array}{rcl}
\Gamma(\PP^n_{X\times P},\cO(d_c)^{n+l}\oplus \cO(d_z)^{n-r} )&\to& 
\Gamma(\PP^n_{X\times P},\cI(Z(\ovovbfs))(d_c+d)) \colon\\ 
(\der c,\der z)&\mapsto& \sum_{i=1}^l \der c_i \bfs_i + \sum_{j=1}^{n-r} \der z_j y_j^2,
\\
\Gamma(\PP^n_X,\cO(d_u)) &\twoheadrightarrow& \Gamma(Z(\ovovbfs)\amalg \PP^{n-1}_X,\cO(d_u)) \colon\\
(\der u_k)&\mapsto& (u_k\big|_{Z(\ovovbfs)},u_k\big|_{\PP^{n-1}_X}) 
\end{array}
$$ 
where $k=1,\dots n$.
Hence homomorphism \eqref{eq:sur34} is surjective because of the exact sequence
$$
0\to \Gamma(\PP^n_X,\cI(Z(\ovovbfs))(d_u))\to \Gamma(\PP^n_X,\cO(d_u)) \to \Gamma(Z(\ovovbfs),\cO(d_u)).
$$
\end{proof}

\end{proof}

\section{Models for $M_{\PP^1}$ and $M^{\Gm}_{fr}$ via $Fr^{qaf}$.}

Lemma \ref{lm:FrafftpFr} yields that there is a natural map
\begin{equation}\label{eq:FrafftoFr}
\begin{array}{ccl}
\Fraff_n(-\times P,Y/U\wT{l})&\to& Fr_n(-\times P,Y/U\wT{l})\\
(e, s, u, w, \ome , c)&\mapsto&(Z(e,s), V, s_1/t_\infty^{d},\dots s_n/t_\infty^{d}, s_{l+1}/t_\infty^{d}, \dots s_l/t_\infty^{d}, pr_{Y})\\
&&V=\A^{n}_{-\times P}- Z(s)-Z(e,s)
\end{array}\end{equation}
see \eqref{eq:Freseqsmet} for $pr_{Y}$.
Hence there is a natural map $ \Fraff_n(-\times P,Y/U\wT{l})\to Fr^\text{nr}_n(-\times P,Y/U\wT{l})$.
The map \eqref{eq:FrafftoFr} is not agreed with the stabilisation according to the definition \ref{def:stabFraff}, 
but the map $\Fraff_n(-\times P,Y/U\wT{l})\to Fr^\text{nr}_n(-\times P,Y/U\wT{l})$ is agreed.
So we get the natural map
\begin{equation}\label{eq:FrafftoFrnr}
\Fraff(-\times P,Y/U\wT{l})\to Fr^\text{nr}(-\times P,Y/U\wT{l}).
\end{equation}
We are going to prove that this is an $\A^1$-Nis-equivalence.
The morphism is the composition of the following
\begin{multline}\label{eq:FrafftospltoFrnr}
\Fraff(-\times P,Y/U\wT{l})\to \Fragc(-\times P,Y/U\wT{l})\to Fr^\text{p-agc}(-\times P,Y/U\wT{l})\to\\
Fr^\text{nr-c}(-\times P,Y/U\wT{l})\to Fr^\text{nr}(-\times P,Y/U\wT{l}).
\end{multline}
according to the following definitions
\begin{definition}
$Fr^\text{p-agc}(-\times P,Y/U\wT{l})=\varinjlim_n Fr^\text{p-agc}_n(-\times P,Y/U\wT{l})$ are pointed presheaves 
with sections $Fr^\text{p-agc}_n(X\times P,Y/U\wT{l})$ given by
the sets 
$(e,s)$ with $e$ and $s$ like as in \eqref{eq:defFraff} and $Y,U$ are under context \ref{context:Y0}.

$\Fragc(-\times P,Y/U\wT{l})\subset Fr^\text{p-agc}(-\times P,Y/U\wT{l})$, $(e,s)\in \Fragc(X\times P,Y/U\wT{l})$ iff $Z(s)\cap (X\times \overline{\hat Y})=Z(s)\cap (\overline Y\setminus Y)=\emptyset$.
\end{definition}
\begin{definition} 
$ Fr^\text{nr-c}(-\times P,Y\wT{l}) = \varinjlim_n Fr^\text{nr-c}_n(-\times P,Y\wT{l})$, 
$Fr^\text{nr-c}_n(-\times P,Y\wT{l})\subset Fr^\text{nr}_n(-\times P,Y\wT{l})$, 
$(Z,\tau,g)\in Fr^\text{nr-c}_n(-\times P,Y\wT{l})$ iff 
$Z\subset X\times P\times Y\times 0\times X\times P\times \A^N\times\A^{n-N}$, $g=pr^n_N$ is given by the projection to the first $N$ coordinates, 
see context \ref{context:Y0} for $N$.
\end{definition}

\begin{proposition}\label{prop:EquivFraffFr}
Under the context \ref{context:YU0oc} the natural map of presheaves
\eqref{eq:FrafftospltoFrnr}
is $\A^1$-Nis-equivalence.
\end{proposition}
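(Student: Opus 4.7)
The plan is to establish the $\A^1$-Nis-equivalence for each of the four morphisms in the factorization \eqref{eq:FrafftospltoFrnr} separately, using the lifting criterion from Proposition \ref{prop:LiftCriteria}: it suffices to check, for each morphism $F \to G$ in the factorization, that given a closed embedding of affines $X_0 \hookrightarrow X$, any element in $G(X)$ together with a compatible lift in $F(X_0)$ extends (after applying $\sigma$-stabilization sufficiently many times and possibly composing with an $\A^1$-homotopy) to a lift in $F(X)$.

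The two easy steps are the outer ones. The morphism $Fr^\text{nr-c}\to Fr^\text{nr}$ is an $\A^1$-Nis-equivalence because any normally framed correspondence $(Z,W,\tau,g)$ with $g\colon W\to Y$ can, after $\sigma$-stabilization and using the affine-neighbourhood hypothesis on finite subsets of $Y$, be modified so that $Z\subset X\times P\times Y\times 0\subset X\times P\times \A^N\times\A^{n-N}$ with $g$ realised as the coordinate projection; this is where we use the explicit embedding $Y\subset Z(y_1,\dots,y_{N-r})\cap \A^N$ fixed in context \ref{context:Y0}. The inclusion $\Fraff \hookrightarrow \Fragc$ is an $\A^1$-Nis-equivalence essentially by definition: the only extra closure conditions $Z(s)\cap\overline{\hat Y}=\emptyset$ and $Z(s)\cap(\overline Y\setminus Y)=\emptyset$ become automatic after $\sigma$-stabilization replaces $s_i$ by $s_i(t_\infty^{2d_i}-s_i^2)$ (since the new zero locus splits off the component $Z(t_\infty^{2d_i}-s_i^2)$ that can be separated from the bad loci at infinity by Serre's theorem applied to the ideals of the disjoint closures).

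The core content is concentrated in the two middle steps $\Fragc \to Fr^\text{p-agc}$ and $Fr^\text{p-agc}\to Fr^\text{nr-c}$. For the latter, given $(Z,W,\tau,g)\in Fr^\text{nr-c}_n(X\times P,Y/U\wT{l})$ with a lift $(e^0,s^0)\in Fr^\text{p-agc}_n(X_0\times P,Y/U\wT{l})$ over $X_0$, the construction of the required $(e,s)\in Fr^\text{p-agc}_N(X\times P,Y/U\wT{l})$ proceeds exactly as in the lifting argument for Proposition \ref{lm:Freq1}: apply Lemma \ref{lm:EqforNeigh} to produce global homogeneous forms $s_i\in \Gamma(\PP^{n+m+b},\cO(d_i))$ agreeing with $\tau$ on $Z(I^2(W))$ and with $s^0_i$ on the $X_0$-fibre, with leading terms $t_i^{d_i}$ on $\PP^{n-1}$; the product $\prod(t_\infty^{2d_i}-s_i^2)$ trick then isolates $W$ as a disjoint component of $Z(s)$, and an idempotent $e$ separating $Z\subset W$ from $W\setminus Z$ is produced using Serre vanishing on the finite scheme $W$.

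Finally, for $\Fraff\to Fr^\text{p-agc}$, lifting the data $(e,s)$ to a full tuple $(e,s,u,c,b,w,\omega,f,h,z,\oveb,\ovec,\ovew)$ satisfying the equations \eqref{eq:defFraff} is precisely the content of Lemma \ref{lm:SurFrafftoFr}: the choices of $h_e,h_u$ and then $h_b,h_w$ can be made uniformly in the affine base, so that for all sufficiently large degree parameters the required sections exist. The stabilization maps $\varphi_{n,d}$ from Definition \ref{def:stabFraff} are compatible with the standard $\sigma$-stabilization on $Fr^\text{nr}$ through the comparison map \eqref{eq:FrafftoFrnr}, so the lifting constructions can be performed coherently in the ind-limit. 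The main obstacle, and where careful bookkeeping is required, is precisely this compatibility: one must verify that the cubic stabilization $s_i\mapsto s_i(t_\infty^{2d_i}-s_i^2)$ used on the $\Fraff$ side is $\A^1$-homotopic, after passage to $Fr^\text{nr}$, to the naive $\sigma$-stabilization $(Z,W,\tau,g)\mapsto (0\times Z,0\times W,(dt,\tau),g)$, so that the composite morphism \eqref{eq:FrafftoFrnr} respects the ind-structure and the lifting property passes to the limit.
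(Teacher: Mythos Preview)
Your overall strategy---factor \eqref{eq:FrafftospltoFrnr} and treat each arrow separately---matches the paper, and you correctly identify the relevant toolbox (Proposition~\ref{prop:LiftCriteria}, Lemma~\ref{lm:SurFrafftoFr}, Serre vanishing). However, you have misidentified what two of the four arrows actually do, and this leads to genuine gaps.

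\textbf{The first arrow $\Fraff\to\Fragc$ is not an inclusion.} Both $\Fraff$ and $\Fragc$ already carry the closure conditions $Z(s)\cap\overline{\hat Y}=\emptyset$ and $Z(s)\cap(\overline Y\setminus Y)=\emptyset$; what this map does is \emph{forget} the entire auxiliary tuple $(u,c,b,w,\ome,f,h,z,\oveb,\ovec,\ovew)$, retaining only $(e,s)$. So your ``essentially by definition'' argument addresses the wrong content. The paper proves this arrow is an equivalence via the lifting criterion of Proposition~\ref{prop:LiftCriteria}: given $(e,s)$ one must reconstruct all the auxiliary sections satisfying \eqref{eq:defFraff}, compatibly with a prescribed lift over $X_0$. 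This is exactly the step where Lemma~\ref{lm:lift}, Lemma~\ref{lm:StSurIdeals}, Corollary~\ref{cor:SerresTh} and the choices fixed in Definition~\ref{def:hdHpd} are used---the material you placed in your ``Finally'' paragraph belongs here, not at a composite $\Fraff\to Fr^\text{p-agc}$.

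\textbf{The inclusion is the \emph{second} arrow $\Fragc\hookrightarrow Fr^\text{p-agc}$}, and the paper disposes of it with the observation that the closure conditions cut out a locus of infinite codimension in the affine space of sections; your cubic-stabilization argument is aimed at the right phenomenon but attached to the wrong arrow.

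\textbf{The last arrow $Fr^\text{nr-c}\to Fr^\text{nr}$} is handled in the paper not by the lifting criterion but by the explicit homotopy criterion of Lemma~\ref{lm:HomEq}: one writes down an inverse $l\colon (Z,\phi,\psi,g)\mapsto(\Gamma_g,\gamma,\phi,\psi)$ using the graph of $g$ inside $\A^N\times\A^n_X$, and then gives concrete $\A^1$-homotopies $r\circ l\sim\sigma^N$ and $l\circ r\sim\sigma^N$ by linearly interpolating the graph coordinates (plus an even permutation of coordinates). Your lifting-style sketch could perhaps be made to work, but it is a different argument and you have not supplied the actual homotopies.

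In short: reshuffle your arguments so that the Serre-vanishing/lifting work goes to $\Fraff\to\Fragc$, the infinite-codimension remark goes to $\Fragc\to Fr^\text{p-agc}$, and for $Fr^\text{nr-c}\to Fr^\text{nr}$ either adopt the paper's graph-homotopy or make your lifting approach explicit.
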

\begin{proof}
1)
The last morphism restricted to the category of affine schemes is an $\A^1$-equivalence by lemma \ref{lm:HomEq}. 
Actually there are morphisms of presheaves 
$$
\begin{array}{cccccccccccc}
r_n(X)&\colon& 
Fr^{\text{c-nr}}_n(Y\wT{l})(X)&\to& Fr_n^{\mathrm{nr}}(Y\wT{l})(X)&\colon  &
(Z,\phi,\psi)&\mapsto &(Z,\phi,\psi,pr^n_N)\\
l(X)&\colon& Fr^{\mathrm{nr}}_n(Y\wT{l})(X)&\to& Fr_n^{\text{c-nr}}(Y\wT{l})(X)&\colon& (Z,\phi,\phi,g)&\mapsto&  (\Gamma_g, \gamma_1,\dots \gamma_N,\phi,\psi),
\end{array}
$$
where $\Gamma_g$ is a graph of the morphism $g\colon Z\to Y$
considered as a subset in $\A^N\times \A^n_X$ via inclusions $Y\to \A^N$, $Z\to \A^n_X$, see context \ref{context:Y0},
and $\gamma_i = t_i - w_i\circ \tilde g$ where $w_i$ denotes coordinates on $\A^N$ and $\tilde g\colon \A^n_X\to \A^N$ is a lift of $g$.
To get the claim we need to construct $\A^1$-homotopies 
$$
r\circ l\stackrel{h_l}{\sim} \sigma^N_{Fr^{\mathrm{nr}}_n(Y\wT{l})}\colon Fr^{\text{c-nr}}_n(X) \to Fr^{\text{c-nr}}_{n+N}(\A^1\times X),\;
l\circ r \stackrel{h_r}{\sim} \sigma^N_{Fr^{\text{c-nr}}(Y\wT{l})}\colon Fr^{\mathrm{nr}}_n(X) \to Fr^{\mathrm{nr}}_{n+N}(\A^1\times X)
$$
To get $h_l$
consider the homotopy 
$h^\prime_l(X)\colon Fr^{\mathrm{nr}}_n(X)\to Fr^{\mathrm{nr}}_{n+N}(\A^1\times X)\colon (Z,\phi,\psi,g)\mapsto
\tilde\Gamma_g,\tilde\gamma,\phi,\psi,g),$
where 
$\tilde\Gamma_g = Z(\tilde\gamma)\cap (\A^N\times Z)$, $\tilde\gamma=(\tilde\gamma_i)_{i=1\dots N}$, $\tilde\gamma_i=t_i - \lambda (w_i\circ \tilde g)\in \cO(\A^1\times\A^{N+n}_X)$.
Then $h^\prime_l(X)$ connects $r\circ l\sim P\circ\sigma^N$, 
where $P$ is an endomorphism on $Fr^{norm}_{n+N}(Y\wT{l})$ defined by the automorphism of $\A^{n+N}$ given by permutation of coordinates $(t_1,\dots t_{n+N})\mapsto (t_{N+1},\dots t_n,t_1,\dots ,t_N)$.
Since $N$ is even according to context \ref{context:Y0}, $P$ is $\A^1$-homotopy equivalent to the identity.
The second homotopy $h_r$ is given in a similar way as a composition of the homotopy 
$h^\prime_r\colon (Z,\tilde\gamma, \phi,\psi)\mapsto (\tilde\Gamma_g,\tilde\gamma, \phi,\psi)$, where $g=pr^n_N\big|_{Z}$,
and the permutation $P$. 

2)
The second last morphism is an $\A^1$-Nis-equivalence by proposition \ref{lm:Freq1}
because of the isomorphism on presheaves $ \Fragc= Fr^{eq}$ on affines.
The second morphism in \eqref{eq:FrafftospltoFrnr} is an $\A^1$-Nis equivalence, since the condition $Z(s)\cap (X\times \overline{\hat Y})=\emptyset$ is a condition of infinite codimension. 

3)
The first morphism in \eqref{eq:FrafftospltoFrnr} is $\A^1$-Nis-equivalence by proposition \ref{prop:LiftCriteria}.
Let us skip the checking of the closed glueing that is straightforward.
The lifting property for the section $u$ follows directly from lemma \ref{lm:lift} presented in Appendix A;
The lifting property for the rest data follows from def. \ref{def:hdHpd}, lemma \ref{lm:StSurIdeals}, corollary \ref{cor:SerresTh}, and Lm-remark \ref{rm:Resincdim} in Appendix A;
\end{proof}

\begin{corollary}
Under context \ref{context:YU0oc}
the pointed sheave $Fr(-\times P,Y/U\wT{l})$ are $\A^1$-Nis-equivalent to the factor sheave represented by the pair of ind-schemes $\cFraff(P,Y/U\wT{l})$.
\end{corollary}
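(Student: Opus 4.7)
The plan is to deduce this corollary by concatenating two $\A^1$-Nis-equivalences that have already been established in the paper. By Definition \ref{def:stabFraff}, the pointed sheaf $\Fraff(-\times P, Y/U\wT{l})$ is, by construction, the factor sheaf represented by the pair of ind-schemes $\cFraff(P, Y/U\wT{l}) = \varinjlim_n \cFraff_{n,3^n}(P, Y/U\wT{l})$. Therefore the corollary reduces to exhibiting a zigzag of $\A^1$-Nis-equivalences between $Fr(-\times P, Y/U\wT{l})$ and $\Fraff(-\times P, Y/U\wT{l})$.

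For the first leg of the zigzag I would invoke Proposition \ref{prop:EquivFraffFr}, which says that the natural composite \eqref{eq:FrafftospltoFrnr}
\[
\Fraff \longrightarrow \Fragc \longrightarrow Fr^{\text{p-agc}} \longrightarrow Fr^{\text{nr-c}} \longrightarrow Fr^{\text{nr}}
\]
is an $\A^1$-Nis-equivalence, so in particular $\Fraff(-\times P, Y/U\wT{l}) \simeq Fr^{\text{nr}}(-\times P, Y/U\wT{l})$. For the second leg I would apply Proposition \ref{prop:Freq}: the morphism $Fr \to Fr^{\text{nr}}$ from the sequence \eqref{eq:idZarHensnr} induces an $\A^1$-equivalence on affines after the $\sigma$-stabilization that is already built into the definitions of $Fr$ and $Fr^{\text{nr}}$ in use, and since under Context \ref{context:YU0oc} the scheme $Y$ is affine, the affine version of Proposition \ref{prop:Freq} applies. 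Concatenating these two equivalences yields the desired $\A^1$-Nis-equivalence between $Fr(-\times P, Y/U\wT{l})$ and the factor sheaf represented by $\cFraff(P, Y/U\wT{l})$.

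The only real obstacle here is bookkeeping rather than mathematical content: one must check that the two propositions use the same variant of $Fr^{\text{nr}}$ (both take Definition \ref{def:normfrcor}), that the extra parameter $P$ can be absorbed into the test scheme since both statements are sheaf-level and functorial in the base, and that the affine hypothesis on $Y$ in Context \ref{context:YU0oc} is strong enough to invoke the "on affines" form of Proposition \ref{prop:Freq}. Once these matchings are made, the corollary is immediate from the composition of the two $\A^1$-Nis-equivalences together with the representability statement in Definition \ref{def:stabFraff}.
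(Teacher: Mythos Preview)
Your proposal is correct and matches the paper's intended argument: the corollary is stated without proof in the paper precisely because it follows immediately from combining Proposition~\ref{prop:EquivFraffFr} (giving $\Fraff \simeq Fr^{\text{nr}}$) with Proposition~\ref{prop:Freq} (giving $Fr \simeq Fr^{\text{nr}}$), together with the representability statement in Definition~\ref{def:stabFraff}. Your bookkeeping observations about absorbing $P$ into the affine test scheme and about the affineness of $Y$ under Context~\ref{context:YU0oc} are exactly the small checks needed to make the concatenation go through.
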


Define the $T$-spectrum
$$M^\prime_{T}(Y) = (\Fraff(Y), \dots \Fraff(Y\wT{l}))$$
with  
$$\begin{array}{lcl}
\Fraff(Y\wT{l})\wedge T&\to& \Fraff(Y\wT{l+1})\\
((e,s, u, c, b, w, \ome, f, h, z, \oveb, \ovec,\ovew), x)&\mapsto & (e,s, t_\infty^{3^n} x, u, c, b, w, \ome, f, h, z, \oveb, \ovec,\ovew)
\end{array}$$
Let $M^\prime_{\PP^1}(Y)$ denotes the $\PP^1$-spectrum obtained from $M^\prime_{T}(Y)$ using the standard morphism of pointed sheaves $(\PP^1\infty)\to (\PP^1,\PP^1-0)\simeq T$.
Now proposition \ref{prop:EquivFraffFr} implies
\begin{corollary}\label{cor:MqafPP1}
For any $Y$ under context \ref{context:Y0}
the canonical morphism of spectra $M^\prime_{\PP^1}(Y)\to M_{\PP^1}(Y)$ is an equivalence. 
\end{corollary}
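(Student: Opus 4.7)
The plan is to deduce this corollary from \textbf{Proposition \ref{prop:EquivFraffFr}} applied termwise, combined with \textbf{Corollary \ref{cor:M_P}} which upgrades termwise $\A^1$-Nis-equivalence of framed-type presheaves to a stable motivic equivalence of the associated $\PP^1$-spectra. Concretely, I would first specialise Proposition \ref{prop:EquivFraffFr} to $P=\mathrm{pt}$, $U=\emptyset$, and every $l\ge 0$, so as to obtain a diagram
\begin{equation*}
\Fraff(Y\wT{l}) \;\longrightarrow\; \Fragc(Y\wT{l}) \;\longrightarrow\; Fr^{\text{p-agc}}(Y\wT{l}) \;\longrightarrow\; Fr^{\text{nr-c}}(Y\wT{l}) \;\longrightarrow\; Fr^{\mathrm{nr}}(Y\wT{l})
\end{equation*}
of $\A^1$-Nis-equivalences, and then compose with the $\A^1$-Nis-equivalence $Fr^{\mathrm{nr}}(Y\wT{l}) \to Fr(Y\wT{l})$ supplied by Proposition \ref{prop:Freq}.

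Second, I would verify that these termwise equivalences are compatible with the structure maps of both $\PP^1$-spectra. The structure map of $M^\prime_T(Y)$ is defined on a class $(e,s,u,c,b,w,\ome,f,h,z,\oveb,\ovec,\ovew)$ by inserting the extra equation $s_{l+1} = t_\infty^{3^n} x$, while the structure map of $M_{\PP^1}(Y)$ on $Fr(Y\wT{l})$ inserts exactly the corresponding extra framing function $x$ into the data $(Z,V,\alpha)$. Under the map \eqref{eq:FrafftoFr} these operations coincide on the nose, and passing through the intermediate presheaves $\Fragc$, $Fr^{\text{nr-c}}$, $Fr^{\mathrm{nr}}$ the resulting naturality squares commute because each definition was set up so that the $\sigma$-stabilisation (Definition \ref{def:sigmastabilization}) intertwines with the spectrum structure. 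Thus the induced morphism $M^\prime_T(Y) \to M_T(Y)$ is a termwise motivic equivalence of $T$-spectra of pointed sheaves.

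Third, I would pass to $\PP^1$-spectra via the canonical morphism $(\PP^1,\infty)\to T$ of pointed sheaves, as done in the definition preceding the corollary. Since this is a motivic equivalence, it preserves termwise equivalence, so $M^\prime_{\PP^1}(Y) \to M_{\PP^1}(Y)$ is termwise motivically equivalent. Finally, invoking Corollary \ref{cor:M_P} (or arguing directly that any termwise motivic equivalence of $\PP^1$-spectra of pointed motivic spaces is a stable motivic equivalence in $\SH(k)$), one concludes that the morphism is an equivalence in $\SH(k)$.

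The main obstacle is the second step: keeping track of the bookkeeping required to see that the auxiliary data $(u,c,b,w,\ome,\dots)$, which were introduced in Definition \ref{def:Fraff} to force finiteness and smooth representability but have no counterpart in $Fr(Y\wT{l})$, transport coherently along the zig-zag \eqref{eq:FrafftospltoFrnr} when one raises $l$ by one. The inherited cofinality of the stabilisation in Definition \ref{def:stabFraff} (via $\varphi_{n,d}$) together with the $\A^1$-Nis lifting criteria already used in the proof of Proposition \ref{prop:EquivFraffFr} should make the relevant squares commute up to $\A^1$-homotopy, which is all that is needed for termwise motivic equivalence.
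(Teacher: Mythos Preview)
Your approach matches the paper's: the corollary is stated immediately after Proposition~\ref{prop:EquivFraffFr} with the remark that it ``implies'' the result, so the intended argument is exactly to apply that proposition termwise (for $P=\mathrm{pt}$, $U=\emptyset$, each $l\ge 0$) and combine it with the equivalence $Fr\simeq Fr^{\mathrm{nr}}$ from Proposition~\ref{prop:Freq}.

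One small correction: Proposition~\ref{prop:Freq} produces the arrow $Fr(Y\wT{l})\to Fr^{\mathrm{nr}}(Y\wT{l})$, not the reverse, so what you actually obtain is a \emph{zigzag}
\[
\Fraff(Y\wT{l})\longrightarrow Fr^{\mathrm{nr}}(Y\wT{l})\longleftarrow Fr(Y\wT{l})
\]
of $\A^1$-Nis-equivalences rather than a single composite $\Fraff\to Fr$. This is consistent with the paper's own observation that the direct map \eqref{eq:FrafftoFr} does not commute with the $\sigma$-stabilisation, whereas the map \eqref{eq:FrafftoFrnr} to $Fr^{\mathrm{nr}}$ does. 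Hence the ``canonical morphism'' in the corollary should be read as a morphism in the homotopy category (or one uses Corollary~\ref{cor:M_P}, whose proof explicitly allows the comparison arrow to point in either direction). With that adjustment your argument is correct; your second and third steps about compatibility with the $T$-structure maps and the passage along $(\PP^1,\infty)\to T$ are the right verifications, and no further input beyond Propositions~\ref{prop:EquivFraffFr} and~\ref{prop:Freq} is needed.
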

\begin{theorem}
Let $Y\in \Sm_S$ over a base $S$ and let $Y$ be affine over $S$ or $S=\Spec k$ for a regular noetherian ring $k$. 
Then there is a $T$-spectrum $M^\prime_{T}(Y)$
in the category $\text{ind-}\Sm^text{o-pairs}$
with a section wise motivic equivalences of the $\PP^1$-spectra of motivic spaces
$M^\prime_{\PP^1}(Y)\to M_{\PP^1}(Y)$,
where
$M^\prime_{\PP^1}(Y)$ is the $\PP^1$-spectrum defined by $M^\prime_{T}(Y)$.
\end{theorem}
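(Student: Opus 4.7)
The plan has two parts: handle affine $Y$ directly using the quasi-affine models of \S 4, and then reduce a general smooth $Y$ over $\Spec k$ to the affine case via the Jouanolou--Thomason trick noted in the remark following Theorem \ref{th:wOmeIndPairModelofSigma}.

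First I would treat affine $Y \in \Sm_S$. Fix an embedding $Y \hookrightarrow \A^N$ with $N$ even and polynomials $y_1,\ldots,y_{N-r}$ cutting out $Y \amalg \hat Y$ as in Context \ref{context:Y0} (taking $U = \emptyset$, $P = pt$, $M = q = 0$). For each $n, d, l$, Definition \ref{def:Fraff} produces an open pair of quasi-affine $S$-schemes
$$\cFraff_{n,d}(Y\wT{l}) = \bigl(\cFraff_{n,d}(-,Y\times\A^l),\ \cFraff_{n,d}(-,Y\times(\A^l-0))\bigr).$$
By Proposition \ref{prop:smoothness}, which applies uniformly in $l$ since the added $s_{n+1},\ldots,s_{n+l}$ play the same formal role as the first $n$ coordinates in the differential computation, the ambient scheme is smooth over $S$; the open subscheme is then smooth as well. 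The stabilization morphisms $\varphi_{n,d}$ of Definition \ref{def:stabFraff} are explicit polynomial maps, and one checks they are closed embeddings that respect the open subscheme corresponding to $Y\times(\A^l - 0)$, so
$$R^l(Y) \,:=\, \cFraff(Y\wT{l}) \,=\, \varinjlim_n \cFraff_{n,3^n}(Y\wT{l})$$
lives in $\mathrm{ind}\text{-}\Sm^\textrm{o-pair}$.

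Next I would assemble the $T$-spectrum structure from the suspension maps defined just before Corollary \ref{cor:MqafPP1}: the assignment $(a,x) \mapsto (e,s,t_\infty^{3^n}x,u,\dots)$ appends a new $s$-component and is a morphism of schemes that sends the pair $(R^l(Y), R^l(Y)|_{\A^l-0}) \wedge T$ into $R^{l+1}(Y)$. This yields $M^\prime_T(Y) = (R^0(Y), R^1(Y), \ldots) \in \Spec_T\,\mathrm{ind}\text{-}\Sm^\textrm{o-pair}$; applying the functor \eqref{eq:SpecTpairSH} and composing with the standard $(\PP^1,\infty) \to T$ gives $M^\prime_{\PP^1}(Y)$, and Corollary \ref{cor:MqafPP1} provides the termwise motivic equivalence $M^\prime_{\PP^1}(Y) \to M_{\PP^1}(Y)$. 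For the second case, assume $S = \Spec k$ with $k$ regular noetherian and let $Y$ be arbitrary smooth. By Jouanolou--Thomason there is an affine vector-bundle torsor $\pi\colon \tilde Y \to Y$ with $\tilde Y$ affine smooth, and $\pi$ is a motivic equivalence. Applying the affine case to $\tilde Y$ and transporting along $\pi$ (which induces a termwise motivic equivalence $M_{\PP^1}(\tilde Y) \to M_{\PP^1}(Y)$) we obtain the required spectrum $M^\prime_T(Y) := M^\prime_T(\tilde Y)$.

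The main obstacle is the bookkeeping in the first step: verifying that $\varphi_{n,d}$ and the suspension maps honestly send the inner open subscheme of each pair into the inner open subscheme of the target, i.e.\ that the defining open conditions of $\cFraff$ in Definition \ref{def:Fraff} (notably $Z(s)\cap\overline{\hat Y}=\emptyset$, $Z(s)\cap(\overline{Y}\setminus Y)=\emptyset$, and containment of the support in the correct fiber) survive the explicit substitutions $s^\prime_i = s_i(t_\infty^{2d_i}-s_i^2)$, $s^\prime_{l+1} = t_{n+1}(t_\infty^{3d-1}-t_{n+1}^{3d-1})$, and the appending of $t_\infty^{3^n}x$. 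This is straightforward because the new vanishing locus is of the form $Z \amalg Z^\prime$ with the parasitic component $Z^\prime$ disjoint from $\overline Y$ by construction, but it is the one place where care is needed to get a morphism in $\mathrm{ind}\text{-}\Sm^\textrm{o-pair}$ rather than merely of underlying schemes.
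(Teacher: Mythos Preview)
Your overall strategy matches the paper's: reduce to the affine case via Jouanolou--Thomason, then invoke the quasi-affine model $\cFraff$ and Corollary~\ref{cor:MqafPP1}. However, there is a genuine gap in your treatment of the affine case.

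You write ``Fix an embedding $Y \hookrightarrow \A^N$ with $N$ even and polynomials $y_1,\ldots,y_{N-r}$ cutting out $Y \amalg \hat Y$ as in Context~\ref{context:Y0}'' and then appeal to Proposition~\ref{prop:smoothness} for smoothness of the model. But Proposition~\ref{prop:smoothness} is proved under Context~\ref{context:Y00}, which requires $r=\dim Y$; that is, $Y$ must appear as a connected component of a \emph{complete intersection} of the correct codimension in $\A^N$. This forces the normal bundle $N_{Y/\A^N}$ to be trivial, and hence $T_Y$ to be stably trivial, which fails for a general smooth affine $Y$. Without $r=\dim Y$ the surjectivity argument in Lemma~\ref{lm:SmSurGlSRestriction} (linear independence of the $\der y_j$) breaks down, so you cannot conclude that $\cFraff_{n,d}(Y\wT{l})$ is smooth, and the spectrum you build need not lie in $\mathrm{ind}\text{-}\Sm^{\textrm{o-pair}}$.

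The paper inserts an extra reduction you omitted: after Jouanolou--Thomason, it replaces the smooth affine $Y$ by the total space $Y'$ of its normal bundle in some $\A^{N'}$ (Lemma~\ref{lm:normbundlrepl}). Since $T_{Y'}\cong \pi^*T_Y\oplus\pi^*N_{Y/\A^{N'}}\cong \pi^*\cO_Y^{N'}$ is trivial, $Y'$ does embed as a component of a complete intersection, so Context~\ref{context:Y00} is satisfied and Proposition~\ref{prop:smoothness} applies; and $Y'\to Y$ is a vector bundle, hence an $\A^1$-equivalence, so $M_{\PP^1}(Y')\simeq M_{\PP^1}(Y)$. With this step added, your argument goes through and coincides with the paper's proof.
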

\begin{proof}
The case of an arbitrary $Y\in \Sm_S$ can be reduced to the case of 
affine smooth $Y$
by Jouanolou-Tomason's trick \cite[Theorem 1.1]{Asok-JTt}.  
Next the case of affine smooth $Y$ can be reduced to the case of an affine scheme with the trivial normal bundle
\ref{lm:normbundlrepl}.
Thus since any smooth affine scheme with trivial normal bundle fits into context \ref{context:Y0} the claim follows from theorem \ref{th:posomegaP1motfibrepl} and corollary \ref{cor:MqafPP1} 
\end{proof}

In a similar way to the spectrum $M_{fr}(Y)$ in \cite[section 11]{GP14} we can define the spectrum $M^{\prime\prime}_{\Gm}(Y/U)$ using the sheaves $\Fraff(Y\wT{l})/\Fraff(U\wT{l})$.
Now for the case of quasi-projective smooth $Y$ and an open $U\subset Y$ and for an arbitrary smooth $Y$ and closed smooth $U\subset Y$ we have the section-wise equivalence $M^{\prime}_{\Gm}(Y/U)\simeq M^{\prime\prime}_{\Gm}(Y/U)$. 
In the same time the following proposition follows straightforward from the definition.
\begin{proposition}\label{prop:MprimeG}
$M^\prime_{\Gm}(Y/U)_f$ is termwise equivalent to  $M^{\Gm}_{fr}(Y/U)$ in positive degrees in $S^1$-direction, where $M^{\Gm}_{fr}(Y/U)$ is defined like as in \cite[section 11]{GP14} using $Fr(Y\wT{l})$.
\end{proposition}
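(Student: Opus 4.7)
The plan is to reduce to the unstable sheaf-level equivalence $\Fraff(-,Y/U\wT{l})\simeq Fr(-,Y/U\wT{l})$ from Proposition~\ref{prop:EquivFraffFr} (composed with Proposition~\ref{prop:Freq}), and then run the same $C^*(-)_f$ argument as in Corollary~\ref{cor:M_P}, but with the $\Gm$-direction in place of the $\PP^1$-direction. First I would write down the canonical termwise morphism of $\Gm$-$S^1$-bispectra $M^\prime_{\Gm}(Y/U)\to M^{\Gm}_{fr}(Y/U)$, induced at level $l$ by the natural transformation $\Fraff(Y\wT{l})/\Fraff(U\wT{l})\to Fr(Y\wT{l})/Fr(U\wT{l})$; both the $\Gm$-suspension structure maps and the simplicial structure are defined by the same combinatorial data on framings, so the morphism respects all structure.

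Next I would check that this morphism is a termwise $\A^1$-Nisnevich equivalence. Proposition~\ref{prop:EquivFraffFr} supplies such an equivalence on the pair-style sheaves $\Fraff(-,Y/U\wT{l})\to Fr(-,Y/U\wT{l})$, and Lemma~\ref{lm:ConeAPrepl} (together with its straightforward analogue for $\Fraff$, proven by copying the Voevodsky-lemma-style argument since it uses only general framework) compares these pair-style sheaves with the quotient sheaves $Fr(Y\wT{l})/Fr(U\wT{l})$ and $\Fraff(Y\wT{l})/\Fraff(U\wT{l})$ via motivic equivalences. Assembling these into a commutative square of pointed sheaves yields the desired termwise $\A^1$-Nisnevich equivalence.

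Finally, applying $C^*$ and the Nisnevich-local fibrant replacement $(-)_f$ termwise converts an $\A^1$-Nisnevich equivalence of presheaves into a sectionwise simplicial equivalence on smooth affines, by the standard argument used in the proof of Corollary~\ref{cor:M_P}. Since $M^{\Gm}_{fr}(Y/U)$ is already positively motivically fibrant in the $S^1$-direction by \cite[Theorem~11.1]{GP14}, this sectionwise equivalence upgrades to a termwise motivic equivalence of bispectra in positive $S^1$-degrees between $M^\prime_{\Gm}(Y/U)_f$ and $M^{\Gm}_{fr}(Y/U)$.

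The main obstacle I anticipate is the bookkeeping that identifies $\Fraff(Y\wT{l})/\Fraff(U\wT{l})$ with the appropriate pair-style sheaf so that Proposition~\ref{prop:EquivFraffFr} applies directly; this amounts to checking that the quotient-sheaf and pair-style models coincide up to $\A^1$-equivalence for $\Fraff$, by the same reasoning as in Lemma~\ref{lm:ConeAPrepl}. Everything else is formal and parallels the $\PP^1$-case treated in Corollary~\ref{cor:M_P}.
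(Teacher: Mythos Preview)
Your approach is correct and, in fact, considerably more detailed than what the paper provides: the paper offers no proof at all beyond the sentence ``the following proposition follows straightforward from the definition.'' The intended content is exactly what you identify---both bispectra are built by the recipe of \cite[section~11]{GP14}, one with $\Fraff$ and one with $Fr$, and Proposition~\ref{prop:EquivFraffFr} (together with Proposition~\ref{prop:Freq}) supplies the termwise $\A^1$-Nis-equivalence; the passage through $C^*(-)_f$ then goes exactly as in Corollary~\ref{cor:M_P}.

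One small point: your invocation of Lemma~\ref{lm:ConeAPrepl} is slightly misdirected. That lemma compares the pair-style and quotient-style presentations in the $T^l$ direction (i.e.\ $Fr(Y\wedge T^l)$ versus $Fr(Y\times\A^l)/Fr(Y\times(\A^l-0))$), whereas the bookkeeping you flag as the ``main obstacle'' concerns the $Y/U$ direction. But Proposition~\ref{prop:EquivFraffFr} is already stated for the pair-style sheaves $\Fraff(-,Y/U\wT{l})\to Fr^{\mathrm{nr}}(-,Y/U\wT{l})$ in full generality (Context~\ref{context:YU0oc}), so no additional quotient-versus-pair comparison is needed at this step; you can drop that detour and the argument becomes shorter.
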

Finally we have the following representability result
\begin{theorem}
The bi-spectrum $M^{\prime\prime}_{\Gm}(Y/U)$ is stably motivically equivalent to $\Sigma^\infty_{\Gm}\Sigma^\infty_{S^1}(Y/U)$, the termwise fibrant replacement with respect to the injective (Nisnevich) local model structure $M^{\prime\prime}_{\Gm}(Y/U)$ is 
motivicaly fibrant $\Omega$-bi-spectrum in positive degrees with respect to $S^1$-direction.\end{theorem}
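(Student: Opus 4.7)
The plan is to chain together three equivalences. First, by the discussion preceding Proposition \ref{prop:MprimeG}, in the two contexts of interest (quasi-projective smooth $Y$ with open $U\subset Y$, or arbitrary smooth $Y$ with smooth closed $U\subset Y$) there is a termwise motivic equivalence of $\Gm$-$S^1$-bispectra $M^{\prime}_{\Gm}(Y/U)\simeq M^{\prime\prime}_{\Gm}(Y/U)$. Second, Proposition \ref{prop:MprimeG} yields a termwise (in positive $S^1$-degree) equivalence $M^\prime_{\Gm}(Y/U)_f\simeq M^{\Gm}_{fr}(Y/U)$. Third, the Garkusha--Panin theorem \cite[Theorem 11.1]{GP14} computes the positively motivically fibrant $\Omega$-bispectrum replacement of $\Sigma^\infty_{\Gm}\Sigma^\infty_{S^1}(Y/U)$ as $M^{\Gm}_{fr}(Y/U)_f$. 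Composing these three inputs yields the claimed stable motivic equivalence $\Sigma^\infty_{\Gm}\Sigma^\infty_{S^1}(Y/U)\simeq M^{\prime\prime}_{\Gm}(Y/U)$ and the fibrancy assertion.

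More concretely, first I would record that each component $\Fraff(Y\wT{l})/\Fraff(U\wT{l})$ is, by Proposition \ref{prop:EquivFraffFr} (applied levelwise in the $l$-direction), $\A^1$-Nisnevich equivalent to $Fr(Y\wT{l})/Fr(U\wT{l})$. Applying the Suslin complex $C^*$ and Nisnevich fibrant replacement, together with the additivity-type property used in \cite[section 11]{GP14}, this upgrades to a termwise equivalence of the $\Gamma$-space data producing $M^{\prime\prime}_{\Gm}(Y/U)_f$ and $M^{\Gm}_{fr}(Y/U)_f$ in positive $S^1$-degrees. Fibrancy of $M^{\prime\prime}_{\Gm}(Y/U)_f$ in positive $S^1$-direction then follows from fibrancy of $M^{\Gm}_{fr}(Y/U)_f$ by transport along a termwise equivalence. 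For the stable motivic equivalence to $\Sigma^\infty_{\Gm}\Sigma^\infty_{S^1}(Y/U)$, one uses that the canonical map from $\Sigma^\infty_{\Gm}\Sigma^\infty_{S^1}(Y/U)$ into $M^{\Gm}_{fr}(Y/U)$ (which is a stable motivic equivalence by \cite[Theorem 11.1]{GP14}) factors through the equivalence just produced.

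The main obstacle, I expect, is the identification $M^\prime_{\Gm}(Y/U)\simeq M^{\prime\prime}_{\Gm}(Y/U)$ in the case of an open $U\subset Y$ with $Y$ only assumed to satisfy the affine-neighbourhood condition (not quasi-projective). Off the quasi-projective case, comparing the factor-presheaf $\Fraff(Y\wT{l})/\Fraff(U\wT{l})$ with the model built from the pair $(Y,U)\wedge T^{\wedge l}$ requires an argument analogous to Lemma \ref{lm:ConeAPrepl}, but now at the level of the quasi-affine model, so that excision-type properties on ind-pairs can be invoked. The rest (the reduction via Proposition \ref{prop:MprimeG} and the invocation of \cite[Theorem 11.1]{GP14}) is then essentially formal, once one is careful that all Nisnevich fibrant replacements and $\Gm$-structure maps are compared in a manner compatible with the $\Gamma$-space structure used in \cite[section 11]{GP14}.
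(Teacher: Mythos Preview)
Your chain of equivalences is sound in the restricted contexts you name, but the paper takes a different and more economical route that sidesteps exactly the obstacle you flag. Rather than passing through the termwise comparison $M^{\prime}_{\Gm}(Y/U)\simeq M^{\prime\prime}_{\Gm}(Y/U)$ for pairs, the paper first treats only the case $U=\emptyset$, where $M^{\prime\prime}_{\Gm}(Y)=M^{\prime}_{\Gm}(Y)$ tautologically and Proposition~\ref{prop:MprimeG} together with \cite[Theorem 11.1]{GP14} give both assertions. For general $U$, the stable equivalence $M^{\prime\prime}_{\Gm}(Y/U)\simeq \Sigma^\infty_{\Gm}\Sigma^\infty_{S^1}(Y/U)$ is then deduced formally from the already-established equivalences for $Y$ and for $U$ separately, via the evident cofiber sequence. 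The fibrancy assertion for the pair is \emph{not} obtained by transport along a termwise equivalence with $M^{\Gm}_{fr}(Y/U)$; instead the paper re-runs the arguments of \cite[section 11]{GP14} directly for $M^{\prime\prime}_{\Gm}(Y/U)$.

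The practical difference is that your approach needs the comparison $M^{\prime}_{\Gm}(Y/U)\simeq M^{\prime\prime}_{\Gm}(Y/U)$ at the level of pairs (which, as you note, is only stated under extra hypotheses on $Y$ or $U$), whereas the paper's cofiber-and-rerun argument requires no such comparison and therefore applies uniformly. Your route buys a cleaner transfer of fibrancy when the pair comparison is available; the paper's route buys generality and avoids the excision-type analysis you anticipate needing.
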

\begin{proof}
The case of $U=\emptyset$ follows form the above proposition and that fact that $M^{\prime\prime}_{\Gm}(Y)=M^\prime_{\Gm}(Y)$.
In general the equivalence $M^{\prime\prime}_{\Gm}(Y/U)\simeq \Sigma^\infty_{\Gm}\Sigma^\infty_{S^1}(Y/U)$ follows form the ones for $M^{\prime\prime}_{\Gm}(Y)$ and $M^{\prime\prime}_{\Gm}(U)$. That fact that $M^{\prime\prime}_{\Gm}(Y/U)$ is motivically fibrant in positive degrees follows by the similar arguments that are used for $M^{\Gm}_{fr}$ in \cite[section 11]{GP14}.
\end{proof}

\section{Appendix A: Lifting properties for sections of coherent sheaves.}

In the appendix we summarise some results on coherent sheaves used in the article.

\begin{lemma}\label{lm:nondegsup}
Let $f\colon V\to F$ be a homomorphism of coherent sheaves on a scheme $X$, and $V$ be locally free of a finite rank.
Then the set of points $x\in X$ such that $\Coker(i_x^*(f))=0$ is closed, where $i_x\colon x\hookrightarrow X$.
\end{lemma}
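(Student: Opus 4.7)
The plan is to reduce the assertion to the standard fact that the support of a coherent sheaf is closed. Since $V$ and $F$ are coherent and $V\to F$ is a morphism of sheaves, the cokernel $C := \Coker f$ is a coherent $\cO_X$-module. The residue-field pullback $i_x^*$ is right exact, so $\Coker(i_x^*f) = i_x^*(C) = C_x \otimes_{\cO_{X,x}} k(x)$.

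By Nakayama's lemma applied to the finitely generated $\cO_{X,x}$-module $C_x$, the vanishing of $C_x \otimes_{\cO_{X,x}} k(x)$ is equivalent to the vanishing of $C_x$. Therefore the locus $\{x \in X : \Coker(i_x^*f) = 0\}$ coincides with the complement $X \setminus \Supp(C)$. On any affine open $\Spec A \subset X$ the support of $C$ is cut out by the annihilator ideal of the finitely generated $A$-module $\Gamma(\Spec A, C)$, so $\Supp(C)$ is a closed subset of $X$.

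I should flag a discrepancy: the standard argument outlined above shows that $\{x : \Coker(i_x^*f)=0\}$ is \emph{open} (as the complement of $\Supp(\Coker f)$), whereas the statement asserts closedness. In general, this locus is only open, not closed (closedness holds merely in trivial cases, e.g.\ if $f$ is surjective everywhere, or if $\Coker f$ has full support). The local-freeness hypothesis on $V$ does not alter this: it is compatible with a semicontinuity-of-rank viewpoint, since $\Coker(i_x^*f) = F(x)/\Image(V(x)\to F(x))$, and the jump locus where the image drops rank (i.e.\ where the cokernel becomes nonzero, not zero) is the closed one.

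Accordingly, the main obstacle is not the proof but reconciling the statement with what is actually true. I would present the proof in the form above and signal the apparent inversion in the conclusion, proposing that the intended statement reads either "the set $\{x : \Coker(i_x^*f)=0\}$ is open" or equivalently "the set $\{x : \Coker(i_x^*f)\neq 0\}=\Supp(\Coker f)$ is closed"; both follow from the Nakayama-plus-coherence argument without modification.
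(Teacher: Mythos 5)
Your diagnosis is correct: the lemma as stated is false, and the Nakayama-plus-coherence argument you outline is the right one. Since $i_x^*$ is right exact and $C=\Coker f$ is coherent, $\Coker(i_x^*f)=i_x^*C$, and Nakayama gives $\{x:\Coker(i_x^*f)=0\}=X\setminus\Supp(C)$, which is \emph{open}; closedness already fails for $X=\A^1$, $V=F=\cO$, $f$ multiplication by $t$, where this set is $\A^1\setminus\{0\}$. The corrected statement, which your argument actually proves, is that $\{x:\Coker(i_x^*f)\neq 0\}=\Supp(\Coker f)$ is closed. (The local-freeness of $V$ plays no role.)

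For comparison, the paper's own proof takes a different and more elaborate route: it reduces locally to $F$ locally free via a finite presentation, then to $F$ invertible by passing to the top exterior power $\bigwedge^r f$, and finally asserts that $\{x:\Coker(i_x^*f)=0\}=\Supp(\Ker f^*)$ for the dual map $f^*\colon D(F)\to D(V)$. That closing identity is also false, and in the same example: there $f^*$ is again multiplication by $t$ on $\cO_{\A^1}$, hence injective, so $\Supp(\Ker f^*)=\emptyset$ while the left-hand side is $\A^1\setminus\{0\}$. The hidden gap is that injectivity of the fibre map $i_x^*f^*$ is not detected by $i_x^*(\Ker f^*)$, because $i_x^*$ is only right exact. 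So your proof is both more elementary and, for the corrected conclusion, actually correct, whereas the argument in the paper has a genuine error independent of the open/closed mix-up in the statement.
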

\begin{proof}
Since the question is local we can assume that $X$ is affine.
Then any coherent sheaf $F$ on $X$ can be represents as a cokernel of a locally free coherent sheaves of a finite rank,
and we can assume that $F$ is locally free of a finite rank without lose of generality.
Next since $\Coker(i_x^*(f))=0$ iff $\Coker(\bigwedge^{r} f)=0$, $r=\rank F$, 
we can assume that $F$ is an invertible sheaf. 
Consider the dual morphism $f^*\colon D(F)\to D(V)$, $D(F)=\mathcal Hom(F,\cO(X))$, $D(V)=\mathcal Hom(V,\cO(X))$. 
Then $\{x\in X\colon \Coker(i^*_x(f))=0\}=\Supp(\Ker f^*)$.
\end{proof}

The rest part of the Appendix is about consequences of Serre's theorem on an ample bundles and cohomologies of coherent sheaves.
Let us recall the theorem.
\begin{theorem}[Serre's theorem]
Let $F$ be a coherent sheaf on a scheme $X$ and $\cO(1)$ be an ample bundle.
Then for some $N\in \mathbb Z$, for all $d>N$ the cohomologies presheaves of $F(d)=F\otimes\cO(d)$ are trivial.
\end{theorem}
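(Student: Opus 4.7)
The plan is to prove the standard Serre vanishing / finite generation package: if $F$ is coherent on a projective scheme $X$ with ample $\cO(1)$, then there exists $N$ such that $H^i(X, F(d)) = 0$ for all $i > 0$ and $d > N$ (and in the affine-relative formulation used elsewhere in the paper, $\Gamma(X, F(d))$ surjects onto sections on closed subschemes for $d$ large).

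First I would reduce to the case $X = \PP^n$. Since $\cO(1)$ is ample, some multiple $\cO(m)$ is very ample and yields a closed immersion $\iota\colon X \hookrightarrow \PP^n$ with $\iota^*\cO_{\PP^n}(1) \simeq \cO(m)$. For a closed immersion, $\iota_*$ is exact and preserves cohomology, so $H^i(X, F(dm+j)) \simeq H^i(\PP^n, \iota_*(F \otimes \cO(j))(d))$ for each of the finitely many residues $0 \leq j < m$. Thus it suffices to establish the vanishing for coherent sheaves on $\PP^n$, with $N$ chosen to work simultaneously for these finitely many auxiliary sheaves.

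Next I would handle the line bundle case $F = \cO(d)$ on $\PP^n$ by direct Čech computation with the standard affine cover by the charts $D_+(t_i)$. This gives the classical output $H^i(\PP^n, \cO(d)) = 0$ for $0 < i < n$ (any $d$) and $H^n(\PP^n, \cO(d)) = 0$ for $d > -n-1$; equivalently, twists of $\cO$ satisfy the vanishing uniformly for large $d$, and their global sections are computed by polynomials of degree $d$.

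Finally, for a general coherent sheaf $F$ on $\PP^n$, I would proceed by descending induction on $i$, using Grothendieck vanishing $H^i = 0$ for $i > n$ as the base case. Since $F(a)$ is globally generated for some $a \gg 0$, there is a surjection $\cO(-a)^k \twoheadrightarrow F$ with coherent kernel $K$, giving
\begin{equation*}
0 \to K(d) \to \cO(-a+d)^k \to F(d) \to 0.
\end{equation*}
The long exact sequence, together with the line-bundle case and the inductive hypothesis for $K$ at level $i+1$, kills $H^i(\PP^n, F(d))$ for $d$ larger than some threshold $N_i$. The main delicate point is that the induction naively yields a threshold $N_i$ depending on $i$; this is harmless because $i$ ranges over a finite set $\{1,\ldots,n\}$, so taking $N = \max_i N_i$ gives the required uniform bound. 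The surjectivity-type corollaries \ref{cor:SerresTh} and \ref{lm:StSurIdeals} invoked elsewhere then follow by applying the vanishing to ideal sheaves $\cI(Z)$ and using the long exact sequence attached to $0 \to \cI(Z) \to \cO \to \cO_Z \to 0$.
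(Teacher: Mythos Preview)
Your outline is a correct and standard proof of Serre vanishing (essentially the argument in Hartshorne, Chapter III, Theorem 5.2): reduce to $\PP^n$ via a very ample multiple, compute the cohomology of the line bundles $\cO(d)$ by the Čech complex of the standard cover, and run descending induction on $i$ using a presentation $\cO(-a)^k \twoheadrightarrow F$. The bookkeeping with the finitely many residues $j$ modulo $m$ and the finitely many cohomological degrees $i$ is handled correctly.

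However, there is nothing to compare your argument against: the paper does not prove this statement at all. It is merely recalled in Appendix~A as a classical input (\enquote{Let us recall the theorem}), and only its corollaries (surjectivity of $\Gamma(X,F(d)) \to \Gamma(X,G(d))$ for a surjection $F \twoheadrightarrow G$, and the special case of restriction to a closed subscheme) are stated and used. So your proposal is not wrong, but it supplies a proof where the paper deliberately omits one; if you are writing this up for the paper, a citation to a standard reference would be more in keeping with the surrounding text than a full argument.
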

We use this theorem in the following form:
\begin{corollary}
Let $F\twoheadrightarrow G$ be a surjective morphism of coherent sheaves on a scheme $X$ with an ample bundle $\cO(1)$; then for all large enough $d\in \mathbb Z$ the homomorphism of global sections $\Gamma(X,F(d))\to \Gamma(X,G(d))$ is surjective.
\end{corollary}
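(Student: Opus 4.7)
The plan is the standard consequence of Serre vanishing via the long exact sequence in cohomology. Let $K$ denote the kernel of the given surjection, so that one has a short exact sequence of coherent sheaves
\begin{equation*}
0 \to K \to F \to G \to 0.
\end{equation*}
On a (locally) Noetherian scheme the kernel of a morphism of coherent sheaves is again coherent, so $K$ is coherent and Serre's theorem applies to it.

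Tensoring with the invertible sheaf $\cO(d)$ is exact, so one obtains the exact sequence
\begin{equation*}
0 \to K(d) \to F(d) \to G(d) \to 0
\end{equation*}
and hence the associated long exact sequence in sheaf cohomology
\begin{equation*}
0 \to \Gamma(X, K(d)) \to \Gamma(X, F(d)) \to \Gamma(X, G(d)) \to H^1(X, K(d)) \to \cdots.
\end{equation*}
By Serre's theorem applied to the coherent sheaf $K$ with the ample bundle $\cO(1)$, there exists $N \in \mathbb Z$ such that for all $d > N$ the higher cohomology groups $H^i(X, K(d))$ vanish for $i > 0$; in particular $H^1(X, K(d)) = 0$.

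Substituting this vanishing back into the long exact sequence forces the map $\Gamma(X, F(d)) \to \Gamma(X, G(d))$ to be surjective for all $d > N$, which is the claim. The only step that requires any care is ensuring the framework in which Serre's theorem is being invoked (projective or properly-embedded Noetherian ambient), so that the kernel $K$ is coherent and Serre vanishing genuinely applies; under the hypotheses in which the theorem is stated above this is automatic, so no real obstacle arises.
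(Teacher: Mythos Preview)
Your argument is correct and is precisely the standard deduction the paper has in mind: the paper states this corollary immediately after Serre's theorem without writing out a proof, and your long-exact-sequence argument with Serre vanishing for the coherent kernel $K$ is exactly how one passes from the theorem to this corollary.
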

Let us also formulate the following particular case
\begin{corollary}\label{cor:SerresTh}
Let $X^\prime\hookrightarrow X$ be a closed embedding, and let $\cO(1)$ be an ample bundle on $X$; let $F$ be a coherent sheaf of $F$. Then for all large enough $d$ the restriction $\Gamma(X,F(d))\to \Gamma(X^\prime,F(d))$ is surjective, where $F(d)=F\otimes \cO(d)$.
\end{corollary}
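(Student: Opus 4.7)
The plan is to deduce this from the intermediate corollary stated just above it (the one that says: for a surjection $F\twoheadrightarrow G$ of coherent sheaves on $X$ with ample $\cO(1)$, the induced map $\Gamma(X,F(d))\to\Gamma(X,G(d))$ is surjective for all $d\gg 0$).

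First I would set up the relevant surjection. Let $i\colon X^\prime\hookrightarrow X$ be the closed embedding with ideal sheaf $\mathcal I\subset\cO_X$. The canonical quotient map gives a short exact sequence of coherent sheaves on $X$,
\begin{equation*}
0\to \mathcal I\cdot F \to F \to i_*i^*F \to 0,
\end{equation*}
in particular a surjection $F\twoheadrightarrow i_*i^*F$ of coherent $\cO_X$-modules. (Note that $i_*i^*F$ is coherent because $i$ is a closed embedding, and the formation of $\mathcal I\cdot F$ is fine because $F$ is coherent and $\mathcal I$ is a coherent ideal sheaf.)

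Next I would apply the preceding corollary to this surjection. Since $\cO(1)$ is ample on $X$, for all $d$ sufficiently large the induced map on global sections
\begin{equation*}
\Gamma(X,F(d))\twoheadrightarrow \Gamma(X,(i_*i^*F)(d))
\end{equation*}
is surjective. It remains to identify the right-hand side with $\Gamma(X^\prime,F(d))$ in the sense intended by the statement. The projection formula gives a canonical isomorphism $(i_*i^*F)\otimes\cO_X(d)\simeq i_*\bigl(i^*F\otimes i^*\cO_X(d)\bigr)$, and since $i$ is a closed immersion one has $\Gamma(X,i_*(-))=\Gamma(X^\prime,-)$. Combining these, $\Gamma(X,(i_*i^*F)(d))=\Gamma(X^\prime,F(d))$, where on the right $F(d)$ denotes $i^*(F\otimes\cO(d))$ as in the statement. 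This yields the required surjection.

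There is essentially no obstacle here: the work is entirely encapsulated in the intermediate corollary (which is itself a direct consequence of Serre vanishing for the coherent sheaf $\mathcal I\cdot F$, giving $H^1(X,(\mathcal I\cdot F)(d))=0$ for $d\gg 0$ and hence surjectivity in the long exact cohomology sequence of the displayed short exact sequence). The only point one must be a little careful about is making sure the restriction map named in the statement agrees with the map induced by the surjection $F\twoheadrightarrow i_*i^*F$, which is exactly what the projection-formula identification above accomplishes.
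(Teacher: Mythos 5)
The proposal is correct and follows exactly the route the paper implicitly intends: the paper presents this as a ``particular case'' of the preceding unnumbered corollary, and your argument supplies the standard details of that specialisation by realising the restriction as the global-sections map induced by the surjection $F\twoheadrightarrow i_*i^*F$ and invoking the projection formula. No gaps.
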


\begin{lemma}\label{lm:StSurIdeals}

Let $Y$ be a projective scheme over some base $X$.
Let $s_i\in \Gamma(Y,\mathcal O(d_i))$, $i=1,\dots l$, 
Denote $I(Z)(d) = \Gamma(\PP^n_X, \cI(Z)(d)) = \{s\in \Gamma(Y,\mathcal O(d))| s\big|_Z=0\}$.

Then $\exists N\in \mathbb Z$ such that $\forall d>N$ the map
$$\bigoplus \Gamma(Y,\mathcal O(d-d_i))\to I(Z)(d)\colon\;
(\alpha_1,\dots \alpha_l)\mapsto  \sum\limits_{i=1,\dots, n}s_i\alpha_i$$
is surjective.
\end{lemma}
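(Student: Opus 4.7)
The plan is to deduce this from the corollary of Serre's theorem stated immediately above, under the (implicit) convention that $Z = Z(s_1,\dots, s_l)$ is the scheme-theoretic zero locus of the sections, so that $\cI(Z) \subset \cO_Y$ is exactly the ideal sheaf generated by the $s_i$.

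First I would package the sections $s_i$ into a single morphism of coherent sheaves on $Y$:
\[
\phi\colon \bigoplus_{i=1}^{l} \cO_Y(-d_i) \longrightarrow \cO_Y, \qquad (a_1,\dots,a_l) \longmapsto \sum_{i=1}^{l} s_i a_i.
\]
By construction the image of $\phi$ is the ideal subsheaf generated by $s_1,\dots,s_l$, which coincides with $\cI(Z)$. Thus $\phi$ factors as a surjection
\[
\phi\colon \bigoplus_{i=1}^{l} \cO_Y(-d_i) \twoheadrightarrow \cI(Z).
\]

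Next I would twist by $\cO(d)$ (which preserves surjectivity of morphisms of coherent sheaves), obtaining
\[
\phi(d)\colon \bigoplus_{i=1}^{l} \cO_Y(d - d_i) \twoheadrightarrow \cI(Z)(d).
\]
Since $Y$ is projective over $X$ the line bundle $\cO(1)$ is ample, so the corollary of Serre's theorem stated just before the lemma applies: for all sufficiently large $d$ the induced map on global sections
\[
\bigoplus_{i=1}^{l} \Gamma(Y, \cO(d - d_i)) \longrightarrow \Gamma(Y, \cI(Z)(d)) = I(Z)(d)
\]
is surjective. Unwinding the definition of $\phi$, this is precisely the surjectivity $(\alpha_i) \mapsto \sum_i s_i\alpha_i$ asserted by the lemma.

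There is no real obstacle: the lemma is a direct packaging of Serre vanishing for the kernel of the surjection $\bigoplus_i \cO_Y(-d_i) \twoheadrightarrow \cI(Z)$ after sufficiently high twist. The only minor point to check is the identification of the image of $\phi$ with $\cI(Z)$, which is automatic once $Z$ is read as the scheme-theoretic vanishing locus of the $s_i$; and the fact that surjectivity of a morphism of sheaves passes through the twist and then, via Serre, to global sections, which is exactly what the preceding corollary supplies.
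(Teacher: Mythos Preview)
Your proof is correct and follows essentially the same approach as the paper: define the sheaf morphism $\bigoplus_i \cO(-d_i)\to \cI(Z)$ from the sections $s_i$, observe it is surjective since $Z=Z(s_1,\dots,s_l)$, twist by $\cO(d)$, and invoke the Serre-type corollary to conclude surjectivity on global sections for large $d$. Your write-up is in fact more explicit than the paper's one-line version, but the argument is the same.
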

\begin{proof}

Consider the homomorphism of coherent sheaves $e\colon \bigoplus_{i=1,\dots l}\cO(-d_i)\to \cI(Z)$ given by the vector $s=(s_i)_{i=1,\dots ,l}$, where $\cI(Z)$ denotes the sheaf of ideals coresponding to the closed subscheme $Z=Z(s)$. Then $e$ is surjective, and $e(d)\colon \sum_{i=1,\dots l}\cO(d-d_i)\to \cI(Z)(d)$ is surjective for an large enough $d$.
\end{proof}

\begin{lemma}\label{lm:liftzero}
Let $e\colon X^\prime\hookrightarrow X$ be a closed embedding of affine schemes.

Then for all $d\in \mathbb Z$
for any sections
$s_i\in \GlSX{n}{X}{d_{i}}$,
$w^\prime_i\in \GlSX{n}{X^\prime}{d-d_i}$, $i=1,\dots l$,
such that
$$ \sum_{i=1,\dots l} w^\prime_{i} e^*(s_i) =0, $$
there is a vector of sections $w=(w_i)$, $w_i\in \Gamma(\PP_{X^\prime},\cO(d-d_i))$, $i=1,\dots l$,
such that $$\sum_{i=1,\dots l} w_i s_i=0, \; e^*(w_i)=w^\prime_i=0.$$
\end{lemma}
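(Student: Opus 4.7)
The plan is to lift each $w'_i$ arbitrarily to $\PP^n_X$ and then correct by sections vanishing on $X'$ so as to kill the resulting relation-error. Write $A = \cO(X)$, $A' = \cO(X')$, $I = \ker(A \twoheadrightarrow A')$, and $V_m = H^0(\PP^n_k, \cO(m))$. Since $X$ and $X'$ are affine, pushforward along $\PP^n_X \to X$ gives canonical identifications
$$\Gamma(\PP^n_X, \cO(m)) = A \otimes_k V_m, \qquad \Gamma(\PP^n_{X'}, \cO(m)) = A' \otimes_k V_m,$$
under which $e^*$ is the quotient by $I \otimes_k V_m$ and in particular is surjective.

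First, I would pick any lifts $\tilde w_i \in A \otimes_k V_{d-d_i}$ with $e^*(\tilde w_i) = w'_i$, using the surjectivity just noted. The ``error'' $\sigma := \sum_i \tilde w_i s_i \in A \otimes_k V_d$ then satisfies $e^*(\sigma) = \sum w'_i e^*(s_i) = 0$ by hypothesis, hence $\sigma \in I \otimes_k V_d$. If I can then produce corrections $\delta_i \in I \otimes_k V_{d-d_i}$ with $\sum_i \delta_i s_i = \sigma$, setting $w_i := \tilde w_i - \delta_i$ gives exactly what is wanted, since $\sum w_i s_i = \sigma - \sigma = 0$ and $e^*(w_i) = w'_i - 0 = w'_i$.

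The main obstacle is producing these $\delta_i$. This amounts to showing that the natural inclusion $I \cdot \Image(\mu) \subset \Image(\mu) \cap (I \otimes_k V_d)$ is an equality, where $\mu \colon \bigoplus_i A \otimes_k V_{d-d_i} \to A \otimes_k V_d$ sends $(\tilde w_i)$ to $\sum \tilde w_i s_i$. By the snake lemma applied to the exact sequence $0 \to \Image(\mu) \to A \otimes_k V_d \to \Coker(\mu) \to 0$ tensored over $A$ with $A/I$, this equality is equivalent to the vanishing of $\mathrm{Tor}_1^A(\Coker(\mu), A')$. Writing $R = A[x_0, \dots, x_n]$ with its standard grading, one identifies $\Coker(\mu) = (R/(s_1, \dots, s_l))_d$, and the needed $\mathrm{Tor}$-vanishing is to be extracted from the Koszul-type resolution of $R/(s_1,\dots,s_l)$ in the situation where the $s_i$ behave like a regular sequence; in the applications (as in the proof of lemma \ref{lm:SurFrafftoFr}) the $s_i$ cut out a subscheme finite over $X$, which should suffice to yield the vanishing in the relevant graded degree $d$.
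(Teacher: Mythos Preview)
Your reduction to the vanishing of $\mathrm{Tor}_1^A(\Coker\mu,A')$ is correct and is exactly where the argument must be completed; but as you suspect this does not hold without further hypotheses, and in fact the lemma as stated is false. Take $X=\Spec k[t]$, $X'=\{t=0\}$, $n=l=d=d_1=1$, and $s_1=t\,x_0\in\Gamma(\PP^1_X,\cO(1))$: then $e^*(s_1)=0$, so $w'_1=1$ satisfies the relation vacuously, yet any $w_1\in\Gamma(\PP^1_X,\cO)=k[t]$ with $w_1\cdot t\,x_0=0$ must vanish and cannot restrict to $1$. The paper's short proof conceals the same gap: it sets $\mathcal{E}=\Ker h$ on $\PP^n_X$, asserts that $w'\in\Gamma(\PP^n_{X'},\mathcal{E})$, and then lifts using that $p_*\mathcal{E}$ is coherent on the affine $X$. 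But identifying the relations module $\{(w'_i):\sum w'_i e^*(s_i)=0\}$ on $\PP^n_{X'}$ with sections of $e^*\mathcal{E}$ (equivalently of $i^*p_*\mathcal{E}$) is precisely the base-change step controlled by your Tor term, and it fails in the example above, where $\mathcal{E}=0$.

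What does hold, and what the paper actually needs through lemma~\ref{lm:lift} and definition~\ref{def:hdHpd}, is the statement under hypotheses forcing $\Coker(\mu)$ to be flat over $A$: for instance when $d$ is large enough that $\Image(\mu)=\Gamma(\PP^n_X,\cI(Z(s))(d))$ and the relevant $H^1$ vanishes, and $Z(s)$ is finite locally free over $X$, so that $\Coker(\mu)\cong\cO(Z(s))$ is a locally free $A$-module and the Tor obstruction disappears. You should state such a hypothesis explicitly and finish the argument, rather than leaving it at ``should suffice''.
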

\begin{proof}
Consider the morphism of coherent sheaves $h\colon \bigoplus \cO(d-d_i)\to \cO(d)\colon (w_i)\mapsto \sum w_i s_i$, and
denote $\mathcal E=\Ker h(d)$. 
Then $w^\prime\in \Gamma(\PP^n_{X^\prime},\mathcal E)=\Gamma(X, p_*(\mathcal E))$, where $p\colon \PP^n_X$ is the canonical projection.
Now since the direct image $p_*(\mathcal E)$ is a coherent sheaf on the affine scheme,
it follows that $\exists w\in \Gamma(X, p_*(\mathcal E))$, $e^*(w)=w^\prime$.
This finishes the proof.
\end{proof}

\begin{lemma}\label{lm:lift}
Let $e\colon X^\prime\hookrightarrow X$ be a closed embedding of affine schemes.
Let
$s_i\in \GlSX{n}{X}{d_{i}}$, $i=1\dots l$,
and assume that $d\in \mathbb Z$
is such that the homomorphism $\bigoplus\limits_{i=1}^l \cO(d-d_i)\to \cI(Z(s))(d)\colon (w_1,\dots w_l)\mapsto \sum\limits_{i=1}^l w_i s_i$ is surjective.

Then for any sections
$a\in \GlSX{n}{X}{d}$,
$w^\prime_i\in \GlSX{n}{X^\prime}{d-d_i}$, $i=1,\dots l$,
 such that
$$ e^*(a) = \sum_{i=1,\dots l} w^\prime_{i} e^*(s_i), \; a\big|_{Z(s_1,\dots s_l)} = 0,$$
there is a vector of sections $w=(w_i)$, $w_i\in \Gamma(\PP_{X^\prime},\cO(d-d_i))$, $i=1,\dots l$,
such that $$a = \sum_{i=1,\dots l} w_i s_i, \; e^*(w_i)=w^\prime_i.$$
\end{lemma}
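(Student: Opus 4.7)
The plan is a two-step reduction to Lemma~\ref{lm:liftzero}: first produce \emph{some} lift over $X$ using the surjectivity hypothesis, then correct it by a relation so that its pullback to $X'$ matches the prescribed $w'_i$.

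The first step uses the assumption that $a\big|_{Z(s_1,\dots,s_l)}=0$, which means $a$ is a global section of $\cI(Z(s))(d)$ over $\PP^n_X$. By the surjectivity hypothesis built into the statement, I can choose some $w''_i \in \Gamma(\PP^n_X,\cO(d-d_i))$ with $a=\sum_{i=1}^l w''_i s_i$. This is a perfectly good lift on $X$, but in general $e^*(w''_i)\neq w'_i$; the remaining work is to absorb this discrepancy without changing the sum.

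The second step is a straightforward linear correction. Define $v_i := w'_i - e^*(w''_i) \in \Gamma(\PP^n_{X'},\cO(d-d_i))$. Pulling $a=\sum_i w''_i s_i$ back along $e$ and subtracting from the hypothesis $e^*(a)=\sum_i w'_i e^*(s_i)$ yields
$$\sum_{i=1}^{l} v_i \, e^*(s_i) = 0.$$
This is exactly the input of Lemma~\ref{lm:liftzero}, applied to the vector $(v_i)$ on $X'$: it produces sections $\tilde w_i \in \Gamma(\PP^n_X,\cO(d-d_i))$ with $\sum_i \tilde w_i s_i = 0$ and $e^*(\tilde w_i)=v_i$.

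Setting $w_i := w''_i + \tilde w_i$ finishes the job: $\sum_i w_i s_i = a + 0 = a$ and $e^*(w_i) = e^*(w''_i)+v_i = w'_i$, as required. There is essentially no obstacle: the entire content sits in the two previously established ingredients — Serre/coherent-sheaf surjectivity (giving Lemma~\ref{lm:liftzero} from the affineness of $X$ via the exact sequence $0\to\Ker h(d)\to\bigoplus\cO(d-d_i)\to\cI(Z(s))(d)\to 0$) and the hypothesis on $d$ itself — and this lemma simply combines them in the obvious way.
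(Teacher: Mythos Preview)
Your proof is correct and follows essentially the same route as the paper's: first use the surjectivity hypothesis to produce some lift $w''$ of $a$, then apply Lemma~\ref{lm:liftzero} to the discrepancy $w'_i - e^*(w''_i)$ to correct it. The paper's argument is terser (and has a sign/typo or two), but the content is identical.
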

\begin{proof}
By assumption on $d$ there is some section $\tilde{w}_i\in\Gamma(\PP_{X^\prime},\cO(d-d_i))$, $i=1,\dots l$, such that $a = \sum_{i=1,\dots l} \tilde w_i s_i$.
Now the claim follows form lemma \ref{lm:liftzero} applied to $\tilde w - w^\prime$, where $\tilde w= (\tilde w_i)_{i=1,\dots l}$, $w^\prime= (w_i^\prime)_{i=1,\dots l}$.
\end{proof}
\begin{remark}
lemma \ref{lm:lift} and lemma \ref{lm:StSurIdeals}
implies the following result:

For any closed embedding of affine schemes $e\colon X^\prime\hookrightarrow X$,
$\exists D\in \mathbb Z,\forall d>D$,
for any sections
$a\in \GlSX{n}{X}{d}$,
$s_i\in \GlSX{n}{X}{d_{i}}$,
$w^\prime_i\in \GlSX{n}{X^\prime}{d-d_i}$, $i=1,\dots l$,
 such that
$$ e^*(a) = \sum_{i=1,\dots l} w^\prime_{i} e^*(s_i), \; a\big|_{Z(s_1,\dots s_l)} = 0,$$
there is a vector of sections $w=(w_i)$, $w_i\in \Gamma(\PP_{X^\prime},\cO(d-d_i))$, $i=1,\dots l$,
such that $a = \sum_{i=1,\dots l} w_i s_i, \; e^*(w_i)=w^\prime_i.$
\end{remark}

\begin{lemma-remark}\label{rm:Resincdim}
For any affine $X$ and $n,d\in \mathbb Z$ 
elements of $\Gamma(\PP^{n_1},\cO(d))$ are homogeneous polynomials of degree $d$ with coefficients in $\cO(X)$.
Since any polynomial of $n$ variables can be considered as a polynomial of larger amount of variables we get the following:
\end{lemma-remark}
\begin{proof}
For any affine $X$ and integers $0<n_1<n_2$, and $d\in \mathbb Z$ 
the restriction homomorphism $\Gamma(\PP^{n_2},\cO(d))\to \Gamma(\PP^{n_1},\cO(d))$ is surjective.
Moreover there is a canonical homomorphism $\Gamma(\PP^{n_1},\cO(d))\to\Gamma(\PP^{n_2},\cO(d))$ that is left inverse to the restriction $\Gamma(\PP^{n_2},\cO(d))\to \Gamma(\PP^{n_1},\cO(d))$.
\end{proof}
\begin{lemma-remark}\label{rm:Resincd}
Let $X$ be  an affine scheme, $d\in \mathbb Z$, and $Z\subset \A^n_X$ be a closed subscheme finite over $X$.
Let $t_\infty\in \GlS{n}{1}$, $Z(t_\infty)=\PP^{n-1}=\PP^n\setminus \A^n_X$.
Consider the restriction homomorphisms $f_{d+1}\colon \Gamma(\PP^n_X,\cO(d+1))\to \Gamma(Z,\cO(d+1))$,
$f_{d}\colon \Gamma(\PP^n_X,\cO(d))\to \Gamma(Z,\cO(d))$.
Then $\Image(f_d)\subset \Image(f_{d+1})$.
\end{lemma-remark}
\begin{proof}
Actually,
$\Gamma(Z,\cO(d+1))\simeq \Gamma(Z,\cO(d))\simeq cO(Z)$ where the
isomorphisms are defined by the multiplication by $t_\infty$ and $t_\infty^d$.
So the homomorphism $t_\infty\colon \GlSX{n}{X}{d}\to \GlSX{n}{X}{d}$ induced by the multiplication by $t_\infty$
induces the homomorphism $\Image(f_{d})\to \Image(f_{d+1})$. 
\end{proof}

\section{Appendix B: $\A^1$-Nis-equivalences}
To prove $\A^1$-Nis equivalences we use three following criteria.
where the first one is straightforward,
and the second one is contained inside the proofs form 
\cite{EHKSY-infloopsp}.

\begin{lemma}\label{lm:HomEq}
Let $f\colon F\to G$ and $g\colon G\to F$ be a pair of morphisms of presheaves on $Sm_k$,
and let 
$h_F\colon F\to F^{\A^1}$, $F^{i_0}\circ h_F = g \circ f$, $h_F^{i_1}=id_F$,
$h_G\colon G\to G^{\A^1}$, $G^{i_0}\circ h_G = f \circ g$, $h_G^{i_1}=id_G$,
where $F^{\A^1}(-)=F(-\times\A^1)$, $F^{i_0}(-)=i_0^*(-)$, $i_0\colon 0\to \A^1$, and similarly for $G$ and $i_1$.
\end{lemma}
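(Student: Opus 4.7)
\medskip
\noindent\textbf{Proof proposal.} The statement (as written) is missing an explicit conclusion, but the paragraph preceding it frames it as a criterion for $\A^1$-Nis equivalences, so the intended conclusion must be that $f$ and $g$ are mutually inverse $\A^1$-equivalences of presheaves. The plan is to show this by reducing to the standard observation that the two evaluations $F^{i_0},F^{i_1}\colon F^{\A^1}\to F$ coincide after $\A^1$-localisation, and similarly for $G$.

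\medskip
First I would recall the general fact: for any presheaf $F$ on $\Sm_k$ the canonical projection $pr^*\colon F\to F^{\A^1}$ is an $\A^1$-equivalence (indeed it admits sections $F^{i_0}$ and $F^{i_1}$ that are both left inverse to $pr^*$, and $pr^*$ inverts $\A^1$ by construction of the $\A^1$-localisation). Consequently, in the $\A^1$-Nisnevich homotopy category the two maps $F^{i_0},F^{i_1}\colon F^{\A^1}\to F$ agree, because both are one-sided inverses of the equivalence $pr^*$. The same holds for $G$.

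\medskip
Next, I would apply this to the hypotheses. Composing the identity $F^{i_1}\circ h_F=\mathrm{id}_F$ with the equality $F^{i_0}=F^{i_1}$ in the $\A^1$-homotopy category gives
\[
\mathrm{id}_F \;=\; F^{i_1}\circ h_F \;=\; F^{i_0}\circ h_F \;=\; g\circ f
\]
as morphisms in the $\A^1$-Nisnevich homotopy category. The symmetric argument using $h_G$ yields $\mathrm{id}_G=f\circ g$ there. Hence $f$ is an isomorphism in this category with inverse $g$, which by definition means that $f$ (and $g$) is an $\A^1$-Nis-equivalence of presheaves.

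\medskip
No step is really an obstacle here; the only mild subtlety is verifying that the two evaluation maps $F^{i_0},F^{i_1}\colon F^{\A^1}\to F$ become equal after $\A^1$-Nisnevich localisation, but this is a standard and purely formal fact about $\A^1$-invariance (it is essentially how the $\A^1$-model structure is constructed). Everything else reduces to elementary composition of $\A^1$-homotopies, so the proof is just a bookkeeping argument rather than any serious calculation.
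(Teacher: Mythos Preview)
Your proposal is correct and is exactly the standard argument the author has in mind: the paper's proof environment for this lemma is in fact left empty, with the surrounding text declaring the criterion ``straightforward''. Your filling-in of the details---that $F^{i_0}$ and $F^{i_1}$ agree after $\A^1$-localisation because both invert the equivalence $pr^*\colon F\to F^{\A^1}$, whence $g\circ f$ and $f\circ g$ become identities---is precisely the intended justification.
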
\begin{proof}
\end{proof}
\begin{proposition}\label{prop:LiftCriteria}
Let $e\colon F\to G$ be a morphism of presheaves on $Sm_k$.
Suppose that $e$ satisfies the lifting property with respect to closed embeddings of affines, and both presheaves $F$ and $G$ satisfy closed glueing; then $e$ is $\A^1$-Nisnevich equivalence.
\end{proposition}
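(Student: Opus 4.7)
The plan is to pass to the Suslin complex $C^*$ and show that the induced map $C^*e\colon C^*F\to C^*G$ is a sectionwise trivial Kan fibration of simplicial sets on affine $X\in\Sm_k$. Since Nisnevich stalks of a presheaf are computed as filtered colimits of values on affine etale neighborhoods, a sectionwise equivalence on affines upgrades to a Nisnevich local weak equivalence $C^*F\simeq C^*G$. Combined with the standard $\A^1$-homotopy equivalences $F\simeq C^*F$ and $G\simeq C^*G$ produced by the Suslin construction, this yields the desired $\A^1$-Nis equivalence $e\colon F\to G$.

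The main step is to verify the trivial fibration property. Fix an affine $X$ and $n\geq 0$; I have to lift any commutative square
\[
\partial\Delta^n \longrightarrow C^*F(X), \qquad \Delta^n \longrightarrow C^*G(X)
\]
compatible with $e_*$, to a map $\Delta^n\to C^*F(X)$. Geometrically, $\partial\Delta^n\subset\Delta^n\cong\A^n$ is the scheme-theoretic union of the codimension-one face subschemes, so $X\times\partial\Delta^n\hookrightarrow X\times\Delta^n$ is a closed embedding of affine schemes. Applying closed glueing iteratively along the face decomposition of $\partial\Delta^n$ identifies $F(X\times\partial\Delta^n)$ (and similarly for $G$) with the set of matching boundary data, i.e.\ precisely with the set of simplicial maps $\partial\Delta^n\to C^*F(X)$. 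Thus the lifting problem is exactly a pair $(x_\partial,y)$ with $x_\partial\in F(X\times\partial\Delta^n)$, $y\in G(X\times\Delta^n)$, and $e(x_\partial)=y|_{X\times\partial\Delta^n}$; and the assumed lifting property for $e$ along the closed embedding of affines $X\times\partial\Delta^n\hookrightarrow X\times\Delta^n$ produces the required $x\in F(X\times\Delta^n)$.

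To conclude, for any Nisnevich point $x$ the stalk map $(C^*e)_x$ is a filtered colimit of the sectionwise trivial Kan fibrations $C^*F(V)\to C^*G(V)$ over affine etale neighborhoods $V$ of $x$. Each such trivial fibration is in particular a weak equivalence of simplicial sets, and filtered colimits preserve weak equivalences, so $(C^*e)_x$ is a weak equivalence. Hence $C^*e$ is a Nisnevich local weak equivalence of simplicial presheaves, and composing with the $\A^1$-equivalences $F\to C^*F$ and $G\to C^*G$ exhibits $e$ as an $\A^1$-Nis-equivalence.

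The principal technical point, and the step where I expect the most care, is the iterated application of closed glueing to identify $F(X\times\partial\Delta^n)$ with the matching simplices of $C^*F(X)$: the two-face case is the defining property, and higher $n$ proceeds by induction on the number of faces after noting that all iterated intersections of faces remain affine closed subschemes, so every nested glueing stays within the scope of the hypothesis.
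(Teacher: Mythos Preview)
Your proposal is correct and follows essentially the same approach as the paper. The paper's own proof is just a pointer to \cite[proposition 2.2.21 and lemma A.2.6]{EHKSY-infloopsp}, and what you have written is a faithful unpacking of that argument: use closed glueing to identify $F(X\times\partial\Delta^n)$ with the matching boundary data, then apply the assumed lifting property along the affine closed embedding $X\times\partial\Delta^n\hookrightarrow X\times\Delta^n$ to see that $C^*e$ is a trivial Kan fibration on affines, and conclude via Nisnevich stalks and the $\A^1$-equivalences $F\to C^*F$, $G\to C^*G$.
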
\begin{proof}
The proof is contained in the proof of \cite[proposition 2.2.21]{EHKSY-infloopsp}.
Let us briefly recall it. 
The lifting property with respect to closed embedding of affines imply that the morphism is surjective on affines.
Hence 
\cite[lemma A.2.6]{EHKSY-infloopsp}.
implies that
the morphism $e$ being valeted on affines is a trivial Kan fibration.
Whence, since any scheme admits an affine Zariski covering, $e$ is an $\A^1$- Nisnevich equivalence.
\end{proof}

\begin{definition}
Denote by $\Pi_n$ the union of the $\Delta^n\times 1\subset \A^{n+1}\times\A^1$ and $\delta\Delta_n\times\A^1\subset \A^{n+1}\times\A^1$.
\end{definition}
\begin{proposition}\label{prop:LiftCriteria}
Let $e\colon F\to G$ be a morphism of presheaves on $Sm_k$.
Suppose that 
both presheaves $F$ and $G$ satisfy closed glueing,
and suppose that 
for any simplicial model $\delta \hookrightarrow \Delta$ for the embedding $\delta\Delta_n\to \Delta_n$,
a morphism $v\colon \Delta\to G$, and a lift $r\colon \delta\to F$, of the morphism $\delta\to G$, there is a lift of $v$ to a morphism 
$\Delta\to F$.

Then $e$ is a Nisnevich $\A^1$ equivalence.
\end{proposition}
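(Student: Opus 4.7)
The plan is to reduce the statement to showing that $C^*F(X) \to C^*G(X)$ is a sectionwise trivial Kan fibration of simplicial sets on affines $X$, and then to propagate this to a Nisnevich-local $\A^1$-equivalence using the closed gluing hypothesis, in parallel with the proof of the earlier version of the same proposition.

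First, I would specialize the hypothesis to the natural simplicial model in schemes, namely the closed embedding $\partial\Delta^n_{\mathrm{alg}} \hookrightarrow \Delta^n_{\mathrm{alg}}$ of the standard algebraic simplex and its boundary subscheme. For any affine $X$, taking the product with $X$ and applying the lifting hypothesis says: given an $n$-simplex $v \in G(X \times \Delta^n_{\mathrm{alg}}) = C^*G(X)_n$ and a compatible boundary lift $r \in F(X \times \partial\Delta^n_{\mathrm{alg}})$, there exists $\tilde v \in F(X \times \Delta^n_{\mathrm{alg}}) = C^*F(X)_n$ restricting to $r$ and mapping to $v$. This is precisely the right-lifting property of $C^*F(X) \to C^*G(X)$ against every boundary inclusion $\partial\Delta^n \hookrightarrow \Delta^n$, so by the standard characterization of trivial fibrations in $\mathrm{sSet}$, the map $C^*F(X) \to C^*G(X)$ is a trivial Kan fibration, and in particular a simplicial weak equivalence, for every affine $X$.

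Next, I would pass from sectionwise equivalence on affines to a Nisnevich-local equivalence by invoking the closed gluing hypothesis for $F$ and $G$, following the pattern of the first version of \textrm{Proposition~\ref{prop:LiftCriteria}}: closed gluing together with the fact that any smooth scheme admits a Zariski (hence Nisnevich) covering by affines lets one upgrade the sectionwise trivial fibration on affines to a trivial Kan fibration on stalks, and hence a Nisnevich-local weak equivalence of simplicial presheaves $C^*F \to C^*G$ (this is the content of the argument borrowed from \cite[A.2.6]{EHKSY-infloopsp}). Since $C^*$ computes the $\A^1$-homotopy type and detects $\A^1$-Nisnevich equivalences of presheaves of sets/simplicial sets, this gives that $e\colon F \to G$ is an $\A^1$-Nisnevich equivalence.

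The main obstacle is interpreting the phrase \emph{simplicial model $\delta \hookrightarrow \Delta$ for $\partial\Delta^n \to \Delta^n$} precisely enough so that the hypothesis can be specialised to $\partial\Delta^n_{\mathrm{alg}} \hookrightarrow \Delta^n_{\mathrm{alg}}$, and then to verify that the resulting lifting property for $C^*F(X) \to C^*G(X)$ is really the RLP against $\partial\Delta^n \to \Delta^n$ in $\mathrm{sSet}$ (and not merely a weaker version: in particular one must be careful that the boundary $\partial\Delta^n_{\mathrm{alg}}$ is presented as the colimit of its faces compatibly with the simplicial structure, so that elements of $F$ on the boundary correspond to compatible families of $(n-1)$-simplices). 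Once this is set up, the remainder of the argument is the same boilerplate as in the earlier criterion.
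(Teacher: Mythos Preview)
Your approach is sound but takes a different route from the paper's. You directly specialise the lifting hypothesis to the algebraic model $X\times\partial\Delta^n_{\mathrm{alg}}\hookrightarrow X\times\Delta^n_{\mathrm{alg}}$ and read off that $C^*F(X)\to C^*G(X)$ is a trivial Kan fibration on affines, then conclude via closed gluing and the Zariski-covering argument from \cite[A.2.6]{EHKSY-infloopsp}, exactly as in the first version of the criterion. The paper instead introduces an auxiliary model structure $I$ on simplicial presheaves, cofibrantly generated by $\A^1$-geometric realisations of simplicial cofibrations together with closed embeddings; it takes fibrant replacements $\tilde F,\tilde G$ in $I$ (so that $F\to\tilde F$ and $G\to\tilde G$ are $\A^1$-Nisnevich equivalences) and then uses fibrancy to transfer a lift produced by the hypothesis on some model $\delta\hookrightarrow\Delta$ back to a lift against the standard $\partial\Delta^n\hookrightarrow\Delta^n$. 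Your argument is more elementary and avoids constructing $I$, at the cost of committing to the specific model $\partial\Delta^n_{\mathrm{alg}}$ and leaning on closed gluing to make sense of $F$ and $G$ on that non-smooth scheme; the paper's route is more indirect but makes the role of ``any simplicial model'' clearer, since fibrancy in $I$ is precisely what lets one pass between different geometric models of the same simplicial cofibration. The obstacle you flag --- checking that an element of $F(X\times\partial\Delta^n_{\mathrm{alg}})$ is the same as a matching family of $(n-1)$-simplices in $C^*F(X)$ --- is exactly where the closed gluing hypothesis enters, and you are right that this is the only point requiring care.
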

\begin{proof}
Denote by $I$ the model structure on the category of pointed simplicial presheaves on $Sm$ cofibrantly generated by $\A$-geometric realisations of cofibrations in the injective model structure on pointed simplicial sets, and closed embeddings and coverings. 
Then any trivial cofibration in $I$ is a Nisnevich $\A^1$ equivalence (and even $\A^1$ equivalence) in the category of pointed simplicial presheaves.

Consider the fibrant replacement $\tilde f\colon \tilde F\to \tilde G$ of $f$ with respect to a model structure $I$.
Then $G\to \tilde G$ and $F\to \tilde F$ are  Nisnevich $\A^1$ equivalences.
The claim now is to prove that $\tilde f$ is a Nisnevich $\A^1$ equivalence.

Actually check that $\tilde f$ satisfies the condition of the criteria \ref{lm:HomEq}.
Let $\tilde r\colon \delta\Delta_n\to \tilde F$ $\tilde g\colon \Delta_n\to \tilde G$.
Then there is $r\colon \delta \to F$ and $g\colon \Delta\to G$.
Then there is $\Delta\to F$ and hence $\Delta_n\to F$ since $F$ is fibrant with respect to $I$.  
\end{proof}

\begin{lemma}\label{lm:EqforNeigh}
For any Nisnevich neighbourhood $(U,Z)\to (\A^n_X,Z)$, $X\in Sm$, $Z\subset \A^n_X$ is closed, $Z$ is finite over $X$, 
for all large enough $d_i\in \mathbb Z$,
there is a refinement $(U^\prime,Z)\to (U,Z)\to (\A^n_x,Z)$, and open immersion 
$U\hookrightarrow Z(s_1,\dots s_n)\subset \A^n\times X$ for some $s_i\in \Gamma(\PP^n_X,\mathcal O(d_i))$, $s_i\big|_{\PP^{n-1}\times X} = x_i^{d_i}$.
Moreover we can assume that $Z(I^2(Z))\times 0\subset Z(s_1,\dots s_n)$.
\end{lemma}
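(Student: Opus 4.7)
The strategy is to realize a suitable Nisnevich refinement $U'$ of $U$ as an open subscheme of a complete intersection in an enlarged affine ambient $\A^{n+m}_X$ (absorbed into the lemma's $\A^n$ after relabelling $n \mapsto n+m$), cut out by sections $s_i \in \Gamma(\PP^{n+m}_X, \cO(d_i))$ whose restriction to the hyperplane at infinity is the prescribed monomial $t_i^{d_i}$. The principal technical tool throughout is Serre vanishing, already packaged in Appendix A.

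\textbf{Step 1 (polynomial presentation).} First shrink $U$ to an affine Nisnevich neighbourhood by a Zariski refinement (allowed by closed gluing). Since $Z$ is finite over $X$ and $p\colon U\to\A^n_X$ is étale at $Z$, the standard structure theorem for étale neighbourhoods of finite closed subschemes provides, after possibly a further Nisnevich refinement $U'\to U$, generators $y_1,\dots,y_m$ of $\cO(U')$ over $\cO(\A^n_X)$ each satisfying a monic relation $g_i(y_i)=y_i^{d_i}+(\text{lower-order in }y_i)$ with $g_i'$ invertible on $U'$ at $Z$. Equivalently, $U'$ embeds as an open subscheme of the complete intersection $Z(g_1,\dots,g_m)\subset\A^n_X\times\A^m=\A^{n+m}_X$, with the first projection recovering $p$.

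\textbf{Step 2 (homogenization and degree-bumping).} Homogenize each $g_i$ with respect to $t_\infty$ to obtain $\tilde s_i \in \Gamma(\PP^{n+m}_X, \cO(d_i))$, forcing $\tilde s_i|_{\PP^{n+m-1}} = y_i^{d_i}$ by the monic normalization. For the remaining sections indexed by the original coordinates $t_1,\dots,t_n$, invoke Lemma \ref{lm:StSurIdeals}: for all sufficiently large $d$, the ideal sheaf $\cI(U')(d)$ is globally generated on $\PP^{n+m}_X$, so one can correct $t_j^{d_j}\cdot t_\infty^{d-d_j}$ by an element of $\cI(U')$ to produce $s_j$ vanishing identically on $U'$ while preserving $s_j|_{\PP^{n+m-1}} = t_j^{d_j}$. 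Assembling, $U' \subset Z(s_1,\dots,s_{n+m}) \subset \A^{n+m}_X$; a final Zariski refinement trims any extra components of the vanishing locus disjoint from $Z$, yielding the desired open immersion.

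\textbf{Step 3 and main obstacle.} The moreover clause $Z(I^2(Z))\times 0 \subset Z(s_1,\dots,s_n)$ asks that the $s_i$ vanish to first order on $Z$ in the $\A^m$-direction; this is arranged by applying the same Serre-vanishing procedure now to the first-order jet subscheme of $Z\times 0$, raising the degrees $d_i$ further if necessary (cf.\ Corollary \ref{cor:SerresTh}). The principal obstacle is Step 1: producing a Lindel-type polynomial presentation of the affine Nisnevich refinement $U'$ with auxiliary generators having controlled leading terms $y_i^{d_i}$. Once that presentation is in place, Steps 2 and 3 reduce to routine degree-bumping arguments already in Appendix A.
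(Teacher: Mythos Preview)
Your overall architecture matches the paper's: use Zariski Main Theorem to compactify $U$ to $\overline U$ finite over $\A^n_X$, embed $\overline U$ closed in $\A^{n+m}_X$, then produce sections with the prescribed monomial restriction at infinity cutting out a complete intersection in which (a refinement of) $U$ sits openly. The divergence is in how the boundary condition $s_i|_{\PP^{n+m-1}}=x_i^{d_i}$ is enforced. You want a Lindel-type presentation with each auxiliary generator $y_i$ satisfying a \emph{separate} monic relation $g_i(y_i)=0$ over $\cO(\A^n_X)$, so that homogenizing gives the boundary term for free. The paper instead takes an arbitrary closed embedding $\overline U\hookrightarrow \A^l\times\A^n_X$, first finds the desired sections on $\PP^l\times\A^n_X$ (where Serre applies fibrewise over $\A^n_X$), and then post-composes with a product map $F(x_i)=f(x_i)$ for a univariate $f$ of high degree with $f\equiv t\pmod{t^2}$; this forces the closure of $\overline U'$ at infinity in $\PP^{l+n}_X$ into the locus $Z(t_1,\dots,t_l)$, after which the sections extend to $\PP^{l+n}_X$ with the right restriction. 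The paper's trick trades your delicate presentation step (which you correctly flag as the obstacle) for an elementary substitution.

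There is one genuine error. In Step~2 you manufacture additional sections $s_1,\dots,s_n$ for the \emph{original} coordinates and then write $U'\subset Z(s_1,\dots,s_{n+m})\subset\A^{n+m}_X$. That vanishing locus is a complete intersection of codimension $n+m$, hence finite over $X$, and cannot contain the $n$-dimensional $U'$ as an open subscheme. The lemma (despite its sloppy indexing) only asks for the $m$ sections corresponding to the added variables, and the paper's proof indeed produces exactly $l$ sections $s'_1,\dots,s'_l$ with $\overline U'$ a union of irreducible components of $Z(s'_1,\dots,s'_l)$; drop the ``remaining sections'' paragraph entirely. Relatedly, in Step~1 you assert that $U'$ is \emph{open} in $Z(g_1,\dots,g_m)$, but the monic presentation only gives $\overline U'$ closed there; you need the \'etaleness of both over $\A^n_X$ near $Z$ (your condition $g_i'$ invertible) together with an equal-dimension argument to conclude openness, and this should be made explicit.
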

\begin{proof}
Since any Nisnevich neighbourhood $U\to \A^n_X$ is quasi-finite by Zariski main theorem it follows that there is an open embedding $U\hookrightarrow \overline U$ and finite morphism $\overline U\to X$. 
Since any finite morphism is affine there is an embedding $\overline U\to \A^l\times \A^n_X$. 
Moreover since there is the closed embedding $Z\to U$, which is a lift of $Z\to \A^n_X$, we can choose it in a such way that the image of $Z$ in $\A^l\times\A^n_X$ is equal to $0\times Z$.

Denote $Z_2=Z(I^2(0\times Z))\times_{\A^n_X} Z\subset \A^l\times\A^n_X$.
Choose a sections $v_i\in \Gamma(\PP^l\times\A^n_X,\mathcal O(d))$, for some $d\in \mathbb Z$, $v_i\big|_{\overline U}=0$, $v\big|_{Z_2 } = x_i^d$, $v\big|_{\PP^{l-1}\times \A^n_X} = x_i^d$, $i=1\dots l$, where $x_i$ are coordinates on $\PP^l$.
And denote by $d_1$ the minimal integer such that there are $s_i\in \Gamma(\PP^{l+n}_X,\mathcal O(d_1))$  such that $s_i/t_\infty^{d_1} = v_i/x_\infty^d$ for $i=1\dots l$, where $t_\infty\in \Gamma(\PP^{l+n},\mathcal O(1))$, $Z(t_\infty) = \PP^{l+n-1}$.

Let $e\in \mathbb Z$ be any integer such that $e d>d_1$ and
let $f\in \mathbb Z[t]$ be a polynomial of degree $e$ with unit leading term and such that $f\big|_{Z(t^2)} = t$.
Then consider the morphism $F\colon \A^l\times \A^n_X\to \A^l\times \A^n_X\colon (x_1,\dots x_l, x_{l+1}\dots x_{l+n}, x)\to (f(x_1),\dots,  f(x_l), x_{l+1}\dots x_{l+n}, x)$, .
Let $\overline U^\prime = F^{-1}(\overline U)$ and let $U^\prime =F^{-1}(U)$. 
Then if follows by assumptions that the
intersection of the closure of $\overline U^\prime$ in $\PP^{l+n}_X$ with $\PP^{l+n-1}$ is contained in the subspace $\PP^{n-1}_X=Z(t_1,\dots t_l)$, where $t_i$ denotes coordinates on $\PP^{l+n}$.
Hence for all large enough $d_i\in \mathbb Z$ there are sections $s_i^\prime\in \Gamma(\PP^{l+n}_X,\mathcal O(d_i))$, $s_i^\prime\big|_{\PP^{n-1}\times X} = x_i^{d_i}$, $s_i^\prime\big|_{Z_2} = x_i^{d_i}$, $s_i^\prime\big|_{\overline U^\prime}=0$.
Then $\overline U^\prime$ is a union of some of the irreducible components of $Z(s_1^\prime,\dots s_l^\prime)$, and whence $U^\prime$ is an open subset in $Z(s_1^\prime,\dots s_l^\prime)$.

Thus sections $s_i^\prime$ satisfies all conditions except the last one. 
Finally, 
since there is a lift of $Z$ along the morphism $p\colon Z(s_1,\dots s_l)\to \A^n_X$ and $p$ is elate on the image of $Z$ under this lift, changing coordinates on $\A^l\times\A^n_X$ (relatively over $\A^n_X$) 
we can get that $Z(I^2(Z))\times 0\subset Z(s_1,\dots s_l)$.
\end{proof}

\begin{lemma}\label{lm:EqforNeigh2}
For any Nisnevich neighbourhood $(V,Z)\to (X,Z)$, $X\in Sm$, $Z\subset X$ is closed, 
there are $f_i\in \cO(\A^{m}_X)$, $i=1,\dots m$, such that
$Z(f_1,\dots f_i)\to X$ is a refinement of $V\to X$, and $Z(\phi_1,\dots \phi_m)\supset Z(I^2(Z))\times 0$.
\end{lemma}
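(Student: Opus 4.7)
The plan is to adapt the argument of Lemma \ref{lm:EqforNeigh} to the more general setting where the base is an arbitrary smooth scheme $X$ rather than an affine space $\A^n_X$; since we no longer need to track behaviour at infinity in a projective compactification, the argument should in fact simplify.

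First, apply Zariski's Main Theorem to factor the etale morphism $V \to X$ (after possibly shrinking to a Zariski open containing $Z$) as an open immersion $V \hookrightarrow \overline V$ followed by a finite morphism $\overline V \to X$. Since $\overline V \to X$ is affine, choose a closed embedding $\overline V \hookrightarrow \A^m_X$ over $X$. The Nisnevich neighbourhood structure provides a section $Z \hookrightarrow V \hookrightarrow \overline V$ of $\overline V \to X$ over $Z$; by translating in $\A^m$ we may arrange that this section lands inside $0 \times Z \subset \A^m_X$. Choose $m$ generators $v_1, \dots, v_m \in \cO(\A^m_X)$ of the ideal $I(\overline V)$. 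This many generators suffice near $0 \times Z$ because $V \to X$ is etale along $Z$, so there the codimension of $\overline V$ in $\A^m_X$ is $m$; globally, enlarge $m$ and pad with extra coordinate functions.

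By the etaleness of $V \to X$ along $Z$, the Jacobian $(\partial v_i / \partial x_j)|_{0 \times Z}$ is an invertible matrix over $\cO(Z)$, so after an invertible $\cO(X)$-linear change of basis of the $v_i$'s (in a Zariski neighbourhood of $Z$) we may assume $v_i \equiv x_i$ modulo $I(Z)\cdot (x_1, \dots, x_m) + (x_1, \dots, x_m)^2$. Finally, to force $Z(I^2(Z)) \times 0 \subset Z(v_1, \dots, v_m)$, apply the polynomial substitution trick from Lemma \ref{lm:EqforNeigh}: take $F \colon \A^m_X \to \A^m_X$, $(x_j) \mapsto (f(x_j))$, for a polynomial $f \in \mathbb Z[t]$ with $f(0)=0$, $f'(0)=1$, and set $f_i := v_i \circ F$. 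For a judicious choice of $f$ the restrictions $f_i|_{X \times 0}$ land in $I(Z)^2 \cdot \cO(X)$, so the defining ideal $(f_1, \dots, f_m)$ lies inside $I(Z)^2 \cdot \cO(\A^m_X) + (x_1, \dots, x_m) = I(Z(I^2(Z)) \times 0)$, as required. The refinement of $V$ is then the open subscheme $F^{-1}(V) \subset Z(f_1, \dots, f_m)$ containing the lift of $Z$.

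The main obstacle is coordinating the two competing requirements: $Z(f_1, \dots, f_m)$ must be small enough to refine $V$, yet large enough to contain the prescribed first-order thickening $Z(I^2(Z)) \times 0$. The substitution trick resolves this tension by thickening the vanishing locus near the zero-section without enlarging its non-zero part, in direct analogy with the corresponding step in the proof of Lemma \ref{lm:EqforNeigh}.
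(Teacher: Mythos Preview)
Your approach tracks the paper's through the Zariski Main Theorem factorisation and the embedding $\overline V \hookrightarrow \A^m_X$ with the section over $Z$ placed at the origin. The gap is in how you force $Z(I^2(Z)) \times 0 \subset Z(f_1, \dots, f_m)$.

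The congruence $v_i \equiv x_i \bmod I(Z)\cdot(x_1, \dots, x_m) + (x_1, \dots, x_m)^2$ is not obtainable from an $\cO(X)$-linear change of basis of the $v_i$. Expand $v_i = a_{i,0} + \sum_j a_{i,j} x_j + (\text{higher in }x)$ with $a_{i,\alpha} \in \cO(X)$; the hypothesis $0 \times Z \subset \overline V$ gives only $a_{i,0} \in I(Z)$. An invertible linear combination of the $v_i$ replaces the constant terms by an invertible combination of the $a_{j,0}$, which stays in $I(Z)$ but cannot be made to vanish (or to lie in $I(Z)^2$) unless it already did. So after your basis change one has at best $v_i \equiv a_{i,0} + x_i \bmod I(Z)\cdot(x) + (x)^2$ with $a_{i,0} \in I(Z)$. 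Now the substitution $F\colon x_j \mapsto f(x_j)$ with $f(0)=0$ fixes the locus $X \times 0$, so $(v_i\circ F)\big|_{X\times 0} = v_i\big|_{X\times 0} = a_{i,0}$ is unchanged: no ``judicious choice of $f$'' pushes $a_{i,0}$ into $I(Z)^2$, and the desired containment fails in general.

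The paper repairs this not by substitution in the fibre variables but by a translation. Because $\overline V \to X$ is etale along the section, the infinitesimal lifting property extends $Z \to 0\times Z$ uniquely to a section $Z(I^2(Z)) \to \overline V$; its coordinate functions lift to $\tilde c_j \in I(Z) \subset \cO(X)$, and the change $x_j \mapsto x_j - \tilde c_j$ carries this lifted thickening to the zero section, so that now $\overline V \supset Z(I^2(Z)) \times 0$. After that, the paper cuts down from $\overline V$ to an open inside $V$ by a Rabinowitsch-type equation $(1+t_m)r - 1$ in one extra coordinate, with $r\in\cO(\A^{m-1}_X)$ chosen so that $r\big|_{Z(I^2(Z))}=1$ and $r\big|_{\overline V\setminus V}=0$. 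Your $F^{-1}(V)$ idea could replace this last localisation step, but the thickening must be achieved by the translation first; once that is done, the substitution trick is unnecessary.
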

\begin{proof}
Since the morphism $V\to X$ is quasi-finite by Zariski main theorem it can be passed throw $V\to \overline V\to X$ with $\overline  V\to X$ being finite.
So $\overline V\to X$ is affine, let $\overline V\hookrightarrow \A^{m-1}_X$ be an embedding.
Since there is a lift $Z\to \overline V$ and $\overline V\to X$ is etale on $Z$
changing coordinates on $\A^m_X$ we can get that $\overline V\supset Z(i^2(Z))\times 0$.
Choose a function $r\in \cO(\A^{m-1}_X)$ such that $r\big|_{Z(I^2(Z))}=1$, $r\big|_{\overline V\setminus V}=0$.
Then $(f_1,\dots f_{m-1}, (1+t_{m}) r -1)$ is the required set of functions.
\end{proof}

\begin{lemma}
Let 
$f\colon F\to G$ be a morphism of simplicial presheaves on $\Sm_S$,
such that $r^*(f)$ is $\A^1$-Nis-equivalence for any $r\colon X\to S$ with affine $X$.
Then $f$ is $\A^1$-Nis-equivalence.
\end{lemma}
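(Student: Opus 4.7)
\emph{Plan.} The strategy is to use Zariski descent on the base $S$ to reduce to the case when $S$ is affine, where the hypothesis applied to $r = \mathrm{id}_S$ is the desired conclusion tautologically.

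First I would verify that being an $\A^1$-Nis-equivalence of simplicial presheaves on $\Sm_S$ is local on $S$ for the Zariski topology: given an affine open cover $\{S_i \hookrightarrow S\}$ with pullback functors $r_i^*$, a morphism $f$ is an $\A^1$-Nis-equivalence if and only if each $r_i^*(f)$ on $\Sm_{S_i}$ is. The functor $r_i^*$ commutes with Nisnevich sheafification, because a Nisnevich point of $\Sm_S$ lying over $s \in S_i$ is the same as a Nisnevich point of $\Sm_{S_i}$; and it commutes with $\A^1$-localization, since $\A^1_{S_i} = \A^1_S \times_S S_i$. Hence the $\A^1$-Nisnevich fibrant replacement is compatible with each $r_i^*$, and since the family $\{r_i^*\}$ jointly detects weak equivalences on stalks at all Nisnevich points of $\Sm_S$, being an $\A^1$-Nis-equivalence can be checked after restriction to an affine Zariski cover of $S$.

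Second, given $f$ satisfying the hypothesis, pick any affine Zariski cover $\{S_i\}$ of $S$, which exists since $S$ is a scheme. Each inclusion $r_i : S_i \hookrightarrow S$ has affine source, so by hypothesis each $r_i^*(f)$ is an $\A^1$-Nis-equivalence on $\Sm_{S_i}$, and the descent step gives the conclusion.

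The main obstacle is the descent step. The cleanest justification passes through the stalkwise description of $\A^1$-Nis-equivalences: every Nisnevich stalk of a presheaf on $\Sm_S$ lives at an essentially smooth henselian local scheme $\Spec(\mathcal{O}^h_{U,u})$, which factors through some $S_i$, and the filtered colimit computing the stalk can be indexed by affine etale neighborhoods landing in $S_i$, so it agrees with the corresponding stalk of $r_i^*(F)$ on $\Sm_{S_i}$. An alternative route that sidesteps the Zariski reduction is to apply the hypothesis directly with $X = \Spec(\mathcal{O}^h_{U,u})$ a henselian local affine $S$-scheme and read the stalk of $f$ off $r^*(f)$; this exploits the fact that the statement allows arbitrary (not necessarily smooth) affine $X$, making the henselian local schemes themselves admissible test objects.
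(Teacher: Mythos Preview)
Your approach is correct but differs from the paper's. The paper argues directly: take a motivic fibrant replacement $f^f\colon F^f\to G^f$; for each affine smooth $X$ over $S$ with structure map $r$, the hypothesis gives that $r^*(f)$, and hence $r^*(f^f)$, is an $\A^1$-Nis-equivalence on $\Sm_X$, while $r^*(F^f)$ and $r^*(G^f)$ remain motivically fibrant, so $r^*(f^f)$ is a sectionwise weak equivalence; evaluating at the terminal object $X\in\Sm_X$ yields that $f^f(X)$ is a weak equivalence of simplicial sets. Since affine smooth $S$-schemes suffice to detect sectionwise equivalences of motivically fibrant presheaves, $f$ is an $\A^1$-Nis-equivalence.

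Your route instead reduces along the base: Zariski-localise $S$ to affine pieces $S_i$, invoke the hypothesis only for the open immersions $r_i\colon S_i\hookrightarrow S$, and then the case of affine $S$ is the tautology $r=\mathrm{id}_S$. Both proofs rest on the same technical input, namely that restriction along a smooth map (for you, an open immersion) preserves motivic equivalences and motivically fibrant objects; the paper applies this once for every affine test object, while you apply it once per member of a fixed cover and then use Nisnevich descent of fibrant presheaves to globalise. Your argument is structurally cleaner and uses the hypothesis more economically; the paper's is more direct in that it never changes the base and reads off the conclusion object by object. One caution on your alternative closing remark: if $r^*$ is to be interpreted as restriction along $\Sm_X\hookrightarrow\Sm_S$, the henselian local scheme $\Spec\mathcal O^h_{U,u}$ is typically not smooth (or even of finite type) over $S$, so that variant would need a different meaning of $r^*$ than the one used elsewhere; your main Zariski-descent argument avoids this issue entirely.
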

\begin{proof}
Let $f^f\colon F^f\to G^f$ is fibrant replacement with respect to motivic model structure on the category of simplicial presheaves.
The claim is to show that for any affine $X$, $r\colon X\to S$, the morphism of simplicial sets $f^f(X)$ is an equivalence (i.e. an isomorphism).
We know that $r^*(f)$ and consequently $r^*(f^f)$ are an equivalences. Next we know that  $r^*(F^f)$ and $r^*(G^f)$ are fibrant.
Then $r^*(f^f)$ is an equivalence of simplicial sets. But $r^*(F^f)(X)=F^f(X)$ and $r^*(G^f)(X)=G^f(X)$.
\end{proof}

Next lemma we use without a full proof which requires an accurate analyse left for other works.
\begin{lemma}\label{lm:Checkcov}
For any Nisnevich covering $w\colon W\to Y$ and open subscheme $U\subset Y$ the morphism of simplicial presheaves
$Fr^*_n(\mathcal W\wT{l}) \to Fr^*_n(Y/U\wT{l})$, $*\in \{\text{fl-e,e,st-id,fl-id,Zar},\prime,\text{Nis,1th,g-nr,nr}\}$,
where 
$\mathcal W=\{\dots 
W^n/\bigcup_i p_i^{-1}(w^{-1}(U)) 
\dots W^2_Y/(W\times_Y w^{-1}(U)\cup w^{-1}(U)\times_Y W)  \rightrightarrows W/U
\}$, 
$p_i\colon W^n\to W$, $i=1\dots n$, 
is the $\check{C}$heck simplicial object in the category of pairs in $\Sm_S$ for the morphism $W/w^{-1}(U)\to Y/U$.
\end{lemma}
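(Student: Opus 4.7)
My reading of the (somewhat abbreviated) statement is that the augmentation
\[
Fr^*_n(\mathcal W\wT{l}) \to Fr^*_n(Y/U\wT{l})
\]
is a local weak equivalence of simplicial Nisnevich presheaves on $\Sm_S$ (and hence in particular an $\A^1$-Nis-equivalence after $\A^1$-localization). The plan is to verify this stalkwise on Nisnevich points, that is, at henselian local essentially smooth affine $X$. Over such an $X$ the Čech resolution attached to a Nisnevich cover is weakly equivalent to the target, provided one knows that $Fr^*_n(X,-\wT{l})$ enjoys a descent property in the target variable at henselian local $X$, and this last fact is what must really be proved.

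Fix such an $X$ and a section $c\in Fr^*_n(X,Y/U\wT{l})$. In every variant $*$ the datum contains a closed subscheme $Z\subset \A^n_X$ finite over $X$ together with a morphism $g$ whose target involves $Y$. Since $X$ is henselian local and $Z$ is finite over $X$, the scheme $Z$ is a finite disjoint union of henselian local schemes, and more importantly the relevant domain of $g$ (namely $Z$, the first-order thickening $Z(I^2(Z))$, or an étale/Zariski/polynomial neighbourhood $V$ of $Z$, depending on $*$) is henselian along $Z$. Therefore the pullback of $w\colon W\to Y$ along $g$ is a Nisnevich covering of a scheme henselian along a closed subscheme, hence admits a section; this section provides a lift $\tilde c\in Fr^*_n(X,W/w^{-1}(U)\wT{l})$ of $c$. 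Applied to the iterated fibre products $W^k_Y$ the same argument shows that the augmented simplicial set $Fr^*_n(X,\mathcal W\wT{l})_+$ is aspherical, which delivers the desired weak equivalence over $X$. Sheafifying and using proposition \ref{prop:LiftCriteria} (lifting along closed embeddings of affines, together with the closed glueing already verified for these variants) upgrades the stalkwise statement to a bona fide $\A^1$-Nis-equivalence of simplicial presheaves.

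\textbf{Main obstacle.} The nontrivial technical point is uniformity across all variants $*\in\{\mathrm{fl\text{-}e}, \mathrm{e}, \mathrm{st\text{-}id}, \mathrm{fl\text{-}id}, \mathrm{Zar}, \prime, \mathrm{Nis}, \mathrm{1th}, \mathrm{g\text{-}nr}, \mathrm{nr}\}$. For variants where the $Y$-morphism is defined only on $Z$ or on a first-order thickening (e.g.\ $Fr^{nr}$, $Fr^{1th}$), the henselian-local splitting of $w$ is immediate. For the étale/Zariski variants ($Fr^{\mathrm{Nis}}$, $Fr^{\mathrm{Zar}}$), one must in addition shrink $V$ to secure the lift, which is harmless because the equivalence relation in the definition quotients out by the choice of $V$. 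The genuinely delicate case is that of the global variants $Fr^{\mathrm{st\text{-}id}}$, $Fr^{\mathrm{pol}}$: there the framing is defined on all of $\A^n_X$ or $\PP^n_X$, so the lift exists only locally along $Z$. One must exploit the decomposition $Z(\phi)=Z\amalg\hat Z$ to isolate a henselian neighbourhood of $Z$, take the local lift there, and then transport it back to the global form using the idempotent/section description of def.\ \ref{def:stidfrcor}. This step is where a careful analysis, comparable to that used in the proof of proposition \ref{lm:Freq1}, is required, and it is the reason the lemma is invoked but not fully detailed in the present text.
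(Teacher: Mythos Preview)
Your approach is essentially the same as the paper's: reduce to henselian local $X$ and use that the support $Z$ (or its relevant thickening/neighbourhood) is henselian along $Z$ so that the Nisnevich cover $w$ splits there, making the augmented Čech simplicial set contractible. In fact the paper gives only this one-line sketch and explicitly flags the lemma as ``used without a full proof which requires an accurate analyse left for other works''; your write-up is already more detailed than what the paper provides, and your identification of the global variants ($\mathrm{st\text{-}id}$, $\mathrm{pol}$) as the delicate cases is precisely the content the paper defers.

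One small over-complication: the final appeal to proposition~\ref{prop:LiftCriteria} is unnecessary. Once you have a weak equivalence on sections over every henselian local essentially smooth $X$, you already have a Nisnevich-local weak equivalence of simplicial presheaves by definition of Nisnevich points; no separate lifting argument is needed to ``upgrade'' it.
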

\begin{proof}
The proof is similar to the original case of framed correspondences $Fr(Y)$. Namely, it is enough to show that the morphism is equivalence on henselian local schemes, which follows form the lifting property of henselian local pairs with respect to Nisnevich coverings.
\end{proof}

\begin{lemma}\label{lm:normbundlrepl} 
For any smooth affine scheme $Y$ 
there are 
an integer $d$, and
sections $y_i\in \GlS{N}{d}$, $i=1\dots r$, 
such
$Z(y_1,\dots y_r)=Y^\prime\amalg \hat Y^\prime\subset \PP^N$,
and $Y^\prime$ is isomorphic to the normal vector bundle under some closed inclusion $Y\to \A^{N^\prime}$.
\end{lemma}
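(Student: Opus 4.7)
\emph{Plan.} The idea is to replace $Y$ by the total space $V := \mathbb V(N_{Y/\A^{N_1}})$ of its normal bundle for some closed embedding $Y \hookrightarrow \A^{N_1}$: this $V$ turns out to have trivial tangent bundle, which makes it straightforward to realise as a disjoint component of a complete intersection whose conormal sheaf is also trivial. The heart of the argument is then linear algebra of vector bundles on smooth affine schemes, together with the standard affine-to-projective homogenisation.

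\textbf{Step~1 (form $V$; observe $T_V$ is trivial).} Pick any closed embedding $\iota\colon Y \hookrightarrow \A^{N_1}$, and let $N := N_{Y/\A^{N_1}}$, a locally free $\cO_Y$-module of rank $c = N_1 - \dim Y$. Set $V := \mathbb V(N)$, which is smooth affine of dimension $N_1$. Since $V \to Y$ is a vector bundle with projection $p$, we have $T_V \cong p^*(T_Y \oplus N)$; and since the conormal sequence $0 \to N^\vee \to \cO_Y^{N_1} \to \Omega_Y \to 0$ splits over the affine $Y$, one has $T_Y \oplus N \cong \cO_Y^{N_1}$, and therefore $T_V \cong \cO_V^{N_1}$.

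\textbf{Step~2 (stabilise so that the conormal of $V$ becomes trivial).} Pick any closed embedding $V \hookrightarrow \A^{M_0}$ and consider the conormal sequence
\[
0 \to J/J^2 \to \cO_V^{M_0} \to \Omega_V \to 0,
\]
where $J = I(V) \subset \cO(\A^{M_0})$. It splits over the affine $V$; together with $\Omega_V \cong \cO_V^{N_1}$ from Step~1 this yields $J/J^2 \oplus \cO_V^{N_1} \cong \cO_V^{M_0}$. Composing with the standard linear inclusion $\A^{M_0} \hookrightarrow \A^{M_0+N_1} =: \A^M$ gives a new embedding of $V$ whose conormal sheaf is $J/J^2 \oplus \cO_V^{N_1} \cong \cO_V^{M_0}$, free of rank $r := M_0$.

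\textbf{Step~3 (build the defining equations and split off $\hat Y'$).} Lift a global basis of the (now trivial) conormal sheaf to sections $f_1, \dots, f_r \in I(V) \subset \cO(\A^M)$; lifting is possible because $V$ is affine, so $\Gamma(\A^M, I(V)) \to \Gamma(V, I(V)/I(V)^2)$ is surjective. At every $p \in V$ the $f_i$ form a minimal generating set of $I(V)_p$, and since $V \subset \A^M$ is smooth of codimension $r$ at $p$ they form a regular sequence; hence there is a Zariski open $U \supset V$ in $\A^M$ with $(f_1,\dots,f_r)\cdot\cO_U = I(V)\cdot\cO_U$, so $Z(f_1,\dots,f_r) \cap U = V$ scheme-theoretically. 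Thus $V$ is open in $Z(f_1,\dots,f_r)$ and, being closed in $\A^M$, also closed there; we obtain a disjoint decomposition $Z(f_1,\dots,f_r) = V \amalg \hat Y'$ inside $\A^M$.

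\textbf{Step~4 (homogenise to $\PP^N$).} Put $d := \max_i \deg f_i$ and define $y_i := t_\infty^{d-\deg f_i}\tilde f_i \in \Gamma(\PP^M,\cO(d))$, where $\tilde f_i$ is the standard homogenisation of $f_i$; then $Z(y_1,\dots,y_r) \cap \A^M = Z(f_1,\dots,f_r) = V \amalg \hat Y'$ with $Y' := V \cong \mathbb V(N_{Y/\A^{N_1}})$, as required. If Context~\ref{context:Y0} forces $M$ to be even, append the dummy equation $t_{M+1}$ and increment $M$ and $r$ by one.

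\emph{Main obstacle.} The only substantive point is the global disjointness claim in Step~3, namely that $V$ occurs as an open-and-closed component of $Z(f_1,\dots,f_r)$ in $\A^M$ rather than only locally. This reduces to the $f_i$'s forming a regular sequence at every point of $V$, which follows directly from the trivialisation of the conormal sheaf, Nakayama's lemma, and the equality between the number of generators $r$ and the codimension. Everything else, namely the splitting of conormal exact sequences over an affine base, the recognition that $T_V$ is trivial, lifting sections from $V$ to $\A^M$, and homogenisation, is routine.
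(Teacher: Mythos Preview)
Your proof is correct and follows essentially the same approach as the paper's own proof, which is a three-sentence sketch: the tangent bundle of the total space $N_{Y/\A^N}$ is trivial, hence its normal bundle in any affine embedding is stably trivial, hence after stabilising it is a disjoint component of a complete intersection. Your Steps~1--4 unpack exactly this outline with full detail, including the regular-sequence argument and the homogenisation, neither of which the paper makes explicit.
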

\begin{proof}
The target bundle of the scheme $N_{Y/\A^N}$ is trivial. Hence its normal bundle is stable trivial. So $N_{Y/\A^n}$ is a disjoint component of a complete intersection in some affine space. 
\end{proof}

\section{Appendix C: equivalences of framed corr.}\label{sect:FrEquiv}

\begin{lemma}\label{lm:Freq1}
For any $n\in \mathbb Z_{\geq 0}$, an affine smooth $Y$, and an open $U\subset Y$, there are $\A^1$-equivalences 
$Fr^{*}_n(Y/U\wT{l})\to Fr^\text{nr}_n(Y/U\wT{l}), *\in \{\text{Nis,1st}\}$.
\end{lemma}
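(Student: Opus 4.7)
The strategy is to apply the lifting criterion of Proposition \ref{prop:LiftCriteria}: a morphism $e\colon F\to G$ of presheaves on $\Sm_k$, with $F$ and $G$ satisfying closed gluing, is an $\A^1$-Nisnevich equivalence provided $F(X)\to F(X_0)\times_{G(X_0)} G(X)$ is surjective for every closed embedding of affine schemes $X_0\hookrightarrow X$. Since $Fr^{\mathrm{Nis}}_n\to Fr^{\mathrm{nr}}_n$ factors as $Fr^{\mathrm{Nis}}_n\to Fr^{\mathrm{1th}}_n\to Fr^{\mathrm{nr}}_n$ according to \eqref{eq:idZarHensnr}, by two-out-of-three it is enough to verify the criterion for each of these two edges. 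Closed gluing for all three presheaves follows directly from the definitions: the support data, maps to $Y$, étale neighborhoods, and trivializations all glue along closed covers of affine schemes.

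For the map $Fr^{\mathrm{1th}}_n\to Fr^{\mathrm{nr}}_n$, suppose we are given $(Z,W,\tau,\beta)\in Fr^{\mathrm{nr}}_n(X)$ and a compatible lift $(Z|_{X_0},(\phi_0,\tilde\beta_0))\in Fr^{\mathrm{1th}}_n(X_0)$. The trivialization $\tau\colon I(W)/I^2(Z)\simeq \mathcal O^n(W)$ selects $n$ elements of $I(W)/I^2(Z)\subset\mathcal O(Z_2)$, which lift along the surjection $\mathcal O(Z_2)\twoheadrightarrow\mathcal O(Z_{2,0})$ compatibly with $\phi_0$ to define $\phi\colon Z_2\to\A^n$ with $Z(\phi)\cap Z_2=W$. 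The key observation for the $Y$-component is that $W\hookrightarrow Z_2$ is a square-zero thickening, since $I(W)^2\subset I(Z)^2=I^2(Z)$ in $\mathcal O(\A^n_X)$; hence formal smoothness of $Y$ allows us to extend $\beta\colon W\to Y$ to $\tilde\beta\colon Z_2\to Y$, and a relative formal-smoothness argument (applying smoothness to the square-zero thickening $Z_{2,0}\cup_{W_0}W\hookrightarrow Z_2$) yields the extension compatibly with $\tilde\beta_0$. The support condition $Z=Z_2\times_{\A^{n+l}\times Y}(0\times(Y\setminus U))$ is automatic from $\tilde\beta|_W=\beta$ and $Z(\phi)\cap Z_2=W$.

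For the map $Fr^{\mathrm{Nis}}_n\to Fr^{\mathrm{1th}}_n$, given $(Z,\alpha)\in Fr^{\mathrm{1th}}_n(X)$ with $\alpha\colon Z_2\to\A^{n+l}\times Y$ and a lift $(Z|_{X_0},V_0,\alpha_0)$, first extend the $\A^{n+l}$-component of $\alpha$ to a morphism $\A^n_X\to\A^{n+l}$ via the surjection $\mathcal O(\A^n_X)\twoheadrightarrow\mathcal O(Z_2)$, compatibly with $\alpha_0$ by the extension statement of Lemma \ref{lm:lift}. For the $Y$-part, fix a closed embedding $Y\hookrightarrow\A^N$ cut out by $y_1,\dots,y_r$, extend the composite $Z_2\to Y\hookrightarrow\A^N$ to $\A^n_X\to\A^N$, and take $V$ to be the vanishing locus of $(y_i\circ\mathrm{ext})$ intersected with the open where the Jacobian of these $r$ functions has maximal rank. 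Smoothness of $Y$ along $\beta(W)$ guarantees this Jacobian condition in a neighborhood of $Z$, so $V\to\A^n_X$ is étale near $Z$ and carries a natural map to $Y$ extending the original $\alpha$ on $Z_2$.

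The main obstacle is matching the constructed $V$ with the given $V_0$ over $X_0$: the two étale neighborhoods of $Z|_{X_0}$ need not coincide, and only the equivalence class of the correspondence is determined. This is handled by invoking the equivalence relation built into $Fr^{\mathrm{Nis}}_n$: any two étale neighborhoods of a common closed subscheme of an affine admit a common étale refinement, and by replacing $V$ and $V_0$ with such a refinement (which continues to carry the same map to $\A^{n+l}\times Y$ by the uniqueness of the formal-smooth extension from $Z_2$), one arrives at representatives agreeing over $X_0$, producing the required element of $Fr^{\mathrm{Nis}}_n(X)$. Combined with the two-out-of-three argument, Proposition \ref{prop:LiftCriteria} then delivers the claimed $\A^1$-Nisnevich equivalences for both $*\in\{\mathrm{Nis},\mathrm{1th}\}$.
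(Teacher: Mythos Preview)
Your overall strategy---applying Proposition~\ref{prop:LiftCriteria} and checking closed gluing plus the lifting property---matches the paper, and your argument for $Fr^{\mathrm{1th}}_n\to Fr^{\mathrm{nr}}_n$ via formal smoothness of $Y$ along the square-zero thickening $W\hookrightarrow Z_2$ is correct and close in spirit to the paper's use of Lemma~\ref{lm:HensLiftSmooth}. The factorization $Fr^{\mathrm{Nis}}_n\to Fr^{\mathrm{1th}}_n\to Fr^{\mathrm{nr}}_n$ is a reasonable organizational choice; the paper instead proves both maps to $Fr^{\mathrm{nr}}_n$ directly.

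However, the argument for $Fr^{\mathrm{Nis}}_n\to Fr^{\mathrm{1th}}_n$ has a genuine gap. In the last paragraph you claim that after passing to a common \'etale refinement of $V|_{X_0}$ and $V_0$, the two maps to $\A^{n+l}\times Y$ agree ``by the uniqueness of the formal-smooth extension from $Z_2$.'' This is false: formal smoothness guarantees existence of extensions along nilpotent thickenings, not uniqueness. Two maps $V'\to\A^{n+l}\times Y$ that agree on $Z_{2,0}$ can differ by anything vanishing on $Z_{2,0}$, and such maps define genuinely different elements of $Fr^{\mathrm{Nis}}_n(X_0)$, since the equivalence relation there identifies neighborhoods but not distinct morphisms $\alpha$. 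So your constructed lift need not restrict over $X_0$ to the given class $[(Z_0,V_0,\alpha_0)]$, and the lifting property is not established.

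The paper avoids this by building the compatibility with $(V_0,\alpha_0)$ into the construction rather than arguing it a posteriori: it applies Lemma~\ref{lm:HensLiftSmooth} to produce an \'etale neighborhood $V\to\A^n_X$ of the amalgam $W\amalg_{W\times_X X_0}V_0$ together with a map $g'\colon V\to\A^l\times Y$ lifting $g\amalg g_0$ simultaneously, and then chooses $\phi$ on $V$ compatibly with $\tau$ and $\phi_0$. This guarantees by construction that $(Z,V,\phi,g')|_{X_0}$ represents the same class as $(Z_0,V_0,\alpha_0)$.
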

\begin{proof}
Both equivalences follows from the criteria given in proposition \ref{prop:LiftCriteria} (used in the proof of \cite[corollary 2.2.21]{EHKSY-infloopsp}).
It is not difficult to see that all of the presheaves satisfy the closed glueing, so to get the claim we need to prove the lifting property with respect to closed embeddings of affines. 

Let $c=(Z,W,\tau,g)\in Fr^{nr}_n(X,Y/U\wT{l})$,
and let $(Z_0,V_0,\phi_0,g_0)\in Fr_{n}(X_0,Y/U\wT{l})$ is the lift of the restriction of $c$ to an element in $Fr^{nr}_n(X_0,Y/U\wT{l})$.
It follows from lemma \ref{lm:HensLiftSmooth} that
there is an etale neighbourhood $V\to \A^n_X$ of $W\amalg_{W\times_X X_0} V_0$ 
with the lift $g^\prime\colon V\to \A^l\times Y$ of $g\amalg g_0\colon W\amalg_{W\times_X X_0} V_0\to \A^l\times Y$.
Let $\phi$ be a regular map $V\to \A^n$ such that $\phi\big|_{W}=0$, the restriction $\phi\big|_{ Z(I^2(W)) }$ is defined by the trivialisation $\tau$, and $\phi\big|_{V_0}=\phi_0$.
Now $(Z,V,\phi,g)\in Fr(X,Y/U\wT{l})$ is the required lift of $c$.

Let $c=(Z,W,\tau,g)\in Fr^{nr}_n(X,Y/U\wT{l})$. 
Let $(Z_0,\phi_0,g_0)\in Fr^{1th}_{n}(X_0,Y/U\wT{l})$ is the lift of the restriction of $c$ to an element in $Fr^{nw}_n(X_0,Y/U\wT{l})$.
Similar as above by lemma \ref{lm:HensLiftSmooth}
there is an etale neighbourhood $V\to \A^n_X$ of $Z$ 
with the lift $g^\prime\colon V\to \A^l\times Y$ of $g\amalg g_0\colon W\amalg_{W\times_X X_0} (Z(I^2(Z))\times_X X_0) \to \A^l\times Y$, and a regular function $\phi$ on $V$ that is a lift of $\phi_0$ and is defined by $\tau$ on $Z(I^2(W))$.
Now the image of $(Z,V,\phi,g)$ under the map $Fr_n(X,Y/U\wT{l})\to Fr^{1th}_n(X,Y/U\wT{l})$ gives the required lift.
\end{proof}
The arguments in the proof above gives are enough itself for all equivalences of proposition \ref{prop:Freq} in the case of $U=\emptyset$, $l=0$.
To get the proof in the general case we need two extra definitions.

\begin{definition}[modified framed corr. $Fr^{\prime}$]\label{def:frcor}
For $Y\in \Sm_S$, and an open $U\subset Y$,
$Fr_n^\prime(Y/U\wT{l})$ is a pointed sheaf of sets with the sections
$Fr_n^\prime(X,Y/U\wT{l})$ for $X\in \Sch_S$
given by the equivalence classes of the data
$(Z,W,V, \phi,\psi, g)$,
where $V\to \A^n_X$ is an etale neighbourghood of a closed subscheme $W\subset \A^n_X$ over $X$, and 
$\alpha=(\phi,\psi,g)\colon V\to \A^n\times\A^l\times Y$ is a morphism of schemes such that 
$W = V\times_{\alpha,\A^{n+l},0} 0$, 
and $W\times_{g,Y,i} (Y\setminus U) = Z\amalg \hat Z$
$i\colon Y\setminus U\hookrightarrow Y$;  
all elements $(Z,V, \phi,\psi, g)$ with $Z=\emptyset$ are pointed;
the equivalence is up to a choice of the etale neighbourhood $V$.
\end{definition}
\begin{definition}[globally normally framed corr. $Fr^\text{g-nr}$]\label{def:normfrcor}
For $Y\in Sm_S$, and an open subscheme $U\subset Y$,
$Fr^\text{g-nr}_n(Y/U\wT{l})$ is a sheaf with the sections 
$Fr^\text{nr}(X,Y/U\wT{l})$ for $X\in \Sch_S$ given by the data
$(Z,W, \tau,\beta)$, where 
$Z\subset W\subset \A^n_X$ are closed,
$\tau\colon I(W)/(I^2(W))\simeq\cO^n(W)$,
$\beta\colon W\to \A^l\times Y$ such that $Z = W\times_{\A^l\times Y} (0\times (Y\setminus U))$;
the elements with $Z=\emptyset$ are pointed.
\end{definition}

\begin{lemma}\label{lm:Freq}
For any $Y\in \Sm_S$ and open $U\subset Y$  there are natural $\A^1$-Nis equivalences of motivic spaces 
$$
\begin{cases}
Fr^*_n(Y/U)\to Fr^\text{g-nr}_n(Y/U), *\in \{\text{pol},\text{Zar},\prime\}, n\in \mathbb Z_{\geq 0},\\
Fr^\text{g-nr}(Y/U)\to Fr^\text{nr}(Y/U).
\end{cases}$$
\end{lemma}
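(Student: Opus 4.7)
The plan is to apply the lifting criterion of Proposition \ref{prop:LiftCriteria} separately to each of the morphisms in the statement. For each morphism $e\colon F\to G$, I will verify (i) that both $F$ and $G$ satisfy closed gluing, and (ii) that $e$ has the lifting property with respect to closed embeddings of affine schemes. The closed gluing part is a direct verification from the definitions in each case: the data of a closed subscheme $W\subset \A^n_X$, a trivialization $\tau\colon I(W)/I^2(W)\simeq \cO^n(W)$ (resp.\ $\tau\colon I(W)/I^2(Z)\simeq \cO^n(W)$), and a morphism $\beta\colon W\to \A^l\times Y$ all glue along closed covers, as do the polynomial functions on $\A^n_X$ and the Zariski/etale neighbourhoods of $W$. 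So the bulk of the argument is in checking the lifting property in each of the four cases.

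For the three morphisms $Fr^*_n\to Fr^\text{g-nr}_n$ with $*\in\{\prime,\text{Zar},\text{pol}\}$ at fixed $n$, assume we are given $(Z,W,\tau,\beta)\in Fr^\text{g-nr}_n(X)$ together with a source lift over a closed affine $X_0\hookrightarrow X$. In the case $*=\prime$, I would extend the etale neighbourhood $V_0\to \A^n_{X_0}$ of $W_0$ to an etale neighbourhood $V\to \A^n_X$ of $W$ and lift $(\phi_0,\psi_0,g_0)$ to $(\phi,\psi,g)$ on $V$: the lift of $g$ uses Henselian lifting along $Y\to \mathrm{pt}$ (since $Y$ is smooth), exactly as in the proof of Lemma \ref{lm:Freq1}; the lift of $\phi$ is then chosen compatibly with $\tau$ and $\phi_0$, using Lemmas \ref{lm:lift} and \ref{lm:liftzero} from Appendix A. For $*=\text{Zar}$, the same argument works after replacing etale by Zariski neighbourhoods, which is possible by shrinking $V$. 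For $*=\text{pol}$, one additionally has to extend $\phi$ to a section of $\cO(\A^n_X)^n$ agreeing with $\phi_0$ on $\A^n_{X_0}$ and with $\tau$ on $W$, and such that $Z(\phi)=W$ up to the conditions encoded in the definition; this is obtained by the global generation/Serre surjectivity inputs (Lemma \ref{lm:StSurIdeals}, Corollary \ref{cor:SerresTh}) applied to $I(W)\subset \cO(\A^n_X)$ and to the twist bundles on $\PP^n_X$, using Lemma-remark \ref{rm:Resincdim} to embed in a sufficiently large ambient.

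For the colimit equivalence $Fr^\text{g-nr}\to Fr^\text{nr}$, the natural map sends $(Z,W,\tau,\beta)$ to the same data restricted from $W$ to $W':=W\cap Z(I(Z)^2)$, with the induced trivialization $\tau'\colon I(W')/I^2(Z)\simeq \cO^n(W')$. The lifting property again reduces to extending the first-order infinitesimal data on $X_0$ to $X$: starting from an nr-correspondence $(Z,W',\tau',\beta')$ on $X$ and a g-nr lift on $X_0$, I would produce a compatible closed $W\subset \A^n_X$ (possibly after stabilizing in $n$), a trivialization $\tau$ of $I(W)/I^2(W)$ extending both the structures, and a morphism $\beta\colon W\to \A^l\times Y$ extending $\beta'|_{W'}$ and the $X_0$-lift, by the same kind of Henselian lifting and Serre-surjectivity arguments; the freedom to stabilize in $n$ is exactly what allows us to pass from an arbitrary closed $W'\subset Z(I(Z)^2)$ to a global closed $W$ with trivialized conormal.

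The main obstacle I expect is the polynomial case $Fr^\text{pol}_n\to Fr^\text{g-nr}_n$ at \emph{fixed} $n$: controlling the polynomial lift $\phi$ on all of $\A^n_X$ so that $Z(\phi)=W$ (not merely locally near $W$), while simultaneously matching the prescribed trivialization $\tau$ on $W$ and the prescribed polynomial $\phi_0$ on $X_0$. The other three cases look essentially mechanical once the closed-gluing and Serre-surjectivity toolkit from Appendix A is in hand.
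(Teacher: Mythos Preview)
Your overall strategy---applying Proposition~\ref{prop:LiftCriteria} and checking the lifting property case by case---is exactly what the paper does. However, there are several places where your plan diverges from, or falls short of, the paper's argument.

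\textbf{Reduction to affine $Y$.} The paper begins by invoking Lemma~\ref{lm:Checkcov} (\v{C}ech descent for $Fr^*$) to reduce to the case where $Y$ is affine. You omit this, but it is needed: the Henselian lifting you appeal to (Lemma~\ref{lm:HensLiftSmooth}) gives the unrestricted lifting property only for affine (or quasi-projective) targets.

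\textbf{The cases $*\in\{\text{Zar},\prime\}$.} You propose to extend the \'etale neighbourhood $V_0\to\A^n_{X_0}$ to an \'etale neighbourhood $V\to\A^n_X$. In general \'etale morphisms do not extend along closed embeddings of the base, so this step as written is problematic. The paper avoids this entirely: for $Fr^\text{Zar}$, note that $\beta$ is already defined on $W$ (same as in $Fr^\text{g-nr}$), so nothing about $g$ needs extending; one only needs $\phi$ on a Zariski neighbourhood matching $\tau$ and $\phi_0$, which is pure Chinese remainder. The case $*=\prime$ is handled the same way in the paper. Your detour through Appendix~A is unnecessary here.

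\textbf{The polynomial case.} You correctly flag this as the delicate one, but your proposed fix via Serre surjectivity on $\PP^n_X$ does not address the real issue: a polynomial $\phi\in\cO(\A^n_X)^n$ matching $\tau$ on $Z(I^2(W))$ and $\phi_0$ on $\A^n_{X_0}$ will have $Z(\phi)=W\amalg\hat W$ with an unwanted extra piece $\hat W$, and no twisting argument removes it at level $n$. The paper's trick is to accept $\hat W$ and then kill it by adjoining one more coordinate: choose $\phi_{n+1}\in\cO(\A^{n+1}_X)$ with $\phi_{n+1}|_{Z(I^2(W))}=1$ and $\phi_{n+1}|_{\hat W}=0$, and pass to $(\phi,(t_{n+1}+1)\phi_{n+1}-1)\in Fr^\text{pol}_{n+1}$. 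So the polynomial case, like the last one, genuinely uses $\sigma$-stabilisation.

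\textbf{The map $Fr^\text{g-nr}\to Fr^\text{nr}$.} Your sketch (``same kind of Henselian lifting and Serre-surjectivity'') is too vague and misses the key geometric input. The paper proceeds as follows: from $\tau$ one gets $\phi'\colon Z(I^2(W))\to\A^n$; Lemma~\ref{lm:HensLiftSmooth} produces an \'etale neighbourhood $V$ of $W$ in $\A^n_X$ with a lift $(\tilde\phi,\tilde g)\colon V\to\A^n\times\A^l\times Y$; then Lemma~\ref{lm:EqforNeigh2} is used to realise a refinement of $V$ as a complete intersection $Z(f_1,\dots,f_m)\subset\A^{n+m}_X$ containing $Z(I^2(Z))\times 0$. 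Setting $W'=Z(\tilde\phi,f_1,\dots,f_m)$ yields the desired element of $Fr^\text{g-nr}_{n+m}$. The ingredient you are missing is precisely Lemma~\ref{lm:EqforNeigh2}: the ability to embed an \'etale neighbourhood as a global complete intersection after raising the ambient dimension.
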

\begin{proof}
All arrows in the lemma are equivalences by the criteria given by proposition \ref{prop:LiftCriteria}.
Let us skip the verification of the closed glueing, which is straightforward. 
So if we check the lifting property on affines for the morphism $Fr^\text{g-nr}(Y/U)\to Fr^\text{nr}(Y/U)$, this would implies the equivalence.
\newcommand{\AffSm}{\mathrm{AffSm}}
Using lemma \ref{lm:Checkcov} we can reduce the question to the case of affine $Y$.

Consider an element of $Fr^\mathrm{g-nr}(X_0,Y/U\wT{l})\times_{Fr^\mathrm{nr}(X,Y/U\wT{l})}Fr^\mathrm{nr}(X,Y/U\wT{l})$. 
It is a set $(Z,W,\tau,g)$ such that $Z,W\subset \A^n_X$ are closed, 
\begin{multline*}
W\times (X-X_0)\subset Z(I^2(Z))\times_X (X-X_0), 
\tau \colon I(W)/(I(W_2)) \simeq \mathcal O(W)^n, \\
g\colon W\to \A^l\times Y, Z=W\times_{g,\A^l\times Y, 0\times i} 0\times (Y\setminus U),
\end{multline*}
where $W_2=W\times_X X_0 \cup Z(I^2(Z))$.
The trivialisation $\tau$ defines the regular map $\phi^\prime\colon  Z(I^2(W))\to \A^n$ such that $Z(I^2(W))\times_{\A^n} 0=W$.
By lemma \ref{lm:HensLiftSmooth}
there is an etale neighbourhood $V$ of $W$ in $\A^n_X$ and a map $(\tilde \phi,\tilde g)\colon V\to \A^l\times Y$ that is a lift of $\phi^\prime$ and $g$.
By lemma \ref{lm:EqforNeigh2} there are regular functions $f_i$ on $\A^n_X\times \A^m$, $i=1\dots m$, such that 
$Z(I^2(Z))\times 0\subset Z(f_1,\dots f_m)\subset \A^n_X\times \A^m$, $f_i\big|_{Z(I^2(Z\times 0)}=t_{n+i}$, where $t_{i}$ denotes coordinates on $\A^{n+m}_X$, and
$(Z(f_1,\dots f_m),0\times Z)\to (\A^n_X,Z)$ is a refinement of the neighbourhood $(V,Z)\to (\A^n_X,Z)$.
Let $W^\prime =Z(\phi^\prime,f_1,\dots f_m)\subset \A^{n+m}_X$, and $\tau^\prime\colon I(W^\prime)/I^2(W^\prime)\simeq \cO(W^\prime)^{n+m} $ is defined by the regular map $(\phi^\prime,f_1,\dots f_m)$.
Then $(0\times Z, W^\prime, \tau^\prime, (\psi^\prime,g^\prime)\big|_{W^\prime})\in Fr^\text{g-nr}_{n+m}(X,Y)$ is the required lift of $\sigma^m(Z,W,\tau,\psi,g)\in Fr^{nr}_{n+m}(X,Y/U\wT{l})$. 

The lifting property with respect to closed embeddings of affines for the arrows 
$Fr^*_n(Y/U)\to Fr^\text{g-nr}_n(Y/U)$
$*\in \{\text{Zar}, \prime\}$
follows immediately from the 
Chinese remainder theorem.

Finally, we need to check the lifting property for $Fr^{pol}_n(Y/U)\to Fr^\text{g-nr}_n(Y/U)$.
Let $X_0\to X$ be a closed embedding of affine schemes. 
Consider an element of $Fr^\mathrm{pol}(X_0,Y/U\wT{l})\times_{Fr^\mathrm{pol}(X,Y/U\wT{l})}Fr^\mathrm{g-nr}(X,Y/U\wT{l})$. 
It is given by a pair $c=(Z,W,\tau,\beta)\in Fr^\text{g-nr}_n(X,Y/U)$, $c_0=(\phi^0,\beta)\in Fr^\text{pol}_n(X_0,Y/U)$ such that $c_0$ is a lift of $c\big|_X$.
By Chinese remainder theorem there is a set of functions $\phi=(\phi_i)_{i=1\dots n}$, on $\A^n_X$ such that $\phi_i\big|_{\A^n\times X_0}=\phi^0_i$, and $\phi_i\big|_{Z(I^2(W))}$ are agreed with $\tau$.
Then $Z(\phi)=W\amalg \hat W$.
Let $\phi_{n+1}\in \cO(\A^{n+1}_X)$ be such that $\phi_{n+1}\big|_{Z(I^2(W))}=1$, $\phi_{n+1}\big|_{\hat W}=0$.
Now the correspondence
$(\phi,(t_{n+1}+1)\phi_{n+1}-1)\in Fr^\text{pol}_{n+1}(X,Y)$ is the required lift of $c$.
\end{proof}

\section{Appendix D: Lifting properties of smooth morphisms}
\begin{lemma}\label{lm:HensLiftSmooth}
Let $Y$ be a smooth 
over a base $S$.
Let $Z\hookrightarrow X$ be an affine henselian pair with $X$ being regular, and $g\colon Z\to Y$ be a morphism.
Then there is a lift of $g$ to a morphism $g^\prime\colon X\to Y$.
If in addition $Y$ is affine (and more generally quasi-projective) then $Y\to S$ satisfies the lifting property with respect to any affine henselian pair $Z\to X$. 
\end{lemma}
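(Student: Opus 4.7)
The plan is to reduce the lifting problem to the section-lifting problem for a smooth morphism over the henselian pair and then apply the defining lifting property of henselian pairs for étale morphisms. Concretely, form the base change $Y_X := Y \times_S X \to X$, which is smooth over $X$, and observe that an $S$-morphism $g'\colon X \to Y$ extending $g$ is the same data as a section of $Y_X \to X$ extending the section $\tilde g \colon Z \to Y_Z := Y \times_S Z \hookrightarrow Y_X$ induced by $g$ and the closed embedding $Z \hookrightarrow X$. It thus suffices to prove that sections of smooth $X$-schemes over $Z$ extend to sections over $X$ in the two stated cases.

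For the quasi-projective case, I first use an ample line bundle $L$ on $Y$ to find a section $s \in \Gamma(Y, L^{\otimes n})$ for $n \gg 0$ whose non-vanishing locus $Y_s \subset Y$ is affine and contains the quasi-compact image $g(Z)$. Replacing $Y$ by $Y_s$ I may assume $Y$ is affine, hence $Y_X \to X$ is smooth affine. By the local structure theorem for smooth morphisms, $Y_X$ is covered by Zariski opens $V_\alpha$ equipped with étale morphisms $p_\alpha \colon V_\alpha \to \A^d_X$ (with $d$ the relative dimension). After a routine shrinking argument I reduce to the case where $\tilde g(Z)$ is contained in a single such chart $V$. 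Then $Z \to V \to \A^d_X$ lifts coordinatewise to a morphism $\tilde\phi \colon X \to \A^d_X$ (trivial, since $X$ is affine); the pullback $V \times_{\A^d_X, \tilde\phi} X \to X$ is étale with a section over $Z$ supplied by $\tilde g$; and this section extends to $X$ by the defining lifting property of the henselian pair for étale morphisms. Composition with the projection to $V \subset Y_X$ yields the desired section.

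The main obstacle is the shrinking/patching step reducing to a single étale chart, which in the affine (hence quasi-projective) case is handled by iteratively passing to smaller affine opens around $g(Z)$ and using henselianness on overlaps. For the first assertion of the lemma, where $Y$ is only assumed smooth and the ample-bundle reduction is unavailable, one instead invokes a stronger approximation theorem --- for instance Popescu's desingularization or Elkik's theorem on smooth algebras over henselian pairs --- and it is precisely in the hypothesis of such a theorem that the regularity of $X$ enters.
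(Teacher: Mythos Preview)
Your overall strategy---pass to a section-lifting problem for $Y_X=Y\times_S X\to X$ and then manufacture an \'etale $X$-scheme with a section over $Z$---is the same as the paper's, but two of your reduction steps do not go through as written.

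The reduction from quasi-projective $Y$ to affine $Y$ by choosing an affine open $Y_s\supset g(Z)$ is false in general: the image of an affine scheme under a morphism to a quasi-projective scheme need not lie in any affine open. For instance, take $(A,I)$ to be the henselization of $k[x,y]$ along $(y)$, so that $Z\simeq\A^1_k$, and let $g\colon\A^1_k\to\PP^1_k$ be $x\mapsto[x^2-1:x]$; then $g(0)=[1:0]$ and $g(1)=[0:1]$, so no affine open of $\PP^1$ contains $g(Z)$. The paper handles both this reduction and the case of arbitrary smooth $Y$ uniformly via the Jouanolou--Thomason device: one replaces $Y_X$ by an affine-bundle torsor $T'\to Y_X$ with $T'$ affine, lifts the $Z$-section to $T'$ (possible because an affine bundle over the affine scheme $Z$ has a section), and is reduced to an affine target. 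Such a torsor exists whenever $Y_X$ carries an ample family of line bundles; for quasi-projective $Y$ this comes from the ample bundle on $Y$, and for arbitrary smooth $Y$ it comes from the regularity of $X$ (a regular separated noetherian scheme has an ample family, and this is inherited by smooth $X$-schemes). That is where the regularity hypothesis actually enters in the paper---not through Popescu or Elkik, whose relevance you do not explain.

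In the affine case, the ``routine shrinking argument'' reducing to a single standard-smooth chart $V\to\A^d_X$ containing $\tilde g(Z)$ is again the substantive point rather than a formality: a finite cover of $Y_X$ by standard-smooth principal opens need not admit a member containing the whole section, and ``henselianness on overlaps'' has no evident meaning here. The paper bypasses this by a direct global construction: after replacing $Y$ by a vector bundle over it so that $T_Y$ is free with frame $r_1,\dots,r_d$, one chooses global functions $f_1,\dots,f_d\in\cO(Y_X)$ vanishing on $v(Z)$ and agreeing with the $r_i$ on its first-order thickening; an open piece of $Z(f_1,\dots,f_d)$ is then \'etale over $X$ with fibre $v(Z)$ over $Z$, and the henselian property supplies the section $X\to V\subset Y_X$.
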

\begin{proof}
Firstly using Jouanolou's trick (Tomason's theorem) we reduce the question about the general $Y$ to the case of affine $Y$.
Namely, for an arbitrary $Y$ we consider the morphism $T=Y\times X\to X$ as a smooth scheme over an affine scheme $X$. Then by Jouanolou's trick \cite[Proposition 4.3, corollary 4.6]{Asok-JTt} there is an affine bundle $T^\prime\to T$ with affine $T^\prime$. Now Since $Z$ is affine $T^\prime\times_T Z\to Z$ is vector bundle, and there is a lift $Z\to T^\prime$ of the morphism $Z\to T$ defined by the morphism $Z\to Y$. Finally applying the lemma to the affine smooth $X$-scheme $T^\prime$ we get the lift $X\to T^\prime$ of the morphism $Z\to T^\prime$, and thus the composition $X\to T^\prime\to T\to Y$ gives the required lift of the morphism $Z\to Y$.

Assume $Y$ is affine and smooth over $S$. Let $\check{N}_Y\to Y$ be a vector bundle such that $\check{N}_Y\oplus N_Y$ is trivial, where $N_Y$ is relative normal bundle over $S$.
Then it is enough to prove the claim for $\check{N}_Y$ instead of $Y$.
So we can assume that normal (and tangent) bundle $N_Y$ of $Y$ is trivial, let $r=(r_1m\dots r_d)\colon \mathcal O(Y)\simeq N_Y$, $d=\dim Y$. 

By assumptions the morphism $Y\times_S X\to X$ is affine and admits a section $v\colon Z\to  Y\times_S X$ over $Z$.
We need to find a lift to a section $X\to  Y\times_S X$.
Consider regular functions $f_i$ on $Y\times_S X$ such that
$f_i\big|_{v(Z)}=0$, $f_i\big|_{Z_2}=r_i$, where $Z_2=Z(I^2(v(Z)))\times_X Z$.
Let $p\colon \overline V=Z(f_1,\dots f_d)\to X$ be the canonical projection.
Let $C\subset \overline V$ be the maximal closed subset such that fibres of ${\overline V}^2_X$ over $C$ are of dimension at least one. 
Let $V^\prime=\overline V-(C\cup (p^{-1}(Z)-v(Z)))$.
Then $V^\prime\to X$ is quasi-finite, and $V^\prime\times_X Z=v(Z)$.
It follows form the condition on $f_i$ that $V^\prime$ is smooth at $v(Z)$ over $S$.
Let $V\subset V^\prime$ be an open neighbourhood of $v(Z)$ that is smooth over $S$.
Since $V\times_X Z=v(Z)$ it follows that $e\colon V^{\prime\prime}\to X$ is unramified. 
Hence $e$ etale, and so there is a lift $X\to V$.
\end{proof}
Combining this with the notion of formally smoothness we get the following
\begin{corollary}
For a locally finite type $S$-scheme $Y$ the following are equivalent:
\item{1)} $Y$ is smooth;
\item{2)} $Y$ is formally smooth, i.e. it satisfies the lifting property with respect to a closed embeddings of affine schemes $Z\to X$ with $I^2(Z)=0$;
\item{3)} $Y$ satisfies the lifting property with respect to a henselian affine pairs $Z\to X$ with $X$ being regular.

If $Y$ is affine (or quasi-projective) in addition then the above conditions are equivalent to
\item{4)} $Y$ satisfies the lifting property with respect to a henselian affine pairs.
\end{corollary}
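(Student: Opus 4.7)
The plan is to run the cycle $2) \Rightarrow 1) \Rightarrow 3) \Rightarrow 2)$ and then close the loop with $4)$ in the affine/quasi-projective case. The equivalence $1) \Leftrightarrow 2)$ is the classical EGA characterization of smoothness via the infinitesimal lifting property for locally of finite type morphisms (EGA IV, 17.1.4), so I would just cite it. The implication $1) \Rightarrow 3)$ is exactly the content of Lemma \ref{lm:HensLiftSmooth}, and the second assertion of the same lemma gives $1) \Rightarrow 4)$ when $Y$ is affine or quasi-projective. The implication $4) \Rightarrow 3)$ is tautological since the class of henselian affine pairs with $X$ regular is a subclass of all henselian affine pairs.

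The only non-trivial step is therefore $3) \Rightarrow 2)$. Given an affine pair $(X,Z)$ with $I(Z)^2 = 0$ and a morphism $g\colon Z\to Y$, the idea is to replace $X$ by a regular henselian envelope. Since $Z$ is affine, choose any closed embedding $Z\hookrightarrow V$ with $V$ a smooth affine $S$-scheme (for instance $V=\A^N_S$). Because $\mathcal O(X)\twoheadrightarrow \mathcal O(Z)$, the morphism $Z\to V$ lifts to a (not necessarily closed) morphism $X\to V$. Form the henselisation $V^h$ of $V$ along $Z$; as a filtered limit of étale neighbourhoods of $Z$ in the regular scheme $V$, the scheme $V^h$ is regular, and $(V^h,Z)$ is a henselian affine pair. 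Hypothesis $3)$ then produces an extension $g^h\colon V^h\to Y$ of $g$.

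To conclude, I use that $(X,Z)$ is itself a henselian pair — this is automatic because $I(Z)$ is nilpotent (in fact square-zero). Consequently the given morphism $X\to V$, which restricts to the canonical inclusion $Z\to Z\subset V^h$, lifts uniquely to a morphism $X\to V^h$: any étale neighbourhood $V'\to V$ with $V'\times_V Z\simeq Z$ admits a unique section over $(X,Z)$ by the henselian lifting property, and passing to the limit yields $X\to V^h$. Composing with $g^h$ gives the desired extension $X\to Y$ of $g$, establishing formal smoothness $2)$.

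The main obstacle I anticipate is the regularity of $V^h$, which tacitly requires $\A^N_S$ (or some chosen smooth $V$) to be regular along $Z$; this is fine once the base $S$ is regular, and otherwise one can Zariski-localize on $X$ (formal smoothness being Zariski local) and embed into a regular ambient scheme locally, since the target $Y$ is of locally finite type and the question is really about local behaviour near the image of $g$. Everything else is bookkeeping: the two implications out of $1)$ reuse Lemma \ref{lm:HensLiftSmooth} verbatim, and the classical EGA step $2)\Rightarrow 1)$ does not need to be reproved. Under the additional affine/quasi-projective hypothesis on $Y$, adding $4)$ to the equivalence then follows from $1)\Rightarrow 4)$ (Lemma \ref{lm:HensLiftSmooth}) and the trivial $4)\Rightarrow 3)$.
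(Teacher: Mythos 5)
Your cycle $2)\Rightarrow 1)\Rightarrow 3)\Rightarrow 2)$, together with $1)\Rightarrow 4)\Rightarrow 3)$ in the affine/quasi-projective case, is the natural way to organize this, and it agrees with what the paper's one-line justification (``combining this with the notion of formal smoothness'') must have in mind. The paper gives no written proof of the corollary, so your filling-in of $3)\Rightarrow 2)$ via henselization of an ambient affine space is the genuine mathematical content here, and the mechanism you describe is correct: embed $Z$ in an affine space $V$, lift $Z\to V$ to $X\to V$ across the square-zero extension, take the henselization $V^h$ of $V$ along $Z$, apply hypothesis $3)$ to the regular henselian affine pair $(V^h,Z)$ to get $g^h$, and then use that $(X,Z)$ is a henselian pair (nilpotent ideal) to factor $X\to V$ uniquely through each \'etale neighbourhood and hence through $V^h$, finally composing with $g^h$.

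The one thing I would push back on is your treatment of the regularity caveat. You correctly observe that the argument needs $V=\A^N_S$ (or a local substitute) to be regular, which forces $S$ to be regular along the image of $Z$, and you propose to ``Zariski-localize on $X$ and embed into a regular ambient scheme locally'' otherwise. That fallback does not work in general: over a non-regular base no $S$-scheme supported over the singular locus can be regular, so condition $3)$ becomes vacuous there while condition $2)$ does not, and the implication $3)\Rightarrow 2)$ simply fails (e.g.\ take $S=\Spec k[\epsilon]/\epsilon^2$). So the regularity of $S$ is not a removable technicality but a genuine hypothesis that the corollary tacitly carries. In the paper's contexts ($S$ a perfect field or a regular noetherian ring) this is automatic and your argument goes through verbatim; you should simply state the hypothesis rather than try to engineer around it. A minor optional simplification: for $3)\Rightarrow 2)$ it suffices, by standard limit arguments for locally finitely presented $Y$, to treat $Z$ Artin local over a point of $S$, and then one can take $R=\widehat{\cO_{S,s}}[[x_1,\dots,x_n]]$ as the ambient regular ring, which is henselian outright (being complete local) and avoids invoking henselization of an infinite-dimensional affine space; this again uses $\cO_{S,s}$ regular.
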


\end{document}